\begin{document}
 \title{Corrector homogenization estimates for a non-stationary
 	Stokes-Nernst-Planck-Poisson system in perforated domains
 	\thanks{Received date, and accepted date (The correct dates will be entered by the editor).}}


          \author{Vo Anh Khoa\thanks{Author for correspondence. Mathematics and Computer Science Division,
          		Gran Sasso Science Institute, L'Aquila, Italy, (khoa.vo@gssi.infn.it,
          		vakhoa.hcmus@gmail.com).}
          \and Adrian Muntean\thanks{Department of Mathematics and Computer Science, Karlstad University,
          	Sweden, (adrian.muntean@kau.se).}}

         \pagestyle{myheadings} \markboth{Correctors for a Stokes-Nernst-Planck-Poisson system}{V.A. Khoa and A. Muntean} \maketitle

          \begin{abstract}
               We consider a non-stationary Stokes-Nernst-Planck-Poisson
               system posed in perforated domains. Our aim is to justify rigorously the homogenization limit for the
               upscaled system derived by means of two-scale convergence in \cite{RMK12}. 
               In other words, we wish to obtain the so-called corrector homogenization
               estimates that specify the error obtained when upscaling the microscopic
               equations. Essentially, we control in terms of suitable norms differences
               between the micro- and macro-concentrations and between the corresponding
               micro- and macro-concentration gradients. The major challenges
               that we face are the coupled flux structure of the
               system, the nonlinear drift terms and the presence of the microstructures.
               Employing various energy-like estimates, we discuss several scalings choices
               and boundary conditions.
          \end{abstract}
\begin{keywords}  Stokes-Nernst-Planck-Poisson system; Variable scalings; Two-scale convergence;  Perforated domain; Homogenization asymptotics; Corrector estimates.
\end{keywords}

 \begin{AMS} 35B27, 35C20, 35D30, 65M15
\end{AMS}

\section{Introduction}\label{intro}
Colloidal dynamics is a relevant research topic of interest from
both theoretical perspectives and modern industrial
applications. Relevant technological applications include oil recovery
and transport \cite{TM99}, drug-delivery design \cite{ML09}, motion of micro-organisms
in biological suspensions \cite{Earl06}, harvesting energy via solar cells \cite{Boda}, and also, sol-gel synthesis
\cite{BS90}. Typically, they all involve different phases of dispersed media (solid morphologies), which
resemble  at least remotely to homogeneous domains paved with arrays of contrasting microstructures that are distributed periodically. Mathematically, the interplay between populations of  colloidal particles lead to work in the multiscale analysis of PDEs especially what concerns the Smoluchowski coagulation-fragmentation system and the Stokes-Nernst-Planck-Poisson system, which is our target here.

It is well known (cf. \cite{Ven89}, e.g.)  that
many particles in colloidal chemistry  are able to carry
electrical charges (positive or negative) and, in some circumstances,  they can be described using intensive quantities like the number density or ions concentration, say  $c_{\varepsilon}^{\pm}$. Following
\cite{EGJW98}, we consider such concentrations $c_{\varepsilon}^{\pm}$
of electrically charged colloidal particles to be involved as unknowns in the Nernst-Planck equations.
These equations model the diffusion, deposition, convection
and electrostatic interaction within a porous medium. The associated electrostatic
potential, called here $\Phi_{\varepsilon}$, is usually determined by a Poisson equation
linearly coupled with the densities of charged species, describing the electric
field formation inside the heterogeneous domain. Colloidal particles are always immersed in a background fluid. Here, we assume that the fluid velocity $v_{\varepsilon}$
fulfills a suitable variant of the Stokes equations.

It is the aim of this paper to explore mathematically the upscaling of such non-stationary Stokes-Nernst-Planck-Poisson
(SNPP) systems posed in a porous medium $\Omega^{\varepsilon}\subset\mathbb{R}^{d}$, where $\varepsilon\in\left(0,1\right)$ represents the scale
parameter relative to the perforation (pore sizes) of the domain. To be more precise, we wish to justify the homogenization asymptotics for a class of SNPP systems developed by the group of Prof. P. Knabner in Erlangen, Germany,  that fit well to the motion of charged colloidal particles through saturated soils.

As starting point of the discussion, we consider the following microscopic Stokes-Nernst-Planck-Poisson (SNPP) system:
\begin{align}
	& -\varepsilon^{2}\Delta v_{\varepsilon}+\nabla p_{\varepsilon}=-\varepsilon^{\beta}\left(c_{\varepsilon}^{+}-c_{\varepsilon}^{-}\right)\nabla\Phi_{\varepsilon}\quad\text{in}\quad Q_{T}^{\varepsilon}:=\left(0,T\right)\times\Omega^{\varepsilon}, \label{eq:haha1} \\
	& \nabla\cdot v_{\varepsilon}=0\quad \text{in}\quad Q_{T}^{\varepsilon},\\
	& v_{\varepsilon}=0\quad \text{on}\quad \left(0,T\right)\times\left(\Gamma^{\varepsilon}\cup\partial\Omega\right), \label{eq:haha-1-1}\\
	& -\varepsilon^{\alpha}\Delta\Phi_{\varepsilon}=c_{\varepsilon}^{+}-c_{\varepsilon}^{-}\quad \text{in}\quad Q_{T}^{\varepsilon},
	\label{eq:haha4}\\
	& \varepsilon^{\alpha}\nabla\Phi_{\varepsilon}\cdot\text{n}=0\quad\text{on}\quad \left(0,T\right)\times\partial\Omega,
	\label{eq:haha7}\\
	& \partial_{t}c_{\varepsilon}^{\pm}+\nabla\cdot\left(v_{\varepsilon}c_{\varepsilon}^{\pm}-\nabla c_{\varepsilon}^{\pm}\mp\varepsilon^{\gamma}c_{\varepsilon}^{\pm}\nabla\Phi_{\varepsilon}\right)=R_{\varepsilon}^{\pm}\left(c_{\varepsilon}^{+},c_{\varepsilon}^{-}\right)\quad \text{in}\quad Q_{T}^{\varepsilon},
	\label{eq:haha8}\\
	& -\left(v_{\varepsilon}c_{\varepsilon}^{\pm}-\nabla c_{\varepsilon}^{\pm}\mp\varepsilon^{\gamma}c_{\varepsilon}^{\pm}\nabla\Phi_{\varepsilon}\right)\cdot\text{n =0}\quad \text{on}\quad \left(0,T\right)\times\left(\Gamma^{\varepsilon}\cup\partial\Omega\right),\\
	& c_{\varepsilon}^{\pm}=c^{\pm,0}\quad \text{in}\quad \left\{ t=0\right\} \times\Omega^{\varepsilon}.\label{eq:haha10}
\end{align}

We refer to \eqref{eq:haha1}-\eqref{eq:haha10} as $\left(P^{\varepsilon}\right)$. The system \eqref{eq:haha1}-\eqref{eq:haha10} is endowed either with
\begin{align}
\varepsilon^{\alpha}\nabla\Phi_{\varepsilon}\cdot\text{n}=\varepsilon\sigma\quad \text{on}\quad \left(0,T\right)\times\Gamma_{N}^{\varepsilon},\label{eq:boundN}
\end{align}
or with
\begin{align}
\Phi_{\varepsilon}=\Phi_{D}\quad \text{on}\quad \left(0,T\right)\times\Gamma_{D}^{\varepsilon},\label{eq:boundD}
\end{align}

We deliberately use variable scaling parameters $\alpha,\beta,\gamma$ for the ratio of the magnitudes of differently incorporated physical processes to weigh the effect a certain heterogeneity (morphology) has on effective transport coefficients.

A few additional remarks are in order:  The background fluid (solvent) is assumed to be isothermal, incompressible and electrically neutral. The movement of this liquid at low Reynolds numbers decides the momentum equation behind our Stokes flow (see in \eqref{eq:haha1}-\eqref{eq:haha-1-1}). The Stokes equation further couples to the mass balance equations of the involved colloidal species as described by the  Nernst-Planck equations in \eqref{eq:haha8}-\eqref{eq:haha10}. The initial charged densities $c^{\pm,0}$ are present cf. \eqref{eq:haha10}. The
Poisson-type equation points out an induced electric field acting on the liquid as well as on the charges carried by the colloidal  species (see in \eqref{eq:haha4}-\eqref{eq:haha7}). The surface charge density $\sigma$ of the porous medium is prescribed as in \eqref{eq:boundN}.

Although it can in principle introduce a boundary layer potentially interacting with the homogenization asymptotics, the magnitude of the $\zeta$-potential $\Phi_{D}$ in \eqref{eq:boundD} does
not influence our theoretical results.  Here, it only indicates the degree
of electrostatic repulsion between charged colloidal particles within
a dispersion. In fact, experiments provide that colloids with high
$\zeta$-potential (i.e. $\Phi_{D}\gg1$ or $\Phi_{D}\ll-1$) are
electrically stabilized while with low $\zeta$-potential, they
tend to coagulate or flocculate rapidly (see e.g. \cite{GK99,NT-A04} for
a detailed calculation).

\begin{table}
	\begin{centering}
		\begin{tabular}{|c|c|}
			\hline 
			$v_{\varepsilon}:Q_{T}^{\varepsilon} \to \mathbb{R}$ & velocity\tabularnewline
			\hline 
			$p_{\varepsilon}:Q_{T}^{\varepsilon} \to \mathbb{R}$ & pressure\tabularnewline
			\hline 
			$\Phi_{\varepsilon}: Q_{T}^{\varepsilon} \to \mathbb{R}$ & electrostatic potential\tabularnewline
			\hline 
			$c_{\varepsilon}^{\pm}:Q_{T}^{\varepsilon} \to \mathbb{R}$ & number densities\tabularnewline
			\hline
			$c^{\pm,0}:\Omega^{\varepsilon} \to \mathbb{R}$ & initial charged densities\tabularnewline
			\hline 
			$\sigma\in\mathbb{R}$ & surface charge density\tabularnewline
			\hline 
			$\Phi_{D}\in\mathbb{R}$ & $\zeta$-potential\tabularnewline
			\hline 
			$R_{\varepsilon}^{\pm}:\mathbb{R}^{2}\to\mathbb{R}$ & reaction rates\tabularnewline
			\hline 
			$\alpha,\beta,\gamma\in\mathbb{R}$ & variable choices of scalings\tabularnewline
			\hline 
		\end{tabular}
		\par\end{centering}
	\caption{Physical unknowns and parameters arising in the microscopic problem $\left(P^{\varepsilon}\right)$.\label{tab:1-1}}
\end{table}

Specific scenarios for averaging Poisson-Nernst-Planck (PNP) systems as well as Stokes-Nernst-Planck-Poisson (SNPP) systems were discussed in a number of recent papers; see e.g. \cite{SB15,Schmuck11,Frank13,FK16,GM16,GM14}.
The SNPP-type models are more difficult to handle mathematically mostly because of the oscillations introduced by the presence of the Stokes flow.   The SNPP systems shown in \cite{RMK12,FRK11} are endowed with several
scaling choices to cover various types of SNPP systems including
Schmuck's work cf. \cite{Schmuck11} and the study of a stationary
and linearized SNPP system by Allaire et al. cf. \cite{AMP10}. As main
results, the global weak solvability of the respective models as well as their periodic
homogenization limit procedures were obtained. We refer to reader to the {\em lit. cit.} also for the  precise structure of
the associated effective transport tensor parameters and upscaled equations. It is worth also mentioning that sometimes, like e.g. in \cite{SB15,Schmuck11,Schmuck12}, a classification of the upscaling results is done depending on the choice of boundary conditions for the Poisson equation.

The main theme of this paper is the derivation of  corrector estimates
quantifying the convergence rate of the periodic homogenization limit process leading to upscaled SNPP systems. This should be seen as a quantitative check of the quality of the two-scale averaging procedure.
Getting grip on corrector estimates
is a needed step in designing convergent multiscale finite element
methods (see, e.g. \cite{Hou1999}) and can play an important role also in studying multiscale inverse problems.


Our main results
are reported in Theorem \ref{thm:main1-6} in and
Theorem \ref{thm:main2-6}.  Here both the Neumann and Dirichlet boundary data for the electrostatic potential are considered. The two types of boundary conditions for the electrostatic potential will lead to different structures of the upscaled systems, and hence, also the structure of the correctors will be different.  To obtain these
corrector estimates, we rely on the energy method combined with integral estimates for periodically oscillating functions as well as with  appropriate macroscopic reconstructions, regularity
results on limit and cell functions as well as the smoothness assumptions
for the microscopic boundaries and data. It is worth mentioning that the corrector
estimate for the closest model to ours, i.e. for the PNP
equations in \cite[Theorem 2.3]{Schmuck12}, reveals already a class of possible assumptions  on
the cell functions (taken in $W^{1,\infty}$) as well as on the smoothness of the interior and
exterior boundaries (taken in $C^{\infty}$). Also, we borrowed ideas from both linear elliptic theory \cite{ADN59} as well as from the techniques behind the previously obtained corrector estimates \cite{CP99,KM16,KM17,Khoa17} for periodically perforated media. Concerning the locally periodic case, we refer the reader to \cite{Tycho} and references cited therein or to Zhang et al. \cite{ZBXY17}. In the latter paper, the authors have studied the homogenization of a steady reaction-diffusion system in a chemical vapor infiltration (CVI) process and have also deduced the convergence rate for the homogenization limit.

The reader should bear in mind that our way of deriving  corrector estimates does not extend to the stochastic homogenization setting, but can cover, involving only minimal technical modifications, the locally periodic homogenization setting.

The corrector estimates we claim are the following:

	\textbf{Case 1:} If the electrostatic potential $\Phi_{\varepsilon}$
satisfies the homogeneous Neumann boundary condition, then it holds
\begin{tcolorbox}		
	\begin{align}
	& \left\Vert \tilde{\Phi}_{\varepsilon}-\tilde{\Phi}_{0}^{\varepsilon}\right\Vert _{L^{2}\left(\left(0,T\right)\times\Omega^{\varepsilon}\right)}+\left\Vert c_{\varepsilon}^{\pm}-c_{0}^{\pm,\varepsilon}\right\Vert _{L^{2}\left(\left(0,T\right)\times\Omega^{\varepsilon}\right)}\nonumber\\
	& +\left\Vert \nabla\left(\tilde{\Phi}_{\varepsilon}-\tilde{\Phi}_{1}^{\varepsilon}\right)\right\Vert _{\left[L^{2}\left(\left(0,T\right)\times\Omega^{\varepsilon}\right)\right]^{d}}\le C\max\left\{ \varepsilon^{\frac{1}{2}},\varepsilon^{\frac{\mu}{2}}\right\} ,\\
	& \left\Vert \nabla\left(c_{\varepsilon}^{\pm}-c_{1}^{\pm,\varepsilon}\right)\right\Vert _{\left[L^{2}\left(\left(0,T\right)\times\Omega^{\varepsilon}\right)\right]^{d}}\le C\max\left\{ \varepsilon^{\frac{1}{4}},\varepsilon^{\frac{\mu}{2}}\right\} ,\\
	& \left\Vert v_{\varepsilon}-\left|Y_{l}\right|^{-1}\mathbb{D}v_{0}^{\varepsilon}-\varepsilon\left|Y_{l}\right|^{-1}\mathbb{D}v_{1}^{\varepsilon}\right\Vert _{\left[L^{2}\left(\left(0,T\right)\times\Omega^{\varepsilon}\right)\right]^{d}}\nonumber\\
	& +\left\Vert p_{\varepsilon}-p_{0}\right\Vert _{L^{2}\left(\Omega\right)/\mathbb{R}}\le C\left(\max\left\{ \varepsilon^{\frac{1}{2}},\varepsilon^{\frac{\mu}{2}}\right\} +\varepsilon^{\frac{\lambda}{2}}+\varepsilon^{1-\frac{3\lambda}{2}}+\varepsilon^{\frac{1}{2}-\lambda}\right),
	\end{align}
	where $\mu \in \mathbb{R}_{+}$ and $\lambda \in (0,1)$.
	
\end{tcolorbox}

\textbf{Case 2:} If the electrostatic potential $\Phi_{\varepsilon}$
satisfies the homogeneous Dirichlet boundary condition, then it holds
\begin{tcolorbox}
\begin{align}
& \left\Vert \tilde{\Phi}_{\varepsilon}-\tilde{\Phi}_{0}^{\varepsilon}\right\Vert _{L^{2}\left(\left(0,T\right)\times\Omega^{\varepsilon}\right)}+\left\Vert c_{\varepsilon}^{\pm}-c_{0}^{\pm,\varepsilon}\right\Vert _{L^{2}\left(\left(0,T\right)\times\Omega^{\varepsilon}\right)}\nonumber \\
& +\left\Vert \nabla\left(\tilde{\Phi}_{\varepsilon}-\tilde{\Phi}_{0}^{\varepsilon}\right)\right\Vert _{\left[L^{2}\left(\left(0,T\right)\times\Omega^{\varepsilon}\right)\right]^{d}}+\left\Vert \nabla\left(c_{\varepsilon}^{\pm}-c_{1}^{\pm,\varepsilon}\right)\right\Vert _{\left[L^{2}\left(\left(0,T\right)\times\Omega^{\varepsilon}\right)\right]^{d}}\le C\max\left\{ \varepsilon^{\frac{1}{2}},\varepsilon^{\frac{\mu}{2}}\right\} ,\\
& \left\Vert v_{\varepsilon}-\left|Y_{l}\right|^{-1}\mathbb{D}v_{0}^{\varepsilon}-\varepsilon\left|Y_{l}\right|^{-1}\mathbb{D}v_{1}^{\varepsilon}\right\Vert _{\left[L^{2}\left(\left(0,T\right)\times\Omega^{\varepsilon}\right)\right]^{d}}\nonumber\\
& +\left\Vert p_{\varepsilon}-p_{0}\right\Vert _{L^{2}\left(\Omega\right)/\mathbb{R}}\le C\left(\max\left\{ \varepsilon^{\frac{1}{2}},\varepsilon^{\frac{\mu}{2}}\right\} +\varepsilon^{\frac{\lambda}{2}}+\varepsilon^{1-\frac{3\lambda}{2}}+\varepsilon^{\frac{1}{2}-\lambda}\right).
\end{align}
\end{tcolorbox}

The paper is organized as follows. In Section \ref{sec:Setting-of-the}, the geometry
of our perforated domains is introduced together with some notation and conventions.
The list of assumptions on the data is also reported here. In the second
part of the section, we present the classical concepts of the two-scale
convergence on domains and on surfaces and then provide the weak and strong formulations of all systems of PDEs mentioned in this framework (including the microscopic and
macroscopic evolution systems, the cell problems). Section \ref{mainsec} is devoted to the statement of our main results and to the corresponding proofs. The remarks from Section \ref{finalsec} conclude  the paper.


\section{Technical preliminaries\label{sec:Setting-of-the}}
\subsection{A geometrical interpretation of porous media\label{subsec:A-geometrical-interpretation}}

Let $\Omega$ be a bounded and open domain in $\mathbb{R}^{d}$ with
$\partial\Omega\in C^{0,1}$. Without loss of generality, we assume  $\Omega$ to be the parallelepiped $\left(0,a_{1}\right)\times...\times\left(0,a_{d}\right)$
for $a_{i}>0,i\in\left\{ 1,...,d\right\} $.

Let $Y$ be the unit cell defined by
\[
Y:=\left\{ \sum_{i=1}^{d}\lambda_{i}\vec{e}_{i}:0<\lambda_{i}<1\right\} ,
\]
where $\vec{e}_{i}$ denotes the $i$th unit vector in $\mathbb{R}^{d}$.
We suppose that $Y$ consists of two open sets $Y_{l}$ and $Y_{s}$
which respectively represent the liquid part (the pore) and the solid part (the skeleton) such
that $\bar{Y}_{l}\cup\bar{Y}_{s}=\bar{Y}$ and $Y_{l}\cap Y_{s}=\emptyset,$ while $\bar{Y}_{l}\cap\bar{Y}_{s}=\Gamma$ 
has a non-zero $\left(d-1\right)$-dimensional Hausdorff measure. Additionally,
we do not allow the solid part $Y_{s}$ to touch the outer boundary
$\partial Y$ of the unit cell. As a consequence, the fluid part is
connected (see Figure \ref{fig:1-666}).

Let $Z\subset\mathbb{R}^{d}$ be a hypercube. For $X\subset Z$ we
denote by $X^{k}$ the shifted subset
\[
X^{k}:=X+\sum_{i=1}^{d}k_{i}\vec{e}_{i},
\]
where $k=\left(k_{1},...,k_{d}\right)\in\mathbb{Z}^{d}$ is a vector
of indices.

Let $\varepsilon>0$ be a given scale factor. We assume that $\Omega$
is completely covered by a regular array
of $\varepsilon$-scaled shifted cells. In porous media terminology, the solid part/pore skeleton
is defined as the union of the cell regions $\varepsilon Y_{s}^{k}$,
i.e.
\[
\Omega_{0}^{\varepsilon}:=\bigcup_{k\in\mathbb{Z}^{d}}\varepsilon Y_{s}^{k},
\]
while the fluid part, which is filling up the total space, is represented by
\[
\Omega^{\varepsilon}:=\bigcup_{k\in\mathbb{Z}^{d}}\varepsilon Y_{l}^{k}.
\]

We denote the total pore surface of the skeleton by $\Gamma^{\varepsilon}:=\partial\Omega_{0}^{\varepsilon}$. This description indicates that the porous medium we have in mind is saturated with the fluid.

Note that we use the subscripts $N$ and $D$ in (\ref{eq:boundN})-(\ref{eq:boundD})
to distinguish, respectively, the case when the Neumann and Dirichlet
conditions are applied across the pore surface. Furthermore, the
assumption $\partial\Omega\cap\Gamma^{\varepsilon}=\emptyset$ holds.

In Figure \ref{fig:1-666}, we show an admissible geometry mimicking a porous medium
with periodic microstructures. We let
$\mbox{n}_{\varepsilon}:=\left(n_{1},...,n_{d}\right)$ be the unit outward normal
vector on the boundary $\Gamma^{\varepsilon}$. The representation
of the periodic geometries is in line with the descriptions from \cite{HJ91,KM16,RMK12}
and the references cited therein.

\begin{figure}
	\begin{centering}
		\includegraphics[scale=0.65]{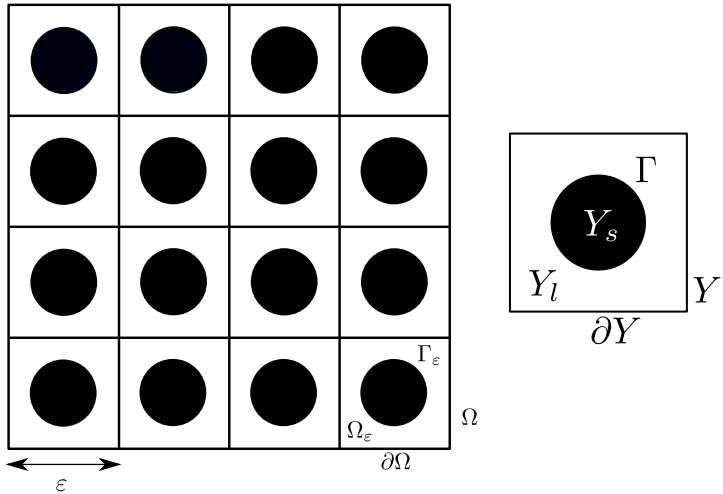}
		\par\end{centering}
	\caption{An admissible perforated domain. The perforations are referred here as microstructures.\label{fig:1-666}}
\end{figure}

We denote by $x\in\Omega^{\varepsilon}$ the macroscopic variable
and by $y=x/\varepsilon$ the microscopic variable representing fast
variations at the microscopic geometry. In the following, the upper
index $\varepsilon$ thus denotes the corresponding quantity evaluated
at $y=x/\varepsilon$. Suppose that our total pore space $\Omega^{\varepsilon}$
is bounded, connected and possesses $C^{0,1}$-boundary.

In the sequel, all the constants $C$ are independent of the homogenization
parameter $\varepsilon$, but their precise values may differ from
line to line and may change even within a single chain of estimates.
Throughout this paper, we use the superscript $\varepsilon$ to emphasize
the dependence of the material on the heterogeneity characterized
by the homogenization parameter.  In the following, we use $dS_{\varepsilon}$ to indicate the surface measure of oscillating surfaces (boundary of microstructures).  In addition, depending on the context, by $\left|\cdot\right|$ we denote either the volume measure of a domain or the absolute value of a function domain.

When writing the superscript $\pm$ or $\mp$ in e.g. $c_{\varepsilon}^{\pm}$, we mean both the positive  $c_{\varepsilon}^{+}$ and negative densities $c_{\varepsilon}^{-}$.

Due to our choice of microstructures, the interior extension from
$H^{1}\left(\Omega^{\varepsilon}\right)$ into $H^{1}\left(\Omega\right)$
exists and the extension constant is independent of $\varepsilon$
(see \cite[Lemma 5]{HJ91}).

\subsection{Assumptions on the data}
To ensure the weak solvability of our SNPP system, we need essentially several assumptions on the involved data and parameters.

$\left(\mbox{A}_{1}\right)$ The initial data of charged densities
are non-negative and bounded independently of $\varepsilon$, i.e. there exists an $\varepsilon$-independent constant $C_0>0$ such that
\[
0\le c^{\pm,0}\left(x\right)\le C_{0}\quad\text{for a.e. }\;x\in\Omega.
\]

$\left(\mbox{A}_{2}\right)$ The initial data of charged densities satisfy the compatibility
condition:
\[
\int_{\Omega^{\varepsilon}}\left(c^{+,0}-c^{-,0}\right)dx=\int_{\Gamma^{\varepsilon}}\sigma dS_{\varepsilon}.
\]

$\left(\mbox{A}_{3}\right)$ The chemical reaction rates are structured
as $R_{\varepsilon}^{\pm}\left(c_{\varepsilon}^{+},c_{\varepsilon}^{-}\right)=\mp\left(c_{\varepsilon}^{+}-c_{\varepsilon}^{-}\right)$.

$\left(\mbox{A}_{4}\right)$ The surface charge density $\sigma$
and the $\zeta$-potential $\Phi_{D}$ are constants.

$\left(\mbox{A}_{5}\right)$ The electrostatic potential $\Phi_{\varepsilon}$
has zero mean value in the fluid part, i.e. it satisfies
\[
\int_{\Omega^{\varepsilon}}\Phi_{\varepsilon}dx=0.
\]

$\left(\mbox{A}_{6}\right)$ The pressure $p_{\varepsilon}$ has zero
mean value in the fluid part, i.e. it satisfies
\[
\int_{\Omega^{\varepsilon}}p_{\varepsilon}(t,x)dx=0 \quad\text{for all }\;t\ge 0.
\]

\begin{rem}
	Assumption $\left(\mbox{A}_{1}\right)$ implies that at the initial
	moment, our charged colloidal particles are either neutral or positive
	in the macroscopic domain and their maximum voltage is known.
	Based on $\left(\mbox{A}_{2}\right)$,
	if the surface charge density is static (i.e. $\sigma=0$), then we obtain
	the so-called global charge neutrality which means that the charge
	density of our colloidal particles $c_{\varepsilon}^{\pm}$ is initially
	in neutrality. This global electroneutrality condition is particularly helpful in the analysis work (well-posedness, upscaling approach and numerical scheme) of related systems as stated in e.g.  \cite{SMRB99,SB15,RMK12}. Nevertheless, it is not  used in the derivation of the corrector estimates in this work.
	Cf. $\left(\mbox{A}_{3}\right)$, the reaction rates
	are linear and ensure the conservation of mass for the concentration
	fields.
\end{rem}

\section{Weak settings of SNPP models}

\subsection{Preliminary results}

In this subsection, we present the definition of two-scale convergence
as well as related compactness arguments (cf. \cite{All92,Ngu89}). We
also recall the results on the weak solvability and periodic homogenization
of the problem $\left(P^{\varepsilon}\right)$, which are derived rigorously in
\cite{ray2013thesis,RMK12}, e.g.
\begin{defn}
	\textbf{Two-scale convergence}
	
	Let $\left(u^{\varepsilon}\right)$ be a sequence of functions in
	$L^{2}\left(\left(0,T\right)\times\Omega\right)$ with $\Omega$ being
	an open set in $\mathbb{R}^{d}$, then it two-scale converges to a
	unique function $u^{0}\in L^{2}\left(\left(0,T\right)\times\Omega\times Y\right)$,
	denoted by $u^{\varepsilon}\stackrel{2}{\rightharpoonup}u^{0}$, if
	for any $\varphi\in C_{0}^{\infty}\left(\left(0,T\right)\times\Omega;C_{\#}^{\infty}\left(Y\right)\right)$
	we have
	\[
	\lim_{\varepsilon\to0}\int_{0}^{T}\int_{\Omega}u^{\varepsilon}\left(t,x\right)\varphi\left(t,x,\frac{x}{\varepsilon}\right)dxdt=\frac{1}{\left|Y\right|}\int_{0}^{T}\int_{\Omega}\int_{Y}u^{0}\left(t,x,y\right)\varphi\left(t,x,y\right)dydxdt.
	\]
\end{defn}
\begin{thm}
	\emph{\textbf{Two-scale compactness}}
	
	\begin{itemize}
		\item Let $\left(u^{\varepsilon}\right)$ be a bounded sequence in $L^{2}\left(\left(0,T\right)\times\Omega\right)$.
		Then there exists a function $u^{0}\in L^{2}\left(\left(0,T\right)\times\Omega\times Y\right)$
		such that, up to a subsequence, $u^{\varepsilon}$ two-scale converges
		to $u^{0}$.
		\item Let $\left(u^{\varepsilon}\right)$ be a bounded sequence in $L^{2}\left(0,T;H^{1}\left(\Omega\right)\right)$,
		then up to a subsequence, we have the two-scale convergence in gradient
		$\nabla u^{\varepsilon}\stackrel{2}{\rightharpoonup}\nabla_{x}u^{0}+\nabla_{y}u^{1}$
		for $u^{0}\in L^{2}\left(\left(0,T\right)\times\Omega\times Y\right)$
		and $u^{1}\in L^{2}\left(\left(0,T\right)\times\Omega;H_{\#}^{1}\left(Y\right)/\mathbb{R}\right)$.
	\end{itemize}
\end{thm}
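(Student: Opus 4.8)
The plan is to follow the classical Allaire--Nguetseng scheme. For the first assertion, the key auxiliary fact I would establish is the \emph{oscillation (mean-value) lemma}: for every admissible $\varphi\in C_0^\infty\bigl((0,T)\times\Omega;C_\#^\infty(Y)\bigr)$ one has
\[
\lim_{\varepsilon\to0}\int_0^T\!\!\int_\Omega\Bigl|\varphi\bigl(t,x,\tfrac{x}{\varepsilon}\bigr)\Bigr|^2dx\,dt=\frac{1}{|Y|}\int_0^T\!\!\int_\Omega\!\!\int_Y|\varphi(t,x,y)|^2\,dy\,dx\,dt,
\]
which follows by partitioning $\Omega$ into $\varepsilon$-scaled cells, recognising the left-hand side as a Riemann sum in the fast variable, and using uniform continuity of $\varphi$ on compacta. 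Next I would introduce the linear functionals $L_\varepsilon(\varphi):=\int_0^T\!\int_\Omega u^\varepsilon(t,x)\,\varphi(t,x,x/\varepsilon)\,dx\,dt$. By Cauchy--Schwarz and the oscillation lemma, $|L_\varepsilon(\varphi)|\le\|u^\varepsilon\|_{L^2}\,\|\varphi(\cdot,\cdot,\cdot/\varepsilon)\|_{L^2}\le C\|\varphi\|_{L^2((0,T)\times\Omega\times Y)}$ for $\varepsilon$ small, with $C$ independent of $\varepsilon$ since $(u^\varepsilon)$ is bounded. Choosing a countable dense subset of the separable Hilbert space $L^2((0,T)\times\Omega\times Y)$ and applying a diagonal extraction together with this uniform bound, I obtain a subsequence along which $L_\varepsilon(\varphi)$ converges for all admissible $\varphi$ to a functional that extends boundedly to $L^2((0,T)\times\Omega\times Y)$; the Riesz representation theorem then yields the two-scale limit $u^0$, and uniqueness follows from the density of admissible test functions.

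For the second assertion, I would apply the first part to both $(u^\varepsilon)$ and $(\nabla u^\varepsilon)$, extracting a common subsequence with $u^\varepsilon\stackrel{2}{\rightharpoonup}u^0$ and $\nabla u^\varepsilon\stackrel{2}{\rightharpoonup}\xi^0$ for some $u^0\in L^2((0,T)\times\Omega\times Y)$ and $\xi^0\in[L^2((0,T)\times\Omega\times Y)]^d$. First I would show $u^0$ is independent of $y$: testing $\nabla u^\varepsilon$ against $\varepsilon\,\psi(t,x)\,\Psi(x/\varepsilon)$ with $\psi\in C_0^\infty((0,T)\times\Omega)$ and $\Psi\in[C_\#^\infty(Y)]^d$, integrating by parts in $x$, and letting $\varepsilon\to0$, the left-hand side vanishes (because $\|\nabla u^\varepsilon\|_{L^2}$ is bounded and the prefactor is $\varepsilon$), which forces $\int_0^T\!\int_\Omega\!\int_Y u^0\,\psi\,(\nabla_y\cdot\Psi)\,dy\,dx\,dt=0$, i.e. $\nabla_y u^0=0$. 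Then, testing $\nabla u^\varepsilon$ against $\Psi(t,x,x/\varepsilon)$ for $\Psi\in C_0^\infty((0,T)\times\Omega;[C_\#^\infty(Y)]^d)$ with $\nabla_y\cdot\Psi=0$, the singular $\varepsilon^{-1}$ term produced by integration by parts disappears, and passing to the two-scale limit (using $u^0=u^0(t,x)$) gives
\[
\int_0^T\!\!\int_\Omega\!\!\int_Y\bigl(\xi^0-\nabla_x u^0\bigr)\cdot\Psi\,dy\,dx\,dt=0
\]
for every such divergence-free $\Psi$.

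The only genuinely non-routine step, and the one I expect to be the main obstacle, is converting this orthogonality relation into the claimed representation $\xi^0=\nabla_x u^0+\nabla_y u^1$. This rests on the Helmholtz/de Rham-type fact that any field in $[L^2_\#(Y)]^d$ that is $L^2(Y)$-orthogonal to all smooth $Y$-periodic $y$-divergence-free fields is a $y$-gradient $\nabla_y w$ with $w\in H^1_\#(Y)/\mathbb{R}$; I would prove it, e.g., via Fourier series on the torus, where the orthogonality condition says each Fourier coefficient of the field is parallel to the corresponding frequency vector. Applying this parametrically in $(t,x)$ and combining it with a measurable-selection argument (to retain the $(t,x)$-dependence and the square-integrability) produces $u^1\in L^2((0,T)\times\Omega;H^1_\#(Y)/\mathbb{R})$ with $\xi^0=\nabla_x u^0+\nabla_y u^1$. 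Apart from this, the remaining care concerns the admissibility class of test functions --- ensuring that $\Psi(t,x,x/\varepsilon)$ is measurable and that the oscillation lemma applies to it --- which is standard and can be quoted directly from \cite{All92,Ngu89}.
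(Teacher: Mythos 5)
Your proposal is correct: it reproduces the classical Nguetseng--Allaire compactness argument (oscillation lemma, uniform bound and diagonal extraction with Riesz representation for the first item; testing the gradient against $\varepsilon$-scaled and $y$-divergence-free oscillating fields plus the periodic Helmholtz/de Rham lemma for the second), which is exactly the proof the paper relies on, since it states this theorem without proof and refers to \cite{All92,Ngu89}. No essential gap; the only implicit step is that testing with $y$-independent divergence-free fields first shows $\nabla_x u^0\in L^2$, which your scheme covers as a special case.
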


\begin{defn}
	\textbf{Two-scale convergence for $\varepsilon$-periodic
		hypersurfaces}
	
	Let
	$\left(u^{\varepsilon}\right)$ be a sequence of functions in $L^{2}\left(\left(0,T\right)\times\Gamma^{\varepsilon}\right)$,
	then $u^{\varepsilon}$ two-scale converges to a limit $u^{0}\in L^{2}\left(\left(0,T\right)\times\Omega\times \Gamma \right)$
	if for any $\varphi\in C_{0}^{\infty}\left(\left(0,T\right)\times\Omega;C_{\#}^{\infty}\left(\Gamma\right)\right)$
	we have
	\[
	\lim_{\varepsilon\to0}\int_{0}^{T}\int_{\Gamma^{\varepsilon}}\varepsilon u^{\varepsilon}\left(t,x\right)\varphi\left(t,x,\frac{x}{\varepsilon}\right)dS_{\varepsilon}dt=\frac{1}{\left|Y\right|}\int_{0}^{T}\int_{\Omega}\int_{\Gamma}u^{0}\left(t,x,y\right)\varphi\left(t,x,y\right)dS_{y}dxdt.
	\]	
\end{defn}
\begin{rem}
	The two-scale compactness on surfaces is the following:
	for each bounded sequence $\left(u^{\varepsilon}\right)$ in $L^{2}\left(\left(0,T\right)\times\Gamma^{\varepsilon}\right)$,
	one can extract a subsequence which two-scale converges to a limit
	$u^{0}\in L^{2}\left(\left(0,T\right)\times\Omega\times\Gamma\right)$.
	Furthermore, if $\left(u^{\varepsilon}\right)$ is bounded in $L^{\infty}\left(\left(0,T\right)\times\Gamma^{\varepsilon}\right)$,
	it then two-scale converges to a limit function $u^{0}\in L^{\infty}\left(\left(0,T\right)\times\Omega\times\Gamma\right)$.
\end{rem}
\begin{defn}
	\label{def:mathbb=00007BPeps=00007D}\textbf{Weak formulation of $({P}^{\varepsilon})$}
	
	The vector  $\left(v_{\varepsilon},p_{\varepsilon},\Phi_{\varepsilon},c_{\varepsilon}^{\pm}\right)$
	satisfying
	\[
	v_{\varepsilon}\in L^{\infty}\left(0,T;H_{0}^{1}\left(\Omega^{\varepsilon}\right)\right),p_{\varepsilon}\in L^{\infty}\left(0,T;L^{2}\left(\Omega^{\varepsilon}\right)\right),\Phi_{\varepsilon}\in L^{\infty}\left(0,T;H^{1}\left(\Omega^{\varepsilon}\right)\right),
	\]
	\[
	c_{\varepsilon}^{\pm}\in L^{\infty}\left(0,T;L^{2}\left(\Omega^{\varepsilon}\right)\right)\cap L^{2}\left(0,T;H^{1}\left(\Omega^{\varepsilon}\right)\right),\partial_{t}c_{\varepsilon}^{\pm}\in L^{2}\left(0,T;\left(H^{1}\left(\Omega^{\varepsilon}\right)\right)'\right),
	\]
	is a weak solution to $\left(P^{\varepsilon}\right)$ provided that
	\begin{align}
		& \int_{\Omega^{\varepsilon}}\left(\varepsilon^{2}\nabla v_{\varepsilon}\cdot\nabla\varphi_{1}-p_{\varepsilon}\nabla\cdot\varphi_{1}\right)dx=-\int_{\Omega^{\varepsilon}}\varepsilon^{\beta}\left(c_{\varepsilon}^{+}-c_{\varepsilon}^{-}\right)\nabla\Phi_{\varepsilon}\cdot\varphi_{1}dx,\\
		& \int_{\Omega^{\varepsilon}}v_{\varepsilon}\cdot\nabla\psi dx=0,\\
		& \int_{\Omega^{\varepsilon}}\varepsilon^{\alpha}\nabla\Phi_{\varepsilon}\cdot\nabla\varphi_{2}dx-\int_{\Gamma^{\varepsilon}}\varepsilon^{\alpha}\nabla\Phi_{\varepsilon}\cdot\text{n}\varphi_{2}dS_{\varepsilon}=\int_{\Omega^{\varepsilon}}\left(c_{\varepsilon}^{+}-c_{\varepsilon}^{-}\right)\varphi_{2}dx,\\
		& \left\langle \partial_{t}c_{\varepsilon}^{\pm},\varphi_{3}\right\rangle _{\left(H^{1}(\Omega^{\varepsilon})\right)',H^{1}(\Omega^{\varepsilon})}+\int_{\Omega^{\varepsilon}}\left(-v_{\varepsilon}c_{\varepsilon}^{\pm}+\nabla c_{\varepsilon}^{\pm}\pm\varepsilon^{\gamma}c_{\varepsilon}^{\pm}\nabla\Phi_{\varepsilon}\right)\cdot\nabla\varphi_{3}dx
		\nonumber \\&\qquad \qquad \qquad \qquad \qquad \quad  =\int_{\Omega^{\varepsilon}}R_{\varepsilon}^{\pm}\left(c_{\varepsilon}^{+},c_{\varepsilon}^{-}\right)\varphi_{3}dx.
	\end{align}
	for all $\left(\varphi_{1},\varphi_{2},\varphi_{3},\psi\right)\in\left[H_{0}^{1}\left(\Omega^{\varepsilon}\right)\right]^{d}\times H^{1}\left(\Omega^{\varepsilon}\right)\times H^{1}\left(\Omega^{\varepsilon}\right)\times H^{1}\left(\Omega^{\varepsilon}\right)$.
\end{defn}
\begin{thm}\label{thm:existence}
	\emph{\textbf{Existence and uniqueness of solutions}}
	
	Assume $\left(\mbox{A}_{1}\right)$-$\left(\mbox{A}_{6}\right)$. For
	each $\varepsilon>0$, the microscopic problem $\left(P^{\varepsilon}\right)$ admits a unique
	weak solution $\left(v_{\varepsilon},p_{\varepsilon},\Phi_{\varepsilon},c_{\varepsilon}^{\pm}\right)$
	in the sense of Definition \ref{def:mathbb=00007BPeps=00007D}.
\end{thm}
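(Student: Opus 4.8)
The plan is to establish existence by a Galerkin-type approximation combined with a fixed-point argument that decouples the strongly coupled Stokes--Poisson--Nernst--Planck structure, and then to prove uniqueness by energy estimates on the difference of two solutions. First I would set up an iteration map: given $c_\varepsilon^{\pm}$ from the previous step (or from an initial guess, say the initial data extended constantly in time), solve the Poisson problem \eqref{eq:haha4}--\eqref{eq:haha7} (with the appropriate Neumann datum \eqref{eq:boundN} or Dirichlet datum \eqref{eq:boundD}) for $\Phi_\varepsilon$ — a linear elliptic problem whose solvability and a-priori $H^1$ bound (using the mean-value normalization $(\mathrm{A}_5)$ and the compatibility condition $(\mathrm{A}_2)$ in the Neumann case) are classical; then solve the Stokes system \eqref{eq:haha1}--\eqref{eq:haha-1-1} for $(v_\varepsilon,p_\varepsilon)$ with right-hand side $-\varepsilon^{\beta}(c_\varepsilon^{+}-c_\varepsilon^{-})\nabla\Phi_\varepsilon \in L^2$, using the standard inf-sup/Ladyzhenskaya theory (the pressure normalized via $(\mathrm{A}_6)$); finally, with $v_\varepsilon$ and $\nabla\Phi_\varepsilon$ now fixed drift coefficients, solve the two linear parabolic Nernst--Planck equations \eqref{eq:haha8}--\eqref{eq:haha10} for the new $c_\varepsilon^{\pm}$ via Galerkin in a basis of $H^1(\Omega^\varepsilon)$, which is where $\partial_t c_\varepsilon^{\pm}\in L^2(0,T;(H^1(\Omega^\varepsilon))')$ comes from.

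Next I would derive the $\varepsilon$-dependent a-priori estimates that make the iteration map well-defined on a suitable ball and guarantee compactness. The key ingredients are: (i) an $L^\infty$ and non-negativity bound for $c_\varepsilon^{\pm}$, obtained by testing the Nernst--Planck equation with $(c_\varepsilon^{\pm}-M)_+$ and with the negative part $(c_\varepsilon^{\pm})_-$, using $\nabla\cdot v_\varepsilon=0$, the no-flux boundary condition, the sign/structure of the drift term $\mp\varepsilon^\gamma c_\varepsilon^\pm\nabla\Phi_\varepsilon$, and the linear reaction structure $(\mathrm{A}_3)$ — here Gronwall gives a time-dependent but $\varepsilon$-uniform (or at least controlled) $L^\infty$ bound from $(\mathrm{A}_1)$; (ii) an energy estimate: test Nernst--Planck with $c_\varepsilon^{\pm}$ itself to control $\|c_\varepsilon^{\pm}\|_{L^\infty(0,T;L^2)}$ and $\|\nabla c_\varepsilon^{\pm}\|_{L^2(0,T;L^2)}$, absorbing the drift and convection terms with Young's inequality and the already-established $L^\infty$ bound plus the Poisson/Stokes estimates; (iii) feeding these back to get uniform $H^1$ bounds on $\Phi_\varepsilon$ and uniform bounds on $(v_\varepsilon,p_\varepsilon)$. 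With these bounds, Aubin--Lions gives strong $L^2$ convergence of a subsequence of the iterates $c_\varepsilon^{\pm}$ (or one applies Schauder's fixed point theorem to the map on a closed convex bounded set of $L^2(Q_T^\varepsilon)$, having shown the map is continuous and compact); passing to the limit in each linear subproblem identifies the fixed point as a weak solution in the sense of Definition \ref{def:mathbb=00007BPeps=00007D}.

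For uniqueness, suppose $(v_\varepsilon^{(i)},p_\varepsilon^{(i)},\Phi_\varepsilon^{(i)},c_\varepsilon^{\pm,(i)})$, $i=1,2$, are two weak solutions; write the differences $\delta v=v_\varepsilon^{(1)}-v_\varepsilon^{(2)}$, $\delta\Phi$, $\delta c^{\pm}$. The Poisson equation is linear, so $\|\nabla\delta\Phi\|_{L^2}\le C\|\delta c^{+}-\delta c^{-}\|_{L^2}$ (and $\|\delta\Phi\|_{L^2}$ too, by Poincaré with the zero-mean normalization or the Dirichlet condition). The Stokes equation is linear in $(v,p)$ with right-hand side $-\varepsilon^{\beta}[(c^{+,(1)}-c^{-,(1)})\nabla\Phi^{(1)}-(c^{+,(2)}-c^{-,(2)})\nabla\Phi^{(2)}]$, which splits as $(c^{+,(1)}-c^{-,(1)})\nabla\delta\Phi + (\delta c^{+}-\delta c^{-})\nabla\Phi^{(2)}$; using the $L^\infty$ bound on $c^{\pm,(1)}$ and the $H^1$ bound on $\Phi^{(2)}$ (the latter requiring a little care — one may need $\nabla\Phi^{(2)}\in L^\infty$, or instead estimate this term after multiplying by a test function and using Sobolev embedding) one gets $\|\delta v\|_{H^1}\le C(\|\nabla\delta\Phi\|_{L^2}+\|\delta c^{+}-\delta c^{-}\|_{L^2})\le C\|\delta c^{+}-\delta c^{-}\|_{L^2}$. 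Finally, testing the difference of the Nernst--Planck equations with $\delta c^{\pm}$ and carefully handling the difference of the bilinear convection and drift terms (again splitting into "difference in the transported quantity" times "old coefficient" plus "old quantity" times "difference in coefficient", then using the uniform $L^\infty$/$H^1$ bounds and Young's inequality to absorb gradient terms into the diffusion term), one arrives at
\[
\frac{d}{dt}\sum_{\pm}\|\delta c^{\pm}\|_{L^2(\Omega^\varepsilon)}^2 + c\sum_{\pm}\|\nabla\delta c^{\pm}\|_{L^2(\Omega^\varepsilon)}^2 \le C(t)\sum_{\pm}\|\delta c^{\pm}\|_{L^2(\Omega^\varepsilon)}^2,
\]
and Gronwall (with zero initial difference) forces $\delta c^{\pm}\equiv 0$, hence $\delta\Phi\equiv 0$, $\delta v\equiv 0$, and $\delta p\equiv 0$ by the pressure normalization. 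The main obstacle I anticipate is the rigorous control of the nonlinear drift and convection terms in both the fixed-point step and the uniqueness step: these are products of three factors ($v_\varepsilon$ or $\nabla\Phi_\varepsilon$, the concentration, and a test-function gradient) and one must use the $L^\infty$ bound on $c_\varepsilon^{\pm}$ in an essential way, which is itself the most delicate a-priori estimate since it requires the maximum-principle-type argument to interact correctly with the Stokes convection and the Poisson-coupled electric drift. Since this theorem is quoted from \cite{ray2013thesis,RMK12}, I would at this point largely refer to those references for the technical details while sketching the above structure.
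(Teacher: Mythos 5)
The paper gives no proof of this theorem: it is quoted verbatim from the literature, with the proof delegated to \cite{RMK12} (Theorem 3.7) and \cite{ray2013thesis}, and your sketch follows essentially the same strategy used there --- decoupling the Stokes, Poisson and Nernst--Planck subproblems in a fixed-point/Galerkin iteration, establishing non-negativity and $\varepsilon$-uniform $L^{\infty}$ bounds for $c_{\varepsilon}^{\pm}$ by truncation testing, closing the energy estimates, and proving uniqueness by a Gronwall argument on differences with the pressure recovered through the normalization $\left(\mbox{A}_{6}\right)$. The delicate points you flag (the interaction of the $L^{\infty}$ bound with the drift and convection terms, and the low integrability of $\left(\delta c^{+}-\delta c^{-}\right)\nabla\Phi^{(2)}$ in the uniqueness step) are precisely where the cited proof does its technical work, so deferring to those references there is appropriate.
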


The proof of Theorem \ref{thm:existence} can be found in \cite{RMK12} (see Theorem 3.7) and \cite{ray2013thesis}.
\begin{thm}
	\emph{\textbf{Effective transport tensors. Cell problems}}\label{thm:auxiliary}
	
	The averaged macroscopic permittivity/diffusion tensor $\mathbb{D}=\left(D_{ij}\right)_{1\le i,j\le d}$
	is defined by
	\[
	D_{ij}:=\int_{Y_{l}}\left(\delta_{ij}+\partial_{y_{i}}\varphi_{j}\left(y\right)\right)dy,
	\]
	where $\varphi_{j}=\varphi_{j}\left(y\right)$ for $1\le j\le d$
	are unique weak solutions in $H^{1}\left(Y_{l}\right)$ of the following family of cell problems
	\begin{equation}
	\begin{cases}
	-\Delta_{y}\varphi_{j}\left(y\right)=0 & \text{ in }\;Y_{l},\\
	\nabla_{y}\varphi_{j}\left(y\right)\cdot\text{n}=-e_{j}\cdot\text{n} & \text{ on }\;\Gamma,\\
	\varphi_{j}\;\text{ periodic in }\;y.
	\end{cases}\label{eq:cellproblem1}
	\end{equation}
	
	Furthermore, the averaged macroscopic permeability tensor $\mathbb{K}=\left(K_{ij}\right)_{1\le i,j\le d}$
	is defined by
	\[
	K_{ij}:=\int_{Y_{l}}w_{j}^{i}dy,
	\]
	where $w_{j}=w_{j}\left(y\right)$ together with $\pi_{j}=\pi_{j}\left(y\right)$
	for $1\le j\le d$ are unique weak solutions, respectively, in $H^{1}\left(Y_{l}\right)$ and $L^{2}\left(Y_{l}\right)$ of the following family of cell problems
	\begin{equation}
	\begin{cases}
	-\Delta_{y}w_{j}+\nabla_{y}\pi_{j}=e_{j} & \text{ in }\;Y_{l},\\
	\nabla_{y}\cdot w_{j}=0 & \text{ in }\;Y_{l},\\
	w_{j}=0 & \text{ in }\;\Gamma,\\
	w_{j},\pi_{j}\;\text{ periodic in }\;y.
	\end{cases}\label{eq:cellproblem2}
	\end{equation}
	
	Also, we define the following cell problem
	\begin{equation}
	\begin{cases}
	-\Delta_{y}\varphi\left(y\right)=1 & \text{ in }\;Y_{l},\\
	\varphi\left(y\right)=0 & \text{ on }\;\Gamma,\\
	\varphi\;\text{ periodic in }\;y,
	\end{cases}\label{eq:cellproblem3}
	\end{equation}
	which admits a unique weak solution in $H^{1}\left(Y_{l}\right)$.
	
	Note that $\delta_{ij}$ denotes the Kronecker symbol and $e_{j}$
	is the $j$th unit vector of $\mathbb{R}^{d}$.
\end{thm}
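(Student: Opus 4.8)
The plan is to treat each of the three cell problems as a standard linear elliptic boundary value problem on the pore cell $Y_l$ and to solve it by the Lax--Milgram lemma (supplemented, in the Stokes case, by the de Rham/inf-sup argument that recovers the pressure), working on a Hilbert space that encodes periodicity together with the condition imposed on $\Gamma$. Throughout, the structural hypotheses on $Y$ made in Subsection~\ref{subsec:A-geometrical-interpretation} (the pore $Y_l$ is connected with Lipschitz boundary, $\Gamma$ has positive $(d-1)$-Hausdorff measure, and the solid part $Y_s$ does not touch $\partial Y$) are exactly what is needed to make the Poincaré, trace, and inf-sup inequalities available.

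For the family \eqref{eq:cellproblem1} I would work in $H^{1}_{\#}(Y_l)/\mathbb{R}$, i.e. periodic $H^1$-functions identified up to additive constants (equivalently normalised to zero mean over $Y_l$), and look for $\varphi_j$ with $\int_{Y_l}\nabla_y\varphi_j\cdot\nabla_y\psi\,dy=-\int_{\Gamma}(e_j\cdot\mathrm{n})\,\psi\,dS_y$ for all periodic $\psi$. The bilinear form $\int_{Y_l}\nabla_y\varphi_j\cdot\nabla_y\psi\,dy$ is continuous and, by the Poincaré--Wirtinger inequality on the connected Lipschitz domain $Y_l$, coercive on the quotient; the right-hand side is a bounded functional by the trace theorem. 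The only point needing care is the solvability (compatibility) condition, which makes the functional well defined on the quotient: since there is no volume source, one must check $\int_{\Gamma}e_j\cdot\mathrm{n}\,dS_y=0$, and this follows by applying the divergence theorem to the constant field $e_j$ over the inclusion $Y_s$, whose entire boundary is $\Gamma$. Lax--Milgram then gives existence and uniqueness of $\varphi_j$ in the quotient, so $\mathbb{D}$ is well defined.

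For the Stokes cell problem \eqref{eq:cellproblem2} I would set $V:=\{w\in H^{1}_{\#}(Y_l)^d:\ \nabla_y\cdot w=0\ \text{in}\ Y_l,\ w=0\ \text{on}\ \Gamma\}$, a closed subspace of $H^{1}_{\#}(Y_l)^d$, obtain the velocity $w_j\in V$ by Lax--Milgram applied to $a(w,v)=\int_{Y_l}\nabla_y w:\nabla_y v\,dy$ (coercive on $V$ by the Poincaré inequality for fields vanishing on $\Gamma$) against the bounded functional $v\mapsto\int_{Y_l}e_j\cdot v\,dy$, and then recover the pressure: the functional $v\mapsto a(w_j,v)-\int_{Y_l}e_j\cdot v\,dy$ kills all divergence-free $v$, so by de Rham's theorem (equivalently the inf-sup/LBB property of $\mathrm{div}$ on the Lipschitz domain $Y_l$) there is a unique $\pi_j\in L^{2}(Y_l)/\mathbb{R}$ with $-\Delta_y w_j+\nabla_y\pi_j=e_j$ distributionally; normalising $\pi_j$ to zero mean fixes it, and $\mathbb{K}$ is well defined. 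For \eqref{eq:cellproblem3} I would use $\mathcal{H}:=\{v\in H^{1}_{\#}(Y_l):\ v=0\ \text{on}\ \Gamma\}$ --- here no quotient is needed, the homogeneous Dirichlet condition on $\Gamma$ already removing constants --- and solve $\int_{Y_l}\nabla_y\varphi\cdot\nabla_y v\,dy=\int_{Y_l}v\,dy$ for all $v\in\mathcal{H}$ by Lax--Milgram, coercivity coming again from the Poincaré inequality on $\mathcal{H}$.

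I expect the only genuinely non-elementary step to be the pressure recovery in \eqref{eq:cellproblem2}: producing $\pi_j$ and controlling it in $L^{2}(Y_l)$ rests on the inf-sup condition for $\mathrm{div}$ on $Y_l$, i.e. on a Bogovskii-type bounded right inverse of the divergence, which in turn uses the Lipschitz regularity of $\partial Y_l$ and the connectedness of the pore. The second, and much easier, thing to get right is the compatibility identity $\int_{\Gamma}e_j\cdot\mathrm{n}\,dS_y=0$ in \eqref{eq:cellproblem1}. Everything else is a routine application of Lax--Milgram together with the Poincaré and trace inequalities on $Y_l$.
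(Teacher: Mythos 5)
Your proposal is correct and takes essentially the same route as the argument the paper relies on: the paper gives no proof of Theorem \ref{thm:auxiliary} itself but defers to \cite{RMK12,ray2013thesis} and the classical references \cite{CP99,Sanchez80}, where exactly these Lax--Milgram/Poincar\'e arguments are used, including the compatibility identity $\int_{\Gamma}e_{j}\cdot\text{n}\,dS_{y}=0$ for \eqref{eq:cellproblem1} and the de Rham/inf-sup recovery of $\pi_{j}$ for \eqref{eq:cellproblem2}. Your treatment of uniqueness (working modulo additive constants for $\varphi_{j}$ and normalising $\pi_{j}$ to zero mean) is the correct reading of the theorem's ``unique weak solutions'' claim.
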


The proof of Theorem \ref{thm:auxiliary} can be found in \cite{RMK12} (see Definition 4.4) and \cite{ray2013thesis}.

\begin{rem}
	Fundamental results for elliptic equations provide that the problems
	(\ref{eq:cellproblem1}) and (\ref{eq:cellproblem3}) admit a unique
	weak solution in $H^{1}\left(Y_{l}\right)$ (cf. \cite{CP99}). Similarly, the
	solutions $w_{j}^{i}$ and $\pi_{j}$ ($1\le i,j\le d$) of (\ref{eq:cellproblem2})
	are in $H^{1}\left(Y_{l}\right)$ and $L^{2}\left(Y_{l}\right)$,
	respectively. Particularly, for every $s\in\left(-\frac{1}{2},\frac{1}{2}\right)$
	it follows from Theorem 4 and Theorem 7 in \cite{Giu98} that for
	$1\le i,j\le d$,
	\[
	\varphi_{j}^{i}\in H^{1+s}\left(Y_{l}\right)\text{ and }w_{j}^{i}\in H^{1+s}\left(Y_{l}\right),\pi_{j}\in H^{s}\left(Y_{l}\right)
	\]
	are unique weak solution to (\ref{eq:cellproblem1}) and (\ref{eq:cellproblem2}),
	respectively.
	
	The permeability tensor $\mathbb{K}$ is symmetric
	and positive definite (cf. \cite[Proposition 2.2, Chapter 7]{Sanchez80}),
	whilst the same properties of the permittivity tensor $\mathbb{D}$
	are proven in \cite{CP99}.
\end{rem}

\subsection{Neumann condition for the electrostatic potential}

\begin{thm}
	\label{thm:positive+bound}\emph{\textbf{Positivity and Boundedness of solution}}
	
	Assume $\left(\mbox{A}_{1}\right)$-$\left(\mbox{A}_{4}\right)$.
	Let $\left(v_{\varepsilon},p_{\varepsilon},\Phi_{\varepsilon},c_{\varepsilon}^{\pm}\right)$
	be a weak solution of the microscopic problem $\left(P^{\varepsilon}\right)$ with the Neumann condition \eqref{eq:boundN} 
	in the sense of Definition \ref{def:mathbb=00007BPeps=00007D}. Then
	the concentration fields
	$c_{\varepsilon}^{\pm}$ are non-negative and essentially bounded from above uniformly
	in $\varepsilon$.
\end{thm}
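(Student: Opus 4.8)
The plan is to treat the two assertions separately — first the (pointwise, $\varepsilon$-fixed) non-negativity, then the $\varepsilon$-uniform upper bound — working throughout from the weak formulation of Definition \ref{def:mathbb=00007BPeps=00007D} and systematically exploiting three structural features: the divergence-free velocity $v_\varepsilon$ with homogeneous boundary data \eqref{eq:haha-1-1}, the conservative reaction structure $(\mathrm{A}_3)$, and the Poisson coupling $-\varepsilon^\alpha\Delta\Phi_\varepsilon=c_\varepsilon^+-c_\varepsilon^-$, which allows one to trade the awkward drift term for lower-order contributions. The uniform extension operator of \cite{HJ91} will be used to transplant Sobolev embeddings from $\Omega^\varepsilon$ to $\Omega$ with $\varepsilon$-independent constants.

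\textbf{Non-negativity.} Test the $c_\varepsilon^+$- and $c_\varepsilon^-$-equations with the admissible functions $-(c_\varepsilon^+)^-$ and $-(c_\varepsilon^-)^-$, where $w^-:=\max\{-w,0\}$; by $(\mathrm{A}_1)$ the initial negative parts vanish. The convective term drops since $\nabla\cdot v_\varepsilon=0$ and $v_\varepsilon$ vanishes on the whole boundary, while diffusion contributes $\|\nabla(c_\varepsilon^\pm)^-\|^2\ge0$. For the drift term $\varepsilon^\gamma\int c_\varepsilon^\pm\nabla\Phi_\varepsilon\cdot\nabla(\cdot)$, rewrite it on $\{c_\varepsilon^\pm<0\}$ as $\pm\tfrac{\varepsilon^\gamma}{2}\int\nabla\Phi_\varepsilon\cdot\nabla\big(((c_\varepsilon^\pm)^-)^2\big)$, integrate by parts and insert $\varepsilon^\alpha\Delta\Phi_\varepsilon=-(c_\varepsilon^+-c_\varepsilon^-)$ together with the boundary data \eqref{eq:boundN} (equivalently, test \eqref{eq:haha4} directly with $((c_\varepsilon^\pm)^-)^2$). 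Adding the two identities and using $(\mathrm{A}_3)$ — which makes $R_\varepsilon^\pm$ quasi-positive, i.e. $R_\varepsilon^+\ge0$ on $\{c_\varepsilon^+=0\}$ once $c_\varepsilon^-\ge0$, and symmetrically — one organizes the cross terms so that everything not carrying a favourable sign is either a $\Gamma_N^\varepsilon$-surface integral, controlled by the $\varepsilon$-trace inequality, or an $L^2$ cross term of the form $\int(c_\varepsilon^+)^-(c_\varepsilon^-)^-$. This yields, at fixed $\varepsilon$, a differential inequality $\tfrac{d}{dt}\big(\|(c_\varepsilon^+)^-\|^2+\|(c_\varepsilon^-)^-\|^2\big)\le C_\varepsilon\big(\|(c_\varepsilon^+)^-\|^2+\|(c_\varepsilon^-)^-\|^2\big)$, and Grönwall forces $(c_\varepsilon^\pm)^-\equiv0$.

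\textbf{Uniform upper bound.} Put $u_\varepsilon:=c_\varepsilon^++c_\varepsilon^-$, fix $M\ge 2C_0$, and test the equation satisfied by $u_\varepsilon$ with $(u_\varepsilon-M)^+$. The algebra is favourable: $(\mathrm{A}_3)$ gives $R_\varepsilon^++R_\varepsilon^-=0$; the $v_\varepsilon$-transport part vanishes as before; and eliminating $\Delta\Phi_\varepsilon$ from the sum of the two drift contributions produces a term $-\varepsilon^{\gamma-\alpha}\int(c_\varepsilon^+-c_\varepsilon^-)^2(u_\varepsilon-M)^+$ which cancels the like term arising directly from the two single-species drift terms, leaving only $\varepsilon^{\gamma}\int(c_\varepsilon^+-c_\varepsilon^-)\nabla\Phi_\varepsilon\cdot\nabla(u_\varepsilon-M)^+$ and a $\Gamma_N^\varepsilon$-surface term. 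One then bounds $\nabla\Phi_\varepsilon$ by elliptic regularity applied to \eqref{eq:haha4}--\eqref{eq:boundN}, bootstrapping the available bound $c_\varepsilon^\pm\in L^\infty(0,T;L^2(\Omega^\varepsilon))$ to $\Phi_\varepsilon\in L^\infty(0,T;W^{2,p}(\Omega^\varepsilon))$, hence $\nabla\Phi_\varepsilon\in L^\infty$ for $d\le 3$, keeping track of the $\varepsilon$-powers — it is here that the relation between $\alpha$ and $\gamma$ enters to keep constants $\varepsilon$-independent. Closing either a De Giorgi / Stampacchia iteration on the level sets $\{u_\varepsilon>k\}$ or a Moser iteration on $\|u_\varepsilon\|_{L^p}$ then gives $\|u_\varepsilon\|_{L^\infty(Q_T^\varepsilon)}\le C$ with $C$ independent of $\varepsilon$, and combined with non-negativity this bounds $c_\varepsilon^\pm$ as claimed.

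\textbf{Main obstacle.} The crux is the nonlinear drift $\varepsilon^\gamma c_\varepsilon^\pm\nabla\Phi_\varepsilon$: it is neither sign-definite nor a priori small, and a naive energy estimate leaves a term $\varepsilon^{2\gamma}\int(c_\varepsilon^\pm)^2|\nabla\Phi_\varepsilon|^2$ that cannot be absorbed without already knowing $\nabla\Phi_\varepsilon$ bounded — which in turn needs $c_\varepsilon^\pm\in L^\infty$. Breaking this circularity is essential: one must either interleave the iteration for $c_\varepsilon^\pm$ with elliptic regularity for $\Phi_\varepsilon$, or, as above, use the cancellation generated by the Poisson substitution so that the surviving drift contribution is genuinely lower order. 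The second difficulty, more bookkeeping-heavy, is propagating $\varepsilon$-uniformity through the trace inequalities on $\Gamma^\varepsilon$ (with their $\varepsilon^{-1}$ and $\varepsilon$ weights) and through the $\varepsilon^{\gamma-\alpha}$ and $\varepsilon^{\gamma-\alpha+1}$ prefactors; this is precisely what dictates which scaling choices $(\alpha,\gamma)$ are admissible for the uniform bound.
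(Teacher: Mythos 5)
First, note that the paper does not prove this theorem at all: it is quoted from \cite{RMK12} (Theorems 3.3 and 3.4), and the accompanying remark only records that the proof there goes by suitable choices of test functions and energy estimates. Your proposal lives in that same family of techniques, but as written it has two genuine gaps. In the non-negativity step, after you rewrite the drift tested against $-(c_{\varepsilon}^{\pm})^{-}$ as $\tfrac{\varepsilon^{\gamma}}{2}\int\nabla\Phi_{\varepsilon}\cdot\nabla\big(((c_{\varepsilon}^{\pm})^{-})^{2}\big)dx$ and substitute $-\varepsilon^{\alpha}\Delta\Phi_{\varepsilon}=c_{\varepsilon}^{+}-c_{\varepsilon}^{-}$, the volume contribution is (up to sign) $\tfrac{\varepsilon^{\gamma-\alpha}}{2}\int(c_{\varepsilon}^{+}-c_{\varepsilon}^{-})\,((c_{\varepsilon}^{\pm})^{-})^{2}dx$, which is \emph{cubic} in the unknowns. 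Splitting $c_{\varepsilon}^{\mp}$ into positive and negative parts leaves terms such as $\int(c_{\varepsilon}^{-})^{+}((c_{\varepsilon}^{+})^{-})^{2}dx$ and $\int((c_{\varepsilon}^{\pm})^{-})^{3}dx$; these are neither surface integrals nor of the form $\int(c_{\varepsilon}^{+})^{-}(c_{\varepsilon}^{-})^{-}dx$, and they are not dominated by $\|(c_{\varepsilon}^{+})^{-}\|_{L^{2}}^{2}+\|(c_{\varepsilon}^{-})^{-}\|_{L^{2}}^{2}$ unless one already has an $L^{\infty}$ bound (circular) or carries out an interpolation/absorption into the diffusion term that you do not supply. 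So the quadratic Gr\"onwall inequality you assert does not follow as stated.

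The boundedness step has a more basic problem: the claimed cancellation is not the mechanism that works here. Testing the equation for $u_{\varepsilon}=c_{\varepsilon}^{+}+c_{\varepsilon}^{-}$ with $(u_{\varepsilon}-M)^{+}$ leaves the drift $\varepsilon^{\gamma}\int(c_{\varepsilon}^{+}-c_{\varepsilon}^{-})\nabla\Phi_{\varepsilon}\cdot\nabla(u_{\varepsilon}-M)^{+}dx$, and no companion term $-\varepsilon^{\gamma-\alpha}\int(c_{\varepsilon}^{+}-c_{\varepsilon}^{-})^{2}(u_{\varepsilon}-M)^{+}dx$ appears to cancel it; the favourable structure in Nernst--Planck--Poisson systems comes from testing \emph{each} species equation with a nondecreasing function of \emph{that} species and adding, so that after the Poisson substitution the drift contributions combine into $\varepsilon^{\gamma-\alpha}\int(c_{\varepsilon}^{+}-c_{\varepsilon}^{-})\big(F(c_{\varepsilon}^{+})-F(c_{\varepsilon}^{-})\big)dx\ge0$ and, by $(\text{A}_{3})$, the reactions likewise give a sign; testing the summed equation destroys exactly this monotone pairing. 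Your fallback is also unavailable: from $c_{\varepsilon}^{\pm}\in L^{\infty}(0,T;L^{2})$ elliptic regularity for \eqref{eq:haha4} yields at best $H^{2}$, hence $\nabla\Phi_{\varepsilon}\in L^{6}$ in $d=3$, not $L^{\infty}$, and in any case $W^{2,p}$ estimates on $\Omega^{\varepsilon}$ carry constants depending on the oscillating boundary $\Gamma^{\varepsilon}$, so the $\varepsilon$-uniformity you need is precisely what would have to be proved (the extension operator of \cite{HJ91} gives uniform $H^{1}$-extensions, not uniform second-order regularity up to $\Gamma^{\varepsilon}$). Together with the unresolved $\varepsilon^{-\alpha}$ versus $\varepsilon^{\gamma}$ bookkeeping, the De Giorgi/Moser iteration is announced but not closed, whereas the cited proof in \cite{RMK12} closes it through the monotone test-function structure just described.
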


The proof of Theorem \ref{thm:positive+bound} can be found in \cite{RMK12} (see Theorems 3.3 and 3.4) and \cite{ray2013thesis}.
\begin{thm}
	\emph{\textbf{\emph{A priori} estimates}}\label{thm:aprioriestimate-N}
	
	Assume $\left(\mbox{A}_{1}\right)$-$\left(\mbox{A}_{6}\right)$.
	The following \emph{a priori} estimates hold:
	
	For the electrostatic potential, we
	have
	\begin{equation}
	\varepsilon^{\alpha}\left\Vert \Phi_{\varepsilon}\right\Vert _{L^{2}\left(0,T;H^{1}\left(\Omega^{\varepsilon}\right)\right)}\le C.
	\label{eq:thm2.4-1}
	\end{equation}
	If $\beta\ge\alpha$, it holds
	\begin{equation}
	\left\Vert v_{\varepsilon}\right\Vert _{L^{2}\left(\left(0,T\right)\times\Omega^{\varepsilon}\right)}+\varepsilon\left\Vert \nabla v_{\varepsilon}\right\Vert _{L^{2}\left(\left(0,T\right)\times\Omega^{\varepsilon}\right)}\le C,
	\label{eq:thm2.4-2}
	\end{equation}
	and additionally, if $\gamma\ge\alpha$, it holds
	\begin{align}
	& \max_{t\in\left[0,T\right]}\left\Vert c_{\varepsilon}^{-}\right\Vert _{L^{2}\left(\Omega^{\varepsilon}\right)}+\max_{t\in\left[0,T\right]}\left\Vert c_{\varepsilon}^{+}\right\Vert _{L^{2}\left(\Omega^{\varepsilon}\right)}+\left\Vert \nabla c_{\varepsilon}^{-}\right\Vert _{L^{2}\left(\left(0,T\right)\times\Omega^{\varepsilon}\right)}+\left\Vert \nabla c_{\varepsilon}^{+}\right\Vert _{L^{2}\left(\left(0,T\right)\times\Omega^{\varepsilon}\right)} \nonumber \\
	& +\left\Vert \partial_{t}c_{\varepsilon}^{-}\right\Vert _{L^{2}\left(0,T;\left(H^{1}\left(\Omega^{\varepsilon}\right)\right)'\right)}+\left\Vert \partial_{t}c_{\varepsilon}^{+}\right\Vert _{L^{2}\left(0,T;\left(H^{1}\left(\Omega^{\varepsilon}\right)\right)'\right)}\le C.
	\label{eq:thm2.4-3}
	\end{align}
\end{thm}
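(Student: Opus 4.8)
The plan is to run the classical energy method on the weak formulation of $\left(P^{\varepsilon}\right)$ of Definition \ref{def:mathbb=00007BPeps=00007D}, testing each equation with its own unknown and absorbing the coupling terms by combining the divergence-free constraint, the boundary conditions \eqref{eq:haha7} and \eqref{eq:boundN}, and the $\varepsilon$-uniform functional inequalities available on the perforated geometry described in Section \ref{subsec:A-geometrical-interpretation}: the Poincaré inequality $\left\Vert u\right\Vert _{L^{2}(\Omega^{\varepsilon})}\le C\varepsilon\left\Vert \nabla u\right\Vert _{L^{2}(\Omega^{\varepsilon})}$ for $u\in H_{0}^{1}(\Omega^{\varepsilon})$ (valid since such $u$ vanishes on the solid inclusions, which occupy a fixed fraction of every $\varepsilon$-cell), the Poincaré--Wirtinger inequality with $\varepsilon$-independent constant for functions of zero mean over $\Omega^{\varepsilon}$ (a standard consequence of the uniform extension operator recalled above), the $\varepsilon$-scaled trace inequality $\varepsilon\left\Vert u\right\Vert _{L^{2}(\Gamma^{\varepsilon})}^{2}\le C(\left\Vert u\right\Vert _{L^{2}(\Omega^{\varepsilon})}^{2}+\varepsilon^{2}\left\Vert \nabla u\right\Vert _{L^{2}(\Omega^{\varepsilon})}^{2})$, and the surface-measure estimate $\left|\Gamma^{\varepsilon}\right|\le C\varepsilon^{-1}$. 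Throughout, the uniform bound $\left\Vert c_{\varepsilon}^{\pm}\right\Vert _{L^{\infty}((0,T)\times\Omega^{\varepsilon})}\le C$ from Theorem \ref{thm:positive+bound} is used freely.

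\textbf{Potential and velocity.} Testing the weak Poisson equation with $\varphi_{2}=\Phi_{\varepsilon}$ and inserting \eqref{eq:boundN} gives $\varepsilon^{\alpha}\left\Vert \nabla\Phi_{\varepsilon}\right\Vert _{L^{2}(\Omega^{\varepsilon})}^{2}=\int_{\Omega^{\varepsilon}}(c_{\varepsilon}^{+}-c_{\varepsilon}^{-})\Phi_{\varepsilon}\,dx+\varepsilon\sigma\int_{\Gamma^{\varepsilon}}\Phi_{\varepsilon}\,dS_{\varepsilon}$. By $\left(\mbox{A}_{5}\right)$ and Poincaré--Wirtinger the bulk term is $\le C\left\Vert \nabla\Phi_{\varepsilon}\right\Vert _{L^{2}(\Omega^{\varepsilon})}$, while the boundary term is $\le\varepsilon\left|\Gamma^{\varepsilon}\right|^{1/2}\left\Vert \Phi_{\varepsilon}\right\Vert _{L^{2}(\Gamma^{\varepsilon})}\le C\varepsilon^{1/2}\varepsilon^{-1/2}\left\Vert \nabla\Phi_{\varepsilon}\right\Vert _{L^{2}(\Omega^{\varepsilon})}=C\left\Vert \nabla\Phi_{\varepsilon}\right\Vert _{L^{2}(\Omega^{\varepsilon})}$ after the scaled trace inequality and $\left|\Gamma^{\varepsilon}\right|\le C\varepsilon^{-1}$. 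Dividing out yields $\left\Vert \nabla\Phi_{\varepsilon}(t)\right\Vert _{L^{2}(\Omega^{\varepsilon})}\le C\varepsilon^{-\alpha}$ uniformly in $t$, which with Poincaré--Wirtinger and integration in $t$ is \eqref{eq:thm2.4-1}. Next, testing the momentum equation with $\varphi_{1}=v_{\varepsilon}$ kills the pressure by $\nabla\cdot v_{\varepsilon}=0$, so $\varepsilon^{2}\left\Vert \nabla v_{\varepsilon}\right\Vert _{L^{2}(\Omega^{\varepsilon})}^{2}=-\varepsilon^{\beta}\int_{\Omega^{\varepsilon}}(c_{\varepsilon}^{+}-c_{\varepsilon}^{-})\nabla\Phi_{\varepsilon}\cdot v_{\varepsilon}\,dx\le C\varepsilon^{\beta}\left\Vert \nabla\Phi_{\varepsilon}\right\Vert _{L^{2}(\Omega^{\varepsilon})}\left\Vert v_{\varepsilon}\right\Vert _{L^{2}(\Omega^{\varepsilon})}$; the perforated Poincaré inequality then gives $\left\Vert \nabla v_{\varepsilon}(t)\right\Vert _{L^{2}(\Omega^{\varepsilon})}\le C\varepsilon^{\beta-1}\left\Vert \nabla\Phi_{\varepsilon}(t)\right\Vert _{L^{2}(\Omega^{\varepsilon})}\le C\varepsilon^{\beta-1-\alpha}$, so $\varepsilon\left\Vert \nabla v_{\varepsilon}\right\Vert _{L^{2}(\Omega^{\varepsilon})}\le C\varepsilon^{\beta-\alpha}$ and $\left\Vert v_{\varepsilon}\right\Vert _{L^{2}(\Omega^{\varepsilon})}\le C\varepsilon^{\beta-\alpha+1}$, both bounded precisely when $\beta\ge\alpha$. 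Integrating over $(0,T)$ gives \eqref{eq:thm2.4-2}.

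\textbf{Concentrations and their time derivatives.} Testing the $c_{\varepsilon}^{\pm}$-equation with $\varphi_{3}=c_{\varepsilon}^{\pm}$, the convection term vanishes after integration by parts since $\nabla\cdot v_{\varepsilon}=0$ and $v_{\varepsilon}=0$ on $\Gamma^{\varepsilon}\cup\partial\Omega$, the diffusion term gives $\left\Vert \nabla c_{\varepsilon}^{\pm}\right\Vert _{L^{2}(\Omega^{\varepsilon})}^{2}$, and the drift term, rewritten as $\pm\tfrac{\varepsilon^{\gamma}}{2}\int_{\Omega^{\varepsilon}}\nabla\Phi_{\varepsilon}\cdot\nabla(c_{\varepsilon}^{\pm})^{2}\,dx$ and integrated by parts once more, becomes, after substituting $-\varepsilon^{\alpha}\Delta\Phi_{\varepsilon}=c_{\varepsilon}^{+}-c_{\varepsilon}^{-}$ and using \eqref{eq:haha7} and \eqref{eq:boundN} on the boundary, $\pm\tfrac{\varepsilon^{\gamma-\alpha}}{2}\int_{\Omega^{\varepsilon}}(c_{\varepsilon}^{+}-c_{\varepsilon}^{-})(c_{\varepsilon}^{\pm})^{2}\,dx+\tfrac{\sigma}{2}\varepsilon^{\gamma+1-\alpha}\int_{\Gamma^{\varepsilon}}(c_{\varepsilon}^{\pm})^{2}\,dS_{\varepsilon}$; the first piece is $\le C\varepsilon^{\gamma-\alpha}\left\Vert c_{\varepsilon}^{\pm}\right\Vert _{L^{2}(\Omega^{\varepsilon})}^{2}$ and the second, by the scaled trace inequality, is $\le C\varepsilon^{\gamma-\alpha}(\left\Vert c_{\varepsilon}^{\pm}\right\Vert _{L^{2}(\Omega^{\varepsilon})}^{2}+\varepsilon^{2}\left\Vert \nabla c_{\varepsilon}^{\pm}\right\Vert _{L^{2}(\Omega^{\varepsilon})}^{2})$, so for $\gamma\ge\alpha$ and $\varepsilon$ small the residual $\varepsilon^{2}\left\Vert \nabla c_{\varepsilon}^{\pm}\right\Vert _{L^{2}(\Omega^{\varepsilon})}^{2}$ is absorbed into the dissipation. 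By $\left(\mbox{A}_{3}\right)$ the reaction term equals $\mp\int_{\Omega^{\varepsilon}}(c_{\varepsilon}^{+}-c_{\varepsilon}^{-})c_{\varepsilon}^{\pm}\,dx$, bounded by $C(\left\Vert c_{\varepsilon}^{+}\right\Vert _{L^{2}(\Omega^{\varepsilon})}^{2}+\left\Vert c_{\varepsilon}^{-}\right\Vert _{L^{2}(\Omega^{\varepsilon})}^{2})$. Adding the $+$ and $-$ identities produces a differential inequality $\tfrac{d}{dt}(\left\Vert c_{\varepsilon}^{+}\right\Vert _{L^{2}}^{2}+\left\Vert c_{\varepsilon}^{-}\right\Vert _{L^{2}}^{2})+(\left\Vert \nabla c_{\varepsilon}^{+}\right\Vert _{L^{2}}^{2}+\left\Vert \nabla c_{\varepsilon}^{-}\right\Vert _{L^{2}}^{2})\le C(\left\Vert c_{\varepsilon}^{+}\right\Vert _{L^{2}}^{2}+\left\Vert c_{\varepsilon}^{-}\right\Vert _{L^{2}}^{2})$, and Grönwall together with $\left(\mbox{A}_{1}\right)$ (uniform bound on the initial data) yields the $\max_{t}$-bounds and, after time integration, the bounds on $\left\Vert \nabla c_{\varepsilon}^{\pm}\right\Vert _{L^{2}((0,T)\times\Omega^{\varepsilon})}$. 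Finally, for $\varphi_{3}\in H^{1}(\Omega^{\varepsilon})$ with $\left\Vert \varphi_{3}\right\Vert _{H^{1}(\Omega^{\varepsilon})}\le1$, estimating each flux term in the weak equation by already-controlled quantities ($\left\Vert v_{\varepsilon}\right\Vert _{L^{2}}$ for the convective flux via $\left\Vert v_{\varepsilon}c_{\varepsilon}^{\pm}\right\Vert _{L^{2}}\le\left\Vert c_{\varepsilon}^{\pm}\right\Vert _{L^{\infty}}\left\Vert v_{\varepsilon}\right\Vert _{L^{2}}$, $\left\Vert \nabla c_{\varepsilon}^{\pm}\right\Vert _{L^{2}}$ for the diffusive flux, $\varepsilon^{\gamma}\left\Vert \nabla\Phi_{\varepsilon}\right\Vert _{L^{2}}\le C\varepsilon^{\gamma-\alpha}\le C$ for the drift flux, and $O(1)$ for the reaction) gives $\left\Vert \partial_{t}c_{\varepsilon}^{\pm}\right\Vert _{(H^{1}(\Omega^{\varepsilon}))'}\le C(\left\Vert v_{\varepsilon}\right\Vert _{L^{2}}+\left\Vert \nabla c_{\varepsilon}^{\pm}\right\Vert _{L^{2}}+1)$; squaring and integrating over $(0,T)$ completes \eqref{eq:thm2.4-3}.

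\textbf{Main obstacle.} The delicate point is the nonlinear drift term in the concentration estimate: it carries the coupling among all three unknowns and dictates the scaling restrictions. The two successive integrations by parts followed by the substitution of the Poisson equation are what turn it into something controllable, and the boundary integral over $\Gamma^{\varepsilon}$ produced in the process must be tamed by the $\varepsilon$-scaled trace inequality, with the leftover $\varepsilon^{2}\left\Vert \nabla c_{\varepsilon}^{\pm}\right\Vert _{L^{2}}^{2}$ absorbed into the diffusion --- an absorption that both forces $\varepsilon$ to be small and makes transparent why $\gamma\ge\alpha$ (on top of $\beta\ge\alpha$ for the velocity) is exactly the hypothesis needed. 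Keeping the powers of $\varepsilon$ bookkept consistently across the three steps, and making sure every perforated-domain inequality is applied with an $\varepsilon$-independent constant, is the other place where care is required.
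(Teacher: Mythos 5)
The paper itself gives no proof of this theorem --- it defers to Theorem 3.5 of \cite{RMK12} and to \cite{ray2013thesis} --- and your argument (testing each equation with its own unknown, killing the pressure via $\nabla\cdot v_{\varepsilon}=0$, handling the drift through the Poisson equation and the boundary data, and invoking the $\varepsilon$-uniform Poincar\'e, Poincar\'e--Wirtinger and scaled trace inequalities on the perforated domain together with the $L^{\infty}$ bound of Theorem \ref{thm:positive+bound}) is exactly the standard energy-method route taken in that reference, and it is correct. Two cosmetic points only: Poincar\'e gives $\left\Vert v_{\varepsilon}\right\Vert _{L^{2}\left(\Omega^{\varepsilon}\right)}\le C\varepsilon^{\beta-\alpha}$ rather than $C\varepsilon^{\beta-\alpha+1}$ (harmless, since $\beta\ge\alpha$ still yields boundedness), and absorbing $C\varepsilon^{\gamma-\alpha}\varepsilon^{2}\left\Vert \nabla c_{\varepsilon}^{\pm}\right\Vert _{L^{2}\left(\Omega^{\varepsilon}\right)}^{2}$ into the dissipation requires the usual proviso that $\varepsilon$ be sufficiently small, which does not affect the uniformity of the constants.
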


The proof of Theorem \ref{thm:aprioriestimate-N} can be found in \cite{RMK12} (see Theorem 3.5) and \cite{ray2013thesis}.
\begin{thm}
	\emph{\textbf{Homogenization of ($P_{N}^{\varepsilon}$)}}\label{thm:homoNeumann}
	
	Let the \emph{a priori} estimates \eqref{eq:thm2.4-1}-\eqref{eq:thm2.4-3} of Theorem \ref{thm:aprioriestimate-N}
	be valid. Taking $\tilde{\Phi}_{\varepsilon}:=\varepsilon^{\alpha}\Phi_{\varepsilon}$,
	there exist functions $\tilde{\Phi}_{0}\in L^{2}\left(0,T;H^{1}\left(\Omega\right)\right)$
	and $\tilde{\Phi}_{1}\in L^{2}\left(\left(0,T\right)\times\Omega;H_{\#}^{1}\left(Y\right)\right)$
	such that, up to a subsequence, we have
	\begin{align*}
	& \tilde{\Phi}_{\varepsilon}\stackrel{2}{\rightharpoonup}\tilde{\Phi}_{0},\\
	& \nabla\tilde{\Phi}_{\varepsilon}\stackrel{2}{\rightharpoonup}\nabla_{x}\tilde{\Phi}_{0}+\nabla_{y}\tilde{\Phi}_{1}.
	\end{align*}
	
	If $\beta\ge\alpha$, then there exist functions $v_{0}\in L^{2}\left(\left(0,T\right)\times\Omega;H_{\#}^{1}\left(Y\right)\right)$
	and $p_{0}\in L^{2}\left(\left(0,T\right)\times\Omega\times Y\right)$
	such that, up to a subsequence, we have
	\begin{align*}
	& v_{\varepsilon}\stackrel{2}{\rightharpoonup}v_{0},\\
	& \varepsilon\nabla v_{\varepsilon}\stackrel{2}{\rightharpoonup}\nabla_{y}v_{0},\\
	& p_{\varepsilon}\stackrel{2}{\rightharpoonup}p_{0}.
	\end{align*}
	
	Moreover, the convergence for the pressure is strong in $L^{2}\left(\Omega\right)/\mathbb{R}$.
	
	If $\gamma\ge\alpha$, then there exist functions $c_{0}^{\pm}\in L^{2}\left(0,T;H^{1}\left(\Omega\right)\right)$
	and $c_{1}^{\pm}\in L^{2}\left(\left(0,T\right)\times\Omega;H_{\#}^{1}\left(Y\right)\right)$
	such that, up to a subsequence, we have
	\begin{align*}
	& c_{\varepsilon}^{\pm}\to c_{0}^{\pm}\;\text{ strongly in }\;L^{2}\left(\left(0,T\right)\times\Omega\right),\\
	& \nabla c_{\varepsilon}^{\pm}\stackrel{2}{\rightharpoonup}\nabla_{x}c_{0}^{\pm}+\nabla_{y}c_{1}^{\pm}.
	\end{align*}
\end{thm}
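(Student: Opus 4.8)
The statement merely collects the compactness consequences of the $\varepsilon$-uniform bounds in Theorem~\ref{thm:aprioriestimate-N}, so the proof is a combination of the two-scale compactness machinery recalled above with an Aubin--Lions--Simon argument carried out on an extension of the perforated domain. First I would set up the extensions. Since the microstructure is admissible, the interior extension operator $E_{\varepsilon}\colon H^{1}(\Omega^{\varepsilon})\to H^{1}(\Omega)$ of \cite{HJ91} has $\varepsilon$-independent norm; I use it to extend $\tilde{\Phi}_{\varepsilon}$ and $c_{\varepsilon}^{\pm}$. The velocity $v_{\varepsilon}$ is extended by zero across the solid part, which is licit because $v_{\varepsilon}$ vanishes on $\Gamma^{\varepsilon}$. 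The pressure $p_{\varepsilon}$, being only in $L^{2}$, is extended by the Tartar--Allaire restriction/duality construction: there is a bounded operator $R_{\varepsilon}\colon[H_{0}^{1}(\Omega)]^{d}\to[H_{0}^{1}(\Omega^{\varepsilon})]^{d}$ preserving divergence-free fields and satisfying $\|R_{\varepsilon}\varphi\|_{L^{2}}+\varepsilon\|\nabla R_{\varepsilon}\varphi\|_{L^{2}}\le C\|\varphi\|_{H^{1}}$; testing the momentum balance against $R_{\varepsilon}\varphi$ and combining \eqref{eq:thm2.4-1}--\eqref{eq:thm2.4-2} yields a uniform bound for the extended pressure $\tilde{p}_{\varepsilon}$ in $L^{\infty}(0,T;L^{2}(\Omega)/\mathbb{R})$ (this bound is not part of Theorem~\ref{thm:aprioriestimate-N} and has to be recovered here).

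\textbf{Two-scale limits of $\tilde{\Phi}_{\varepsilon}$ and of the Stokes pair.} From the extension and \eqref{eq:thm2.4-1}, the family $\tilde{\Phi}_{\varepsilon}$ is bounded in $L^{2}(0,T;H^{1}(\Omega))$, so the two-scale compactness theorem gives $\tilde{\Phi}_{\varepsilon}\stackrel{2}{\rightharpoonup}\tilde{\Phi}_{0}$ with $\tilde{\Phi}_{0}\in L^{2}(0,T;H^{1}(\Omega))$ independent of $y$, and $\nabla\tilde{\Phi}_{\varepsilon}\stackrel{2}{\rightharpoonup}\nabla_{x}\tilde{\Phi}_{0}+\nabla_{y}\tilde{\Phi}_{1}$ with $\tilde{\Phi}_{1}\in L^{2}((0,T)\times\Omega;H_{\#}^{1}(Y))$. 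For the velocity, \eqref{eq:thm2.4-2} gives $v_{\varepsilon}\stackrel{2}{\rightharpoonup}v_{0}$ and $\varepsilon\nabla v_{\varepsilon}\stackrel{2}{\rightharpoonup}\nabla_{y}v_{0}$; testing with $\varphi(t,x,x/\varepsilon)$ and using the bound on $\varepsilon\nabla v_{\varepsilon}$ shows that $v_{0}$ genuinely depends on $y$, lies in $L^{2}((0,T)\times\Omega;H_{\#}^{1}(Y))$, and, via the incompressibility constraint and the no-slip condition on $\Gamma^{\varepsilon}$, is divergence-free in $y$ and supported in $\Omega\times Y_{l}$. The pressure bound from the previous step gives $p_{\varepsilon}\stackrel{2}{\rightharpoonup}p_{0}$; as in Allaire's treatment of Stokes flow through a connected porous medium (see also \cite{RMK12}), one identifies $\nabla_{y}p_{0}=0$ from the momentum equation, so $p_{0}=p_{0}(t,x)$, and upgrades the weak convergence of $\tilde{p}_{\varepsilon}$ to strong convergence in $L^{2}(\Omega)/\mathbb{R}$ through the energy identity together with uniqueness of the limit problem.

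\textbf{Strong convergence of $c_{\varepsilon}^{\pm}$.} Set $\hat{c}_{\varepsilon}^{\pm}:=E_{\varepsilon}c_{\varepsilon}^{\pm}$, uniformly bounded in $L^{2}(0,T;H^{1}(\Omega))$ by \eqref{eq:thm2.4-3}. Since $\partial_{t}\hat{c}_{\varepsilon}^{\pm}=E_{\varepsilon}\partial_{t}c_{\varepsilon}^{\pm}$, using the weak formulation, the $L^{\infty}$ bound of Theorem~\ref{thm:positive+bound}, and the a priori bounds on $v_{\varepsilon}$ and $\nabla\tilde{\Phi}_{\varepsilon}$ — here the scaling conditions $\beta\ge\alpha$ and $\gamma\ge\alpha$ enter, so that the convective and drift fluxes $v_{\varepsilon}c_{\varepsilon}^{\pm}$ and $\varepsilon^{\gamma}c_{\varepsilon}^{\pm}\nabla\Phi_{\varepsilon}=\varepsilon^{\gamma-\alpha}c_{\varepsilon}^{\pm}\nabla\tilde{\Phi}_{\varepsilon}$ stay bounded — one derives a uniform bound for $\partial_{t}\hat{c}_{\varepsilon}^{\pm}$ in $L^{2}(0,T;(H^{1}(\Omega))')$. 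The Aubin--Lions--Simon lemma, with the compact embedding $H^{1}(\Omega)\hookrightarrow L^{2}(\Omega)$ and $L^{2}(\Omega)\hookrightarrow(H^{1}(\Omega))'$, then yields $\hat{c}_{\varepsilon}^{\pm}\to c_{0}^{\pm}$ strongly in $L^{2}((0,T)\times\Omega)$, hence $c_{\varepsilon}^{\pm}\to c_{0}^{\pm}$ strongly in $L^{2}((0,T)\times\Omega^{\varepsilon})$. The second part of the two-scale compactness theorem applied to $\hat{c}_{\varepsilon}^{\pm}$ gives $\nabla c_{\varepsilon}^{\pm}\stackrel{2}{\rightharpoonup}\nabla_{x}c_{0}^{\pm}+\nabla_{y}c_{1}^{\pm}$ with $c_{0}^{\pm}\in L^{2}(0,T;H^{1}(\Omega))$ ($y$-independent because the extensions sit in $H^{1}(\Omega)$) and $c_{1}^{\pm}\in L^{2}((0,T)\times\Omega;H_{\#}^{1}(Y))$. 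Extracting a single common subsequence along which all the above convergences hold completes the argument.

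\textbf{Main obstacle.} The genuinely delicate ingredient is the pressure: recovering the $\varepsilon$-uniform $L^{2}(\Omega)$ bound through the restriction operator and the weak momentum balance, and then promoting weak to strong convergence in $L^{2}(\Omega)/\mathbb{R}$ by the Allaire-type argument specific to Stokes flow through a connected perforated domain. A secondary but real technical point is the transfer of the time-derivative bound for $c_{\varepsilon}^{\pm}$ from $(H^{1}(\Omega^{\varepsilon}))'$ to $(H^{1}(\Omega))'$ via the extension operator, which is precisely where the uniform $L^{\infty}$ bound of Theorem~\ref{thm:positive+bound} and the scaling restrictions $\beta\ge\alpha$, $\gamma\ge\alpha$ are indispensable.
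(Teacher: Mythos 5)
Your strategy coincides with the one used for this theorem in the literature the paper points to: the paper itself offers no proof of Theorem \ref{thm:homoNeumann}, deferring to Theorems 4.5--4.10 of \cite{RMK12} and to \cite{ray2013thesis}, and that proof is exactly the combination you describe — interior extension \cite{HJ91} plus two-scale compactness for $\tilde{\Phi}_{\varepsilon}$, the Allaire-type treatment $v_{\varepsilon}\stackrel{2}{\rightharpoonup}v_{0}$, $\varepsilon\nabla v_{\varepsilon}\stackrel{2}{\rightharpoonup}\nabla_{y}v_{0}$, the Tartar restriction operator to manufacture the missing uniform pressure bound and then the strong $L^{2}\left(\Omega\right)/\mathbb{R}$ convergence, and an Aubin--Lions/Simon argument for $c_{\varepsilon}^{\pm}$. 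So at the level of architecture your proposal is the intended one.

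The one step that, as written, does not hold up is the transfer of the time-derivative bound: the identity $\partial_{t}\hat{c}_{\varepsilon}^{\pm}=E_{\varepsilon}\partial_{t}c_{\varepsilon}^{\pm}$ is not meaningful, since $\partial_{t}c_{\varepsilon}^{\pm}$ lives only in $L^{2}\bigl(0,T;\bigl(H^{1}\left(\Omega^{\varepsilon}\right)\bigr)'\bigr)$ while $E_{\varepsilon}$ is defined on $H^{1}\left(\Omega^{\varepsilon}\right)$, and the dual bound does not pass to the fixed domain: for $\varphi\in H^{1}\left(\Omega\right)$ the pairing $\langle\partial_{t}\hat{c}_{\varepsilon}^{\pm},\varphi\rangle$ involves the values of $\hat{c}_{\varepsilon}^{\pm}$ inside the holes, where no equation is available, and the $L^{2}$-adjoint of $E_{\varepsilon}$ has no uniform $H^{1}\left(\Omega\right)\to H^{1}\left(\Omega^{\varepsilon}\right)$ bound, so a uniform estimate of $\partial_{t}\hat{c}_{\varepsilon}^{\pm}$ in $L^{2}\bigl(0,T;\bigl(H^{1}\left(\Omega\right)\bigr)'\bigr)$ cannot be read off from \eqref{eq:thm2.4-3}. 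The standard repair — and effectively what the cited proofs rely on — is to estimate time translates directly on the perforated domain, i.e. bound $\left\Vert c_{\varepsilon}^{\pm}\left(\cdot+h\right)-c_{\varepsilon}^{\pm}\right\Vert _{L^{2}\left(\left(0,T-h\right)\times\Omega^{\varepsilon}\right)}$ uniformly in $\varepsilon$ by testing the weak formulation with the difference itself (here the uniform $L^{\infty}$ bound of Theorem \ref{thm:positive+bound} and the scalings $\beta\ge\alpha$, $\gamma\ge\alpha$ enter, as you note), then use linearity and the uniform $L^{2}\to L^{2}$ bound of $E_{\varepsilon}$ to transfer the translate estimate to $\hat{c}_{\varepsilon}^{\pm}$ and conclude by the Simon/Kolmogorov compactness criterion. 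With that substitution your argument is complete; the rest (pressure via restriction operator and the Allaire-type strong convergence, identification of $p_{0}=p_{0}\left(t,x\right)$, and the two-scale limits of the gradients) is as in the cited references.
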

\begin{thm}
	\emph{\textbf{Strong formulation of the macroscopic problem in the Neumann case - $(P^0_{N})$}}\label{thm:macroNeumann}
	
	Let $\left(v_{\varepsilon},p_{\varepsilon},\Phi_{\varepsilon},c_{\varepsilon}^{\pm}\right)$
	be a weak solution of $\left(P^{\varepsilon}\right)$ in the sense
	of Definition \ref{def:mathbb=00007BPeps=00007D}. According to Theorem \ref{thm:homoNeumann}, we have the
	following results:
	
	Let $\tilde{\Phi}_{0}$ be the two-scale limit of the electrostatic
		potential $\tilde{\Phi}_{\varepsilon}$, it then satisfies the following
		macroscopic system:
		\[
		\begin{cases}
		-\nabla_{x}\cdot\left(\mathbb{D}\nabla_{x}\tilde{\Phi}_{0}\left(t,x\right)\right)=\bar{\sigma}+\left|Y_{l}\right|\left(c_{0}^{+}\left(t,x\right)-c_{0}^{-}\left(t,x\right)\right) & \text{in }\;\left(0,T\right)\times\Omega,\\
		\mathbb{D}\nabla_{x}\tilde{\Phi}_{0}\left(t,x\right)\cdot\text{n}=0 & \text{on }\;\left(0,T\right)\times\partial\Omega,
		\end{cases}
		\]
		where $\bar{\sigma}:=\int_{\Gamma}\sigma dS_{y}$ and the permittivity/diffusion
		tensor $\mathbb{D}$ is defined in Theorem \ref{thm:auxiliary}.
		
		Let $v_{0}$ be the two-scale limit of the velocity field $v_{\varepsilon}$.
		With additionally $\beta\ge\alpha$, it then satisfies the following
		macroscopic system:
		\[
		\begin{cases}
		\bar{v}_{0}\left(t,x\right)+\mathbb{K}\nabla_{x}p_{0}\left(t,x\right)=-\mathbb{K}\left(c_{0}^{+}-c_{0}^{-}\right)\nabla_{x}\tilde{\Phi}_{0}\left(t,x\right) & \text{in }\;\left(0,T\right)\times\Omega,\;\text{if }\;\beta=\alpha,\\
		\bar{v}_{0}\left(t,x\right)+\mathbb{K}\nabla_{x}p_{0}\left(t,x\right)=0 & \text{in }\;\left(0,T\right)\times\Omega,\;\text{if }\;\beta>\alpha,\\
		\nabla_{x}\cdot\bar{v}_{0}\left(t,x\right)=0 & \text{in }\;\left(0,T\right)\times\Omega,\\
		\bar{v}_{0}\left(t,x\right)\cdot\text{n}=0 & \text{on }\;\left(0,T\right)\times\partial\Omega,
		\end{cases}
		\]
		where $\bar{v}_{0}\left(t,x\right)=\int_{Y_{l}}v_{0}\left(t,x,y\right)dy$
		and the permeability tensor $\mathbb{K}$ is defined in Theorem \ref{thm:auxiliary}.
		
		Let $c_{0}^{\pm}$ be the two-scale limits of the concentration fields
		$c_{\varepsilon}^{\pm}$. With $\gamma=\alpha$, they satisfy the
		following macroscopic system:
		\[
		\begin{cases}
		\left|Y_{l}\right|\partial_{t}c_{0}^{\pm}\left(t,x\right)+\nabla_{x}\cdot\left[c_{0}^{\pm}\left(t,x\right)\left(\bar{v}_{0}\mp\mathbb{D}\nabla_{x}\tilde{\Phi}_{0}\right)-\mathbb{D}\nabla_{x}c_{0}^{\pm}\left(t,x\right)\right]\\
		=\left|Y_{l}\right|R_{0}^{\pm}\left(c_{0}^{+}\left(t,x\right),c_{0}^{-}\left(t,x\right)\right) & \text{in }\;\left(0,T\right)\times\Omega,\\
		\left(c_{0}^{\pm}\left(t,x\right)\left(\bar{v}_{0}\left(t,x\right)\mp\mathbb{D}\nabla_{x}\tilde{\Phi}_{0}\left(t,x\right)\right)-\mathbb{D}\nabla_{x}c_{0}^{\pm}\left(t,x\right)\right)\cdot\text{n}=0 & \text{on }\;\left(0,T\right)\times\partial\Omega,
		\end{cases}
		\]
		while with $\gamma>\alpha$, they satisfy
		\[
		\begin{cases}
		\left|Y_{l}\right|\partial_{t}c_{0}^{\pm}\left(t,x\right)+\nabla_{x}\cdot\left[c_{0}^{\pm}\left(t,x\right)\bar{v}_{0}\left(t,x\right)-\mathbb{D}\nabla_{x}c_{0}^{\pm}\left(t,x\right)\right]\\
		=\left|Y_{l}\right|R_{0}^{\pm}\left(c_{0}^{+}\left(t,x\right),c_{0}^{-}\left(t,x\right)\right) & \text{in }\;\left(0,T\right)\times\Omega,\\
		\left(c_{0}^{\pm}\left(t,x\right)\bar{v}_{0}\left(t,x\right)-\mathbb{D}\nabla_{x}c_{0}^{\pm}\left(t,x\right)\right)\cdot\text{n}=0 & \text{on }\;\left(0,T\right)\times\partial\Omega.
		\end{cases}
		\]
\end{thm}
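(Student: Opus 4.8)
The plan is to pass to the two-scale limit in the weak formulation of $(P^{\varepsilon})$ from Definition~\ref{def:mathbb=00007BPeps=00007D}, using the convergences granted by Theorem~\ref{thm:homoNeumann}, and then to unfold the two-scale limit problem by an appropriate choice of test functions. I would treat the three blocks (Poisson, Stokes, Nernst--Planck) separately, since each requires a slightly different argument.

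\emph{Poisson equation.} First I would test the weak formulation of the rescaled Poisson equation $\int_{\Omega^{\varepsilon}}\nabla\tilde\Phi_{\varepsilon}\cdot\nabla\varphi_2\,dx-\int_{\Gamma^{\varepsilon}}\varepsilon\sigma\varphi_2\,dS_{\varepsilon}=\int_{\Omega^{\varepsilon}}(c_{\varepsilon}^{+}-c_{\varepsilon}^{-})\varphi_2\,dx$ with oscillating test functions of the form $\varphi_2(t,x)=\psi_0(t,x)+\varepsilon\psi_1(t,x,x/\varepsilon)$ with $\psi_0\in C_0^\infty$, $\psi_1\in C_0^\infty(\cdot;C^\infty_\#(Y))$. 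Using $\nabla\tilde\Phi_\varepsilon\stackrel{2}{\rightharpoonup}\nabla_x\tilde\Phi_0+\nabla_y\tilde\Phi_1$, the surface two-scale convergence to handle $\int_{\Gamma^\varepsilon}\varepsilon\sigma\varphi_2\,dS_\varepsilon\to\frac{1}{|Y|}\int_\Omega\int_\Gamma\sigma\psi_0\,dS_y\,dx$, and the strong $L^2$ convergence $c_\varepsilon^\pm\to c_0^\pm$, I obtain the two-scale variational identity over $\Omega\times Y_l$. Choosing $\psi_0=0$ first yields the cell equation $-\Delta_y\tilde\Phi_1=0$ in $Y_l$ together with the boundary relation on $\Gamma$, whence $\tilde\Phi_1(t,x,y)=\sum_j\partial_{x_j}\tilde\Phi_0(t,x)\,\varphi_j(y)$ with $\varphi_j$ the cell functions of \eqref{eq:cellproblem1}; then choosing $\psi_1=0$ and integrating in $y$ collapses the $y$-integral into the homogenized tensor $\mathbb{D}$, producing $-\nabla_x\cdot(\mathbb{D}\nabla_x\tilde\Phi_0)=\bar\sigma+|Y_l|(c_0^+-c_0^-)$ together with the homogeneous Neumann condition on $\partial\Omega$ (the latter because no boundary term on $\partial\Omega$ survives, as \eqref{eq:haha7} is homogeneous). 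Density of such $(\psi_0,\psi_1)$ in the relevant spaces closes the argument.

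\emph{Stokes equation.} For the velocity/pressure block I would test $\int_{\Omega^\varepsilon}(\varepsilon^2\nabla v_\varepsilon\cdot\nabla\varphi_1-p_\varepsilon\nabla\cdot\varphi_1)\,dx=-\int_{\Omega^\varepsilon}\varepsilon^\beta(c_\varepsilon^+-c_\varepsilon^-)\nabla\Phi_\varepsilon\cdot\varphi_1\,dx$ with $\varphi_1(t,x)=\varepsilon\,w(t,x,x/\varepsilon)$, $w\in C_0^\infty(\cdot;C^\infty_\#(Y_l))$ divergence-free in $y$ and vanishing near $\Gamma$, so that $\varepsilon^2\nabla\varphi_1\approx\varepsilon\nabla_y w$ matches $\varepsilon\nabla v_\varepsilon\stackrel{2}{\rightharpoonup}\nabla_y v_0$. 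On the right-hand side I would rewrite $\varepsilon^\beta(c_\varepsilon^+-c_\varepsilon^-)\nabla\Phi_\varepsilon=\varepsilon^{\beta-\alpha}(c_\varepsilon^+-c_\varepsilon^-)\nabla\tilde\Phi_\varepsilon$ and use the $L^2$-boundedness of $\nabla\tilde\Phi_\varepsilon$ from \eqref{eq:thm2.4-1}; in the limit this term vanishes when $\beta>\alpha$ and survives (after the $\varepsilon$ test-function factor) only in the balanced case $\beta=\alpha$, giving the dichotomy in the statement. The resulting $y$-cell problem is \eqref{eq:cellproblem2}, so $v_0(t,x,y)=\sum_j w_j(y)\big(f(t,x)\big)_j$ with $f$ either $-(c_0^+-c_0^-)\nabla_x\tilde\Phi_0-\nabla_x p_0$ or $-\nabla_x p_0$; integrating over $Y_l$ and using $K_{ij}=\int_{Y_l}w_j^i\,dy$ yields Darcy's law for $\bar v_0$. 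The incompressibility $\nabla_x\cdot\bar v_0=0$ and the no-flux condition $\bar v_0\cdot\mathrm{n}=0$ on $\partial\Omega$ come from passing to the limit in $\int_{\Omega^\varepsilon}v_\varepsilon\cdot\nabla\psi\,dx=0$ with $\psi=\psi(t,x)$, after averaging in $y$. Strong convergence of the pressure (granted by Theorem~\ref{thm:homoNeumann}) is what makes the $p_\varepsilon\nabla\cdot\varphi_1$ term behave correctly.

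\emph{Nernst--Planck equations.} Here I would test the weak concentration equation with $\varphi_3(t,x)=\psi_0(t,x)+\varepsilon\psi_1(t,x,x/\varepsilon)$. The time-derivative term passes to the limit as $|Y_l|\partial_t c_0^\pm$ against $\psi_0$; the diffusion term $\int_{\Omega^\varepsilon}\nabla c_\varepsilon^\pm\cdot\nabla\varphi_3\,dx$ uses $\nabla c_\varepsilon^\pm\stackrel{2}{\rightharpoonup}\nabla_x c_0^\pm+\nabla_y c_1^\pm$; the convective term $\int_{\Omega^\varepsilon}v_\varepsilon c_\varepsilon^\pm\cdot\nabla\varphi_3$ combines $v_\varepsilon\stackrel{2}{\rightharpoonup}v_0$ with the strong convergence $c_\varepsilon^\pm\to c_0^\pm$ (a product of weak-times-strong), after noting $c_0^\pm$ is $y$-independent; and the drift term $\varepsilon^\gamma c_\varepsilon^\pm\nabla\Phi_\varepsilon=\varepsilon^{\gamma-\alpha}c_\varepsilon^\pm\nabla\tilde\Phi_\varepsilon$ again vanishes when $\gamma>\alpha$ and survives when $\gamma=\alpha$, using $\nabla\tilde\Phi_\varepsilon\stackrel{2}{\rightharpoonup}\nabla_x\tilde\Phi_0+\nabla_y\tilde\Phi_1$ together with the already-identified $\tilde\Phi_1$. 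Taking $\psi_0=0$ first extracts the cell problem for $c_1^\pm$, which is again \eqref{eq:cellproblem1}-type and gives $c_1^\pm=\sum_j\varphi_j(y)\,\partial_{x_j}c_0^\pm$; then $\psi_1=0$ and $y$-integration produce the macroscopic flux $c_0^\pm(\bar v_0\mp\mathbb{D}\nabla_x\tilde\Phi_0)-\mathbb{D}\nabla_x c_0^\pm$ in the $\gamma=\alpha$ case and its drift-free analogue when $\gamma>\alpha$, with $R_0^\pm$ the limit of $R_\varepsilon^\pm$ (here just the linear form of $(\mathrm{A}_3)$, so $R_0^\pm=R^\pm$), and the homogeneous no-flux condition on $\partial\Omega$ inherited from the microscopic one.

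\emph{Main obstacle.} The delicate point is handling the coupled nonlinear terms in the passage to the limit: the electrostatic drift $\varepsilon^\gamma c_\varepsilon^\pm\nabla\Phi_\varepsilon$ and the Stokes forcing $\varepsilon^\beta(c_\varepsilon^+-c_\varepsilon^-)\nabla\Phi_\varepsilon$ are products of a (merely weakly two-scale convergent) gradient with a concentration, so one genuinely needs the \emph{strong} $L^2$ convergence of $c_\varepsilon^\pm$ (from the Aubin--Lions compactness behind Theorem~\ref{thm:homoNeumann}) to pass to the limit in the product, and one must track the exponent differences $\beta-\alpha$ and $\gamma-\alpha$ carefully to see which contributions remain in the limit. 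A secondary technical care is the surface integral $\int_{\Gamma^\varepsilon}\varepsilon\sigma\varphi_2\,dS_\varepsilon$ in the Poisson equation, which must be treated with the two-scale convergence on oscillating hypersurfaces rather than the volume one. Everything else reduces to standard unfolding of cell problems and insertion of the effective tensors $\mathbb{D}$, $\mathbb{K}$.
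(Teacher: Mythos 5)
Your overall route --- passing to the two-scale limit in the weak formulation with oscillating test functions $\psi_{0}(t,x)+\varepsilon\psi_{1}(t,x,x/\varepsilon)$, extracting the cell problems by first taking $\psi_{0}=0$, then averaging over $Y_{l}$ to insert $\mathbb{D}$ and $\mathbb{K}$ --- is exactly the strategy of the proof the paper defers to (Theorems 4.5--4.10 of \cite{RMK12}), and your Poisson and Nernst--Planck blocks are essentially correct: the surface term is handled by two-scale convergence on $\Gamma^{\varepsilon}$, the nonlinear products by the strong convergence $c_{\varepsilon}^{\pm}\to c_{0}^{\pm}$ plus the uniform $L^{\infty}$ bounds, and the exponents $\beta-\alpha$, $\gamma-\alpha$ give the stated dichotomies. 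Two small imprecisions there: to read off the natural conditions on $\partial\Omega$ you must take $\psi_{0}\in C^{\infty}(\overline{\Omega})$ rather than $C_{0}^{\infty}(\Omega)$ (the microscopic weak formulation allows $H^{1}(\Omega^{\varepsilon})$ test functions, so this is legitimate); and in the case $\gamma=\alpha$ the cell problem for $c_{1}^{\pm}$ only reduces to the \eqref{eq:cellproblem1}-type problem because $\int_{Y_{l}}v_{0}\cdot\nabla_{y}\psi_{1}\,dy=0$ and because $\nabla_{x}\tilde{\Phi}_{0}+\nabla_{y}\tilde{\Phi}_{1}$ is $y$-divergence-free with vanishing normal flux on $\Gamma$; this decoupling should be stated, not assumed.

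The genuine gap is in the Stokes block. With your choice $\varphi_{1}=\varepsilon\,w(t,x,x/\varepsilon)$ every term of the weak formulation vanishes in the limit: $\nabla\varphi_{1}=(\nabla_{y}w)^{\varepsilon}+\varepsilon(\nabla_{x}w)^{\varepsilon}$ is of order one, so $\int_{\Omega^{\varepsilon}}\varepsilon^{2}\nabla v_{\varepsilon}\cdot\nabla\varphi_{1}\,dx=\varepsilon\int_{\Omega^{\varepsilon}}(\varepsilon\nabla v_{\varepsilon})\cdot(\nabla_{y}w)^{\varepsilon}dx+O(\varepsilon^{2})\to0$ since $\varepsilon\nabla v_{\varepsilon}$ is merely bounded; with $\nabla_{y}\cdot w=0$ the pressure term is $\varepsilon\int_{\Omega^{\varepsilon}}p_{\varepsilon}(\nabla_{x}\cdot w)^{\varepsilon}dx\to0$; and the forcing $-\varepsilon\int_{\Omega^{\varepsilon}}\varepsilon^{\beta-\alpha}(c_{\varepsilon}^{+}-c_{\varepsilon}^{-})\nabla\tilde{\Phi}_{\varepsilon}\cdot w^{\varepsilon}dx$ carries the same extra factor $\varepsilon$ and disappears even when $\beta=\alpha$. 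So this test function yields only $0=0$; it cannot produce Darcy's law or the $\beta$-dichotomy. The $\varepsilon$-scaled oscillating test functions (without the constraint $\nabla_{y}\cdot w=0$) are instead what proves that $p_{0}$ is independent of $y$ --- a fact you use implicitly when writing $f=-\nabla_{x}p_{0}-\cdots$ but never establish. To obtain the two-scale momentum balance you must test with $\varphi_{1}=w(t,x,x/\varepsilon)$ of order one, $Y$-periodic in $y$, vanishing on the solid part and $y$-divergence-free: then $\int_{\Omega^{\varepsilon}}\varepsilon^{2}\nabla v_{\varepsilon}\cdot\nabla\varphi_{1}\,dx\to\frac{1}{|Y|}\int_{\Omega}\int_{Y_{l}}\nabla_{y}v_{0}:\nabla_{y}w\,dy\,dx$, $\int_{\Omega^{\varepsilon}}p_{\varepsilon}\nabla\cdot\varphi_{1}\,dx\to\frac{1}{|Y|}\int_{\Omega}\int_{Y_{l}}p_{0}\,\nabla_{x}\cdot w\,dy\,dx$, and the electrostatic forcing survives precisely when $\beta=\alpha$, as $-\frac{1}{|Y|}\int_{\Omega}\int_{Y_{l}}(c_{0}^{+}-c_{0}^{-})(\nabla_{x}\tilde{\Phi}_{0}+\nabla_{y}\tilde{\Phi}_{1})\cdot w\,dy\,dx$, the $\nabla_{y}\tilde{\Phi}_{1}$ contribution being absorbed into the first-order pressure. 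From that two-scale Stokes system the cell problems \eqref{eq:cellproblem2}, the representation of $v_{0}$, and Darcy's law with $\mathbb{K}$ follow as you indicate; your treatment of the divergence constraint (giving $\nabla_{x}\cdot\bar{v}_{0}=0$ and $\bar{v}_{0}\cdot\text{n}=0$) is fine.
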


\begin{rem}
	Due to the \emph{a priori} estimate \eqref{eq:thm2.4-1} for the electrostatic potential in Theorem \ref{thm:aprioriestimate-N}, $\Phi_{\varepsilon}$
	and its gradient $\nabla\Phi_{\varepsilon}$ converge to zero when
	$\alpha<0$. In Theorem \ref{thm:macroNeumann}, the number densities
	$c_{0}^{\pm}$ in the macroscopic Poisson equations with permittivity
	tensor $\mathbb{D}$ positions itself as forcing terms. Similarly,
	the forcing terms in the macroscopic Stokes equations with the case
	$\beta=\alpha$ dwell in the part of the electrostatic potential $\tilde{\Phi}_{0}$
	and the distribution of the number densities $c_{0}^{\pm}$. Clearly,
	the macroscopic Nernst-Planck equations in the case $\gamma=\alpha$
	yield the fully coupled system of partial differential equations,
	whilst with $\gamma>\alpha$ it reduces to a convection-diffusion-reaction
	system due to also the structure of the reaction terms $R_{0}^{\pm}$.
\end{rem}

Let us define the function space
\[
H_{N}^{1}\left(\Omega\right):=\left\{ v\in H^{1}\left(\Omega\right):-\mathbb{D}\nabla_{x}u\cdot\text{n}=0\;\text{ on }\partial\Omega\right\},
\]
which is a closed subspace of $H^1(\Omega)$. This Hilbert space plays a role when writing the weak formulation of the macroscopic systems in Theorem \ref{thm:macroNeumann-1} and Theorem \ref{thm:macroDirichlet-1}.

\begin{thm}
	\emph{\textbf{Weak formulation of $(P^0_N)$}}\label{thm:macroNeumann-1}
	
	Let the quadruple of functions $\left(v_{0},p_{0},\tilde{\Phi}_{0},c_{0}^{\pm}\right)$
	be defined as in Theorem \ref{thm:macroNeumann}. Then, it satisfies
	\begin{align*}
	& \bar{v}_{0}\in L^{2}\left(\left(0,T\right)\times\Omega\right),p_{0}\in L^{2}\left(\left(0,T\right)\times\Omega\right),\\
	& \tilde{\Phi}_{0}\in L^{2}\left(0,T;H^{1}\left(\Omega\right)\right),c_{0}^{\pm}\in L^{2}\left(0,T;H^{1}\left(\Omega\right)\right),\partial_{t}c_{0}^{\pm}\in L^{2}\left(0,T;\left(H^{1}\left(\Omega\right)\right)'\right)
	\end{align*}
	and becomes a weak solution to $\left(P_{N}^{0}\right)$ provided
	that
	\begin{align*}
	& \int_{\Omega}\left(\bar{v}_{0}\varphi_{1}-\mathbb{K}p_{0}\nabla\cdot\varphi_{1}\right)dx=-\mathbb{K}\int_{\Omega}\left(c_{0}^{+}-c_{0}^{-}\right)\nabla\tilde{\Phi}_{0}\cdot\varphi_{1}dx\;\text{ if }\;\beta=\alpha,\\
	& \int_{\Omega}\left(\bar{v}_{0}\varphi_{1}-\mathbb{K}p_{0}\nabla\cdot\varphi_{1}\right)dx=0\;\text{ if }\;\beta>\alpha,\\
	& \int_{\Omega}\bar{v}_{0}\cdot\nabla\psi dx=0,\\
	& \int_{\Omega}\left|Y_{l}\right|^{-1}\mathbb{D}\nabla\tilde{\Phi}_{0}\cdot\nabla\varphi_{2}dx-\left|Y_{l}\right|^{-1}\bar{\sigma}\int_{\Omega}\varphi_{2}dx=\int_{\Omega}\left(c_{0}^{+}-c_{0}^{-}\right)\varphi_{2}dx,\\
	& \left\langle \partial_{t}c_{0}^{\pm},\varphi_{3}\right\rangle _{\left(H^{1}\right)',H^{1}}+\int_{\Omega}\left|Y_{l}\right|^{-1}\left(-c_{0}^{\pm}\left(\bar{v}_{0}\mp\mathbb{D}\nabla\tilde{\Phi}_{0}\right)+\mathbb{D}\nabla c_{0}^{\pm}\right)\cdot\nabla\varphi_{3}dx
	\\ &\qquad \qquad \qquad \qquad
	=\int_{\Omega}R_{0}^{\pm}\left(c_{0}^{+},c_{0}^{-}\right)\varphi_{3}dx\;\text{ if }\;\gamma=\alpha,\\
	& \left\langle \partial_{t}c_{0}^{\pm},\varphi_{3}\right\rangle _{\left(H^{1}\right)',H^{1}}+\int_{\Omega}\left|Y_{l}\right|^{-1}\left(-c_{0}^{\pm}\bar{v}_{0}+\mathbb{D}\nabla c_{0}^{\pm}\right)\cdot\nabla\varphi_{3}dx
	\\ & \qquad \qquad \qquad \qquad
	=\int_{\Omega}R_{0}^{\pm}\left(c_{0}^{+},c_{0}^{-}\right)\varphi_{3}dx\;\text{ if }\;\gamma>\alpha,
	\end{align*}
	for all $\left(\varphi_{1},\varphi_{2},\varphi_{3},\psi\right)\in\left[H_{0}^{1}\left(\Omega\right)\right]^{d}\times H^{1}_{N}\left(\Omega\right)\times H^{1}\left(\Omega\right)\times H^{1}\left(\Omega\right)$.
\end{thm}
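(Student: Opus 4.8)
\emph{Plan.} The plan is to read off the asserted regularity from the two-scale compactness already contained in Theorem~\ref{thm:homoNeumann} and then to produce the weak identities either by testing the strong macroscopic system of Theorem~\ref{thm:macroNeumann} or by re-running the two-scale limit passage in the weak formulation of $(P^\varepsilon)$. Concretely, Theorem~\ref{thm:homoNeumann} already delivers $\tilde\Phi_0,c_0^\pm\in L^2(0,T;H^1(\Omega))$, $v_0\in L^2((0,T)\times\Omega;H_{\#}^{1}(Y))$ and $p_0\in L^2((0,T)\times\Omega\times Y)$, so I would set $\bar v_0:=\int_{Y_l}v_0\,dy\in L^2((0,T)\times\Omega)$ and, after checking by the standard argument for the $\varepsilon^2$-scaled Stokes system in a perforated domain that the two-scale pressure limit does not depend on $y$, record $p_0\in L^2((0,T)\times\Omega)$. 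For $\partial_t c_0^\pm\in L^2(0,T;(H^1(\Omega))')$ I would extend $c_\varepsilon^\pm$ to $\Omega$ with the $\varepsilon$-uniform interior extension recalled in Section~\ref{subsec:A-geometrical-interpretation}, extract a weak-$*$ limit in $L^2(0,T;(H^1(\Omega))')$ from the uniform bound~\eqref{eq:thm2.4-3}, and identify it with $\partial_t c_0^\pm$ by testing against $C_0^\infty$ functions.

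\emph{Weak identities.} To get the variational identities I would first use the quick route: multiply each equation of the strong macroscopic system of Theorem~\ref{thm:macroNeumann} by the corresponding test function from $[H^1_0(\Omega)]^d\times H^1_N(\Omega)\times H^1(\Omega)\times H^1(\Omega)$, integrate over $\Omega$, invoke the homogeneous conormal boundary conditions for $\tilde\Phi_0$, $\bar v_0$ and $c_0^\pm$ on $\partial\Omega$, and use $\varphi_1|_{\partial\Omega}=0$ to transfer $\nabla_x p_0$ onto $\nabla\cdot\varphi_1$; the case split $\beta=\alpha$ vs.\ $\beta>\alpha$ (resp.\ $\gamma=\alpha$ vs.\ $\gamma>\alpha$) is then inherited verbatim from the two forms of the momentum (resp.\ Nernst-Planck) equation there, and $\int_\Omega\bar v_0\cdot\nabla\psi\,dx=0$ simultaneously encodes $\nabla_x\cdot\bar v_0=0$ and $\bar v_0\cdot n=0$. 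As a more self-contained alternative I would re-derive the same identities by passing to the two-scale limit directly in the weak formulation of Definition~\ref{def:mathbb=00007BPeps=00007D} with oscillating test functions $\varphi(t,x)+\varepsilon\varphi^1(t,x,x/\varepsilon)$ ($\varphi^1$ smooth and $Y$-periodic, vanishing on $\Gamma$ in the Stokes block), using the convergences of Theorem~\ref{thm:homoNeumann} and the Neumann data~\eqref{eq:boundN} so the pore-surface term of the Poisson equation two-scale converges to the surface-averaged source $\bar\sigma$, then decoupling the two-scale system ($\varphi^1=0$ gives the macroscopic equations, $\varphi=0$ gives the cell problems~\eqref{eq:cellproblem1}--\eqref{eq:cellproblem2}), expressing $\tilde\Phi_1$, $c_1^\pm$ and $v_0$ through the cell functions of Theorem~\ref{thm:auxiliary}, and averaging the corrector contributions over $Y_l$ to produce $\mathbb D$, $\mathbb K$ and the pore fraction $|Y_l|$ in front of $\partial_t c_0^\pm$ and $R_0^\pm$.

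\emph{Main obstacle.} The hard part will be the Stokes block on the perforated geometry: $v_\varepsilon$ extends by zero at no cost, but the pressure does not, so I would need a uniformly bounded pressure extension (Tartar's construction, as used in the stationary setting of~\cite{AMP10}) and would have to verify the $y$-independence of $p_0$, which is precisely what allows $p_0$ to appear in the macroscopic mixed formulation. A second sensitive point is the limit passage in the nonlinear terms --- the convective flux $v_\varepsilon c_\varepsilon^\pm$ and, when $\gamma=\alpha$, the electro-drift $c_\varepsilon^\pm\nabla\tilde\Phi_\varepsilon$ --- which I would handle by combining the \emph{strong} $L^2((0,T)\times\Omega)$ convergence of $c_\varepsilon^\pm$ from Theorem~\ref{thm:homoNeumann} (an Aubin-Lions consequence of the bounds~\eqref{eq:thm2.4-3}) with the weak two-scale convergence of $v_\varepsilon$ and $\nabla\tilde\Phi_\varepsilon$, all the while keeping track of the thresholds $\beta\ge\alpha$ and $\gamma\ge\alpha$ so that the correct limiting form of each flux survives.
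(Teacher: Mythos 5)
Your proposal is correct and coincides in substance with the paper's own treatment: the paper does not reprove this statement but cites the two-scale convergence arguments of Theorems 4.5--4.10 in \cite{RMK12} (see also \cite{ray2013thesis}), and your outline --- regularity read off from the compactness in Theorem \ref{thm:homoNeumann}, limit passage with oscillating test functions (or, equivalently, testing the strong system of Theorem \ref{thm:macroNeumann}), Tartar's pressure extension and $y$-independence of $p_{0}$, and strong $L^{2}$ convergence of $c_{\varepsilon}^{\pm}$ to handle the nonlinear drift and convection terms under the thresholds $\beta\ge\alpha$, $\gamma\ge\alpha$ --- is exactly that argument.
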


The proof of Theorems \ref{thm:homoNeumann}, \ref{thm:macroNeumann} and \ref{thm:macroNeumann-1} are collected from Theorems 4.5--4.10 in \cite{RMK12} and can also be found in \cite{ray2013thesis}.
\subsection{Dirichlet condition for the electrostatic potential}
\begin{remark}
	In Theorem \ref{thm:positive+bound}, the proof (as mentioned in \cite[Theorem 3.3, Theorem 3.4]{RMK12}) consists in suitable choices of test functions, based on the energy-estimates arguments. Nevertheless,  for the problem where the Dirichlet boundary condition \eqref{eq:boundD} is prescribed, the volume additivity constraint $c_{\varepsilon}^{+}+c_{\varepsilon}^{-}=1$ is required to guarantee the $\varepsilon$-independent boundedness of the concentration fields.
\end{remark}
\begin{defn}
	\label{def:Phihom}Assume $\left(\mbox{A}_{1}\right)$-$\left(\mbox{A}_{4}\right)$.
	Let $\Phi_{\varepsilon}$ be a solution of the microscopic problem
	$\left(P^{\varepsilon}\right)$ in the sense of Definition \ref{def:mathbb=00007BPeps=00007D}.
	Then the transformed electrostatic potential $\Phi_{\varepsilon}^{\text{hom}}:=\Phi_{\varepsilon}-\Phi_{D}$
	satifies the following system:
	\begin{align*}
		& -\varepsilon^{\alpha}\Delta\Phi_{\varepsilon}^{\text{hom}}=c_{\varepsilon}^{+}-c_{\varepsilon}^{-}\quad\text{in }\;Q_{T}^{\varepsilon},\\
		& \Phi_{\varepsilon}^{\text{hom}}=0\quad\text{in }\;\left(0,T\right)\times\Gamma_{D}^{\varepsilon},\\
		& \varepsilon^{\alpha}\nabla\Phi_{\varepsilon}^{\text{hom}}\cdot\text{n}=0\quad\text{in }\;\left(0,T\right)\times\partial\Omega.
	\end{align*}
\end{defn}

\begin{thm}\emph{\textbf{\emph{A priori} estimates}}\label{thm:aprioriestimate-D}
	
	Assume $\left(\mbox{A}_{1}\right)$-$\left(\mbox{A}_{4}\right)$. The following \emph{a priori} estimates hold:
	
	For the electrostatic potential,
	we have
	\begin{equation}
	\varepsilon^{\alpha-2}\left\Vert \Phi_{\varepsilon}^{\text{hom}}\right\Vert _{L^{2}\left(\left(0,T\right)\times\Omega^{\varepsilon}\right)}+\varepsilon^{\alpha-1}\left\Vert \nabla\Phi_{\varepsilon}^{\text{hom}}\right\Vert _{L^{2}\left(\left(0,T\right)\times\Omega^{\varepsilon}\right)}\le C.
	\label{eq:thm2.4-4}
	\end{equation}
	If $\beta\ge\alpha-1$, it holds
	\begin{equation}
	\left\Vert v_{\varepsilon}\right\Vert _{L^{2}\left(\left(0,T\right)\times\Omega^{\varepsilon}\right)}+\varepsilon\left\Vert \nabla v_{\varepsilon}\right\Vert _{L^{2}\left(\left(0,T\right)\times\Omega^{\varepsilon}\right)}\le C,
	\label{eq:thm2.4-5}
	\end{equation}
	and additionally if $\gamma\ge\alpha-1$, it holds
	\begin{align}
	& \max_{t\in\left[0,T\right]}\left\Vert c_{\varepsilon}^{-}\right\Vert _{L^{2}\left(\Omega^{\varepsilon}\right)}+\max_{t\in\left[0,T\right]}\left\Vert c_{\varepsilon}^{+}\right\Vert _{L^{2}\left(\Omega^{\varepsilon}\right)}+\left\Vert \nabla c_{\varepsilon}^{-}\right\Vert _{L^{2}\left(\left(0,T\right)\times\Omega^{\varepsilon}\right)}+\left\Vert \nabla c_{\varepsilon}^{+}\right\Vert _{L^{2}\left(\left(0,T\right)\times\Omega^{\varepsilon}\right)} \nonumber \\
	& +\left\Vert \partial_{t}c_{\varepsilon}^{-}\right\Vert _{L^{2}\left(0,T;\left(H^{1}\left(\Omega^{\varepsilon}\right)\right)'\right)}+\left\Vert \partial_{t}c_{\varepsilon}^{+}\right\Vert _{L^{2}\left(0,T;\left(H^{1}\left(\Omega^{\varepsilon}\right)\right)'\right)}\le C.
	\label{eq:thm2.4-6}
	\end{align}
\end{thm}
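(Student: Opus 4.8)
The plan is to adapt the energy-method arguments of Theorem \ref{thm:aprioriestimate-N} to the Dirichlet setting, working with the homogenized potential $\Phi_{\varepsilon}^{\text{hom}}=\Phi_{\varepsilon}-\Phi_{D}$ introduced in Definition \ref{def:Phihom}, whose system has homogeneous Dirichlet data on $\Gamma_{D}^{\varepsilon}$. The key structural difference from the Neumann case is that on $\Gamma_{D}^{\varepsilon}$ a Poincar\'e inequality is now available for $\Phi_{\varepsilon}^{\text{hom}}$, but with the characteristic $\varepsilon$-scaling of the perforated cell: one has $\|w\|_{L^{2}(\Omega^{\varepsilon})}\le C\varepsilon\|\nabla w\|_{L^{2}(\Omega^{\varepsilon})}$ for $w$ vanishing on $\Gamma_{D}^{\varepsilon}$, where the factor $\varepsilon$ comes from rescaling the Poincar\'e inequality on the reference cell $Y_{l}$. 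This is exactly what produces the mismatched powers $\varepsilon^{\alpha-2}$ and $\varepsilon^{\alpha-1}$ in \eqref{eq:thm2.4-4}, as opposed to the single power $\varepsilon^{\alpha}$ in \eqref{eq:thm2.4-1}.

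First I would test the (homogenized) Poisson equation $-\varepsilon^{\alpha}\Delta\Phi_{\varepsilon}^{\text{hom}}=c_{\varepsilon}^{+}-c_{\varepsilon}^{-}$ with $\varphi_{2}=\Phi_{\varepsilon}^{\text{hom}}$ itself; since $\Phi_{\varepsilon}^{\text{hom}}$ vanishes on $\Gamma_{D}^{\varepsilon}$ and the normal derivative vanishes on $\partial\Omega$, the boundary terms drop and we get $\varepsilon^{\alpha}\|\nabla\Phi_{\varepsilon}^{\text{hom}}\|_{L^{2}(\Omega^{\varepsilon})}^{2}\le \|c_{\varepsilon}^{+}-c_{\varepsilon}^{-}\|_{L^{2}(\Omega^{\varepsilon})}\,\|\Phi_{\varepsilon}^{\text{hom}}\|_{L^{2}(\Omega^{\varepsilon})}$. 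Applying the scaled Poincar\'e inequality on the right, $\|\Phi_{\varepsilon}^{\text{hom}}\|_{L^{2}(\Omega^{\varepsilon})}\le C\varepsilon\|\nabla\Phi_{\varepsilon}^{\text{hom}}\|_{L^{2}(\Omega^{\varepsilon})}$, and absorbing, yields $\varepsilon^{\alpha}\|\nabla\Phi_{\varepsilon}^{\text{hom}}\|_{L^{2}(\Omega^{\varepsilon})}\le C\varepsilon\|c_{\varepsilon}^{+}-c_{\varepsilon}^{-}\|_{L^{2}(\Omega^{\varepsilon})}$, and then another application of Poincar\'e controls $\varepsilon^{\alpha}\|\Phi_{\varepsilon}^{\text{hom}}\|_{L^{2}(\Omega^{\varepsilon})}\le C\varepsilon^{2}\|c_{\varepsilon}^{+}-c_{\varepsilon}^{-}\|_{L^{2}(\Omega^{\varepsilon})}$. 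Integrating in time and using the (yet to be closed) $L^{\infty}(0,T;L^{2})$ bound on $c_{\varepsilon}^{\pm}$ gives \eqref{eq:thm2.4-4}; note the $\varepsilon$-factors are precisely those that turn the exponent $\alpha$ into $\alpha-1$ and $\alpha-2$. Next, for \eqref{eq:thm2.4-5}, test the Stokes momentum equation with $\varphi_{1}=v_{\varepsilon}$; using $\nabla\cdot v_{\varepsilon}=0$ the pressure term disappears, leaving $\varepsilon^{2}\|\nabla v_{\varepsilon}\|_{L^{2}}^{2}\le\varepsilon^{\beta}\|(c_{\varepsilon}^{+}-c_{\varepsilon}^{-})\nabla\Phi_{\varepsilon}\|_{L^{2}}\|v_{\varepsilon}\|_{L^{2}}$. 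Using the scaled Poincar\'e inequality for $v_{\varepsilon}\in H_{0}^{1}(\Omega^{\varepsilon})$ (vanishing on $\Gamma^{\varepsilon}$) to write $\|v_{\varepsilon}\|_{L^{2}}\le C\varepsilon\|\nabla v_{\varepsilon}\|_{L^{2}}$, the boundedness of $c_{\varepsilon}^{\pm}$ in $L^{\infty}$ (Theorem \ref{thm:positive+bound}, or the volume-additivity constraint recalled in the Remark), and the estimate \eqref{eq:thm2.4-4} for $\varepsilon^{\alpha-1}\|\nabla\Phi_{\varepsilon}^{\text{hom}}\|$, one absorbs the gradient term provided $\beta\ge\alpha-1$: indeed $\varepsilon^{\beta}\|\nabla\Phi_{\varepsilon}\|\le C\varepsilon^{\beta}\varepsilon^{1-\alpha}=C\varepsilon^{\beta-\alpha+1}\le C$ exactly when $\beta\ge\alpha-1$. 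This closes \eqref{eq:thm2.4-5} and also bounds $\|v_{\varepsilon}\|_{L^{2}}$ via Poincar\'e.

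Finally, for \eqref{eq:thm2.4-6}, test the Nernst--Planck weak formulation with $\varphi_{3}=c_{\varepsilon}^{\pm}$. The time-derivative term gives $\frac{1}{2}\frac{d}{dt}\|c_{\varepsilon}^{\pm}\|_{L^{2}}^{2}$, the diffusion term gives $\|\nabla c_{\varepsilon}^{\pm}\|_{L^{2}}^{2}$, and the two problematic terms are the convective term $\int v_{\varepsilon}c_{\varepsilon}^{\pm}\cdot\nabla c_{\varepsilon}^{\pm}$ and the drift term $\pm\varepsilon^{\gamma}\int c_{\varepsilon}^{\pm}\nabla\Phi_{\varepsilon}\cdot\nabla c_{\varepsilon}^{\pm}$; the reaction term is handled by the linear structure in $(\mathrm{A}_{3})$ and Gr\"onwall. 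For the drift term one uses $\varepsilon^{\gamma}\|\nabla\Phi_{\varepsilon}\|\le C\varepsilon^{\gamma-\alpha+1}\le C$ when $\gamma\ge\alpha-1$, together with the $L^{\infty}$ bound on $c_{\varepsilon}^{\pm}$, to dominate by $\eta\|\nabla c_{\varepsilon}^{\pm}\|_{L^{2}}^{2}+C_{\eta}$. For the convective term, $v_{\varepsilon}c_{\varepsilon}^{\pm}$ is bounded in $L^{2}$ by \eqref{eq:thm2.4-5} and the $L^{\infty}$ bound on $c_{\varepsilon}^{\pm}$, so it is again absorbed into $\eta\|\nabla c_{\varepsilon}^{\pm}\|_{L^{2}}^{2}$. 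Choosing $\eta$ small, integrating in time and applying Gr\"onwall closes the $\max_t\|c_{\varepsilon}^{\pm}\|_{L^{2}}$ and $\|\nabla c_{\varepsilon}^{\pm}\|_{L^{2}((0,T)\times\Omega^{\varepsilon})}$ bounds; the bound on $\partial_{t}c_{\varepsilon}^{\pm}$ in $L^{2}(0,T;(H^{1})')$ then follows by duality directly from the weak formulation, using the already-obtained bounds on $v_{\varepsilon}c_{\varepsilon}^{\pm}$, $\nabla c_{\varepsilon}^{\pm}$, $\varepsilon^{\gamma}c_{\varepsilon}^{\pm}\nabla\Phi_{\varepsilon}$, and $R_{\varepsilon}^{\pm}$. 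The main obstacle is the bootstrapping order: the $L^{\infty}(0,T;L^{2})$ bound on $c_{\varepsilon}^{\pm}$ is needed to close \eqref{eq:thm2.4-4}, which in turn is needed to close \eqref{eq:thm2.4-5} and then \eqref{eq:thm2.4-6}, yet \eqref{eq:thm2.4-6} is where the $L^{\infty}(0,T;L^{2})$ bound is actually derived — so one must either invoke the independent $L^{\infty}$-in-space bound from Theorem \ref{thm:positive+bound} (valid here under the volume-additivity constraint $c_{\varepsilon}^{+}+c_{\varepsilon}^{-}=1$ noted in the Remark preceding Definition \ref{def:Phihom}) to break the circularity, or else set up a single coupled Gr\"onwall argument that treats all quantities simultaneously. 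Tracking the exact powers of $\varepsilon$ through the two nested Poincar\'e inequalities — and verifying that the thresholds $\beta\ge\alpha-1$ and $\gamma\ge\alpha-1$ are sharp for absorption — is the other point requiring care.
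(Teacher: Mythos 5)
Your proposal is correct and follows essentially the intended route: the paper itself gives no proof of Theorem \ref{thm:aprioriestimate-D}, deferring to \cite{RMK12} (Theorem 3.6), and the argument there is exactly your energy method --- testing the transformed Poisson, Stokes and Nernst--Planck equations with the solutions themselves, using the $\varepsilon$-scaled Poincar\'e inequality for functions vanishing on the micro-surfaces (which is what shifts the exponents to $\alpha-1$ and $\alpha-2$), and absorbing the drift and forcing terms under $\beta\ge\alpha-1$, $\gamma\ge\alpha-1$. You also correctly resolve the apparent circularity by invoking the $\varepsilon$-uniform $L^{\infty}$ bound on $c_{\varepsilon}^{\pm}$ (available in the Dirichlet case under the volume-additivity constraint recalled in the Remark preceding Definition \ref{def:Phihom}), which is established before the a priori estimates, so no coupled Gr\"onwall argument is actually needed.
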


The proof of Theorem \ref{thm:aprioriestimate-D} can be found in \cite{RMK12} (see Theorem 3.6) and \cite{ray2013thesis}.
\begin{thm}
	\emph{\textbf{Homogenization of ($P_{D}^{\varepsilon}$)}}\label{thm:homoDirichlet}
	
	Let the \emph{a priori} estimates \eqref{eq:thm2.4-4}-\eqref{eq:thm2.4-6} of Theorem \ref{thm:aprioriestimate-D}
	be valid. Let $\Phi_{\varepsilon}^{\text{hom}}$
	be as defined in Definition \ref{def:Phihom}. Taking $\tilde{\Phi}_{\varepsilon}:=\varepsilon^{\alpha-2}\Phi_{\varepsilon}^{\text{hom}}$,
	then it satisfies the following system:
	\begin{align*}
		& -\varepsilon^{2}\Delta\tilde{\Phi}_{\varepsilon}=c_{\varepsilon}^{+}-c_{\varepsilon}^{-}\;\text{ in }\;Q_{T}^{\varepsilon},\\
		& \tilde{\Phi}_{\varepsilon}=0\;\text{ in }\;\left(0,T\right)\times\Gamma_{\varepsilon},\\
		& \varepsilon^{2}\nabla\tilde{\Phi}_{\varepsilon}\cdot\text{n}=0\;\text{ in }\;\left(0,T\right)\times\partial\Omega.
	\end{align*}
	
	Therefore, we can find a function $\tilde{\Phi}_{0}\in L^{2}\left(\left(0,T\right)\times\Omega;H_{\#}^{1}\left(Y\right)\right)$
	such that, up to a subsequence,
	\begin{align*}
		& \tilde{\Phi}_{\varepsilon}\stackrel{2}{\rightharpoonup}\tilde{\Phi}_{0},\\
		& \varepsilon\nabla\tilde{\Phi}_{\varepsilon}\stackrel{2}{\rightharpoonup}\nabla_{y}\tilde{\Phi}_{0}.
	\end{align*}
	
	If additionally $\beta\ge\alpha-1$, then there exist functions $v_{0}\in L^{2}\left(\left(0,T\right)\times\Omega;H_{\#}^{1}\left(Y\right)\right)$
	and $p_{0}\left(t,x,y\right)\in L^{2}\left(\left(0,T\right)\times\Omega\times Y\right)$
	such that, up to a subsequence, we have
	\begin{align*}
		& v_{\varepsilon}\stackrel{2}{\rightharpoonup}v_{0},\\
		& \varepsilon\nabla v_{\varepsilon}\stackrel{2}{\rightharpoonup}\nabla_{y}v_{0},\\
		& p_{\varepsilon}\stackrel{2}{\rightharpoonup}p_{0}.
	\end{align*}
	
	Furthermore, there exist functions $c_{0}^{\pm}\in L^{2}\left(0,T;H^{1}\left(\Omega\right)\right)$
	and $c_{1}^{\pm}\in L^{2}\left(\left(0,T\right)\times\Omega;H_{\#}^{1}\left(Y\right)\right)$
	such that, up to a subsequence, we have
	\begin{align*}
		& c_{\varepsilon}^{\pm}\to c_{0}^{\pm}\;\text{ strongly in }\;L^{2}\left(\left(0,T\right)\times\Omega\right),\\
		& \nabla c_{\varepsilon}^{\pm}\stackrel{2}{\rightharpoonup}\nabla_{x}c_{0}^{\pm}+\nabla_{y}c_{1}^{\pm}.
	\end{align*}
\end{thm}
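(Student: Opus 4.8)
The plan is to run the now standard two-scale convergence programme on the rescaled potential $\tilde{\Phi}_{\varepsilon}$ and on the already-scaled unknowns $v_{\varepsilon},p_{\varepsilon},c_{\varepsilon}^{\pm}$: derive $\varepsilon$-uniform bounds, extend everything to $\Omega$, extract two-scale limits by compactness, and identify the microscopic structure of the gradient limits by an integration-by-parts test.

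First I would record the rescaled Poisson system. Writing $\Phi_{\varepsilon}^{\text{hom}}=\varepsilon^{2-\alpha}\tilde{\Phi}_{\varepsilon}$ in the system of Definition \ref{def:Phihom} and multiplying $-\varepsilon^{\alpha}\Delta\Phi_{\varepsilon}^{\text{hom}}=c_{\varepsilon}^{+}-c_{\varepsilon}^{-}$ by $\varepsilon^{\alpha-2}$ gives $-\varepsilon^{2}\Delta\tilde{\Phi}_{\varepsilon}=c_{\varepsilon}^{+}-c_{\varepsilon}^{-}$, with $\tilde{\Phi}_{\varepsilon}=0$ on $\Gamma_{\varepsilon}$ and $\varepsilon^{2}\nabla\tilde{\Phi}_{\varepsilon}\cdot\text{n}=0$ on $\partial\Omega$ by homogeneity; this is the first assertion. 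Since $\varepsilon\nabla\tilde{\Phi}_{\varepsilon}=\varepsilon^{\alpha-1}\nabla\Phi_{\varepsilon}^{\text{hom}}$, the \emph{a priori} estimate \eqref{eq:thm2.4-4} reads precisely $\Vert\tilde{\Phi}_{\varepsilon}\Vert_{L^{2}((0,T)\times\Omega^{\varepsilon})}+\Vert\varepsilon\nabla\tilde{\Phi}_{\varepsilon}\Vert_{[L^{2}((0,T)\times\Omega^{\varepsilon})]^{d}}\le C$, and \eqref{eq:thm2.4-5}, \eqref{eq:thm2.4-6} furnish the companion $\varepsilon$-uniform bounds for $v_{\varepsilon}$ (together with $\varepsilon\nabla v_{\varepsilon}$) and for $c_{\varepsilon}^{\pm}$ in $L^{\infty}(0,T;L^{2})\cap L^{2}(0,T;H^{1})$ with $\partial_{t}c_{\varepsilon}^{\pm}$ in $L^{2}(0,T;(H^{1})')$.

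Next I would extend all quantities from $\Omega^{\varepsilon}$ to $\Omega$: for $\tilde{\Phi}_{\varepsilon},v_{\varepsilon},c_{\varepsilon}^{\pm}$ via the $\varepsilon$-uniform interior extension recalled in Section \ref{subsec:A-geometrical-interpretation}, and for $p_{\varepsilon}$ via the standard restriction/extension operator on perforated domains, whose $\varepsilon$-independent norm (using the connectedness and non-degeneracy of $Y_{l}$) converts the Stokes momentum balance into a uniform $L^{2}$ bound on $p_{\varepsilon}$. Then the first part of the two-scale compactness theorem, applied along one common subsequence to $\tilde{\Phi}_{\varepsilon}$ and $\varepsilon\nabla\tilde{\Phi}_{\varepsilon}$, to $v_{\varepsilon}$ and $\varepsilon\nabla v_{\varepsilon}$ (when $\beta\ge\alpha-1$), and to $p_{\varepsilon}$, produces two-scale limits $\tilde{\Phi}_{0},\chi_{0},v_{0},\eta_{0},p_{0}\in L^{2}((0,T)\times\Omega\times Y)$. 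For the concentrations, the Aubin--Lions--Simon lemma (using $H^{1}(\Omega)\hookrightarrow\hookrightarrow L^{2}(\Omega)\hookrightarrow(H^{1}(\Omega))'$) applied to the extended sequences gives strong convergence $c_{\varepsilon}^{\pm}\to c_{0}^{\pm}$ in $L^{2}((0,T)\times\Omega)$ with $c_{0}^{\pm}\in L^{2}(0,T;H^{1}(\Omega))$, while the second part of the two-scale compactness theorem gives $\nabla c_{\varepsilon}^{\pm}\stackrel{2}{\rightharpoonup}\nabla_{x}c_{0}^{\pm}+\nabla_{y}c_{1}^{\pm}$ for some $c_{1}^{\pm}\in L^{2}((0,T)\times\Omega;H_{\#}^{1}(Y))$.

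It then remains to identify $\chi_{0}=\nabla_{y}\tilde{\Phi}_{0}$ and $\eta_{0}=\nabla_{y}v_{0}$, which at the same time upgrades $\tilde{\Phi}_{0}$ and $v_{0}$ to $L^{2}((0,T)\times\Omega;H_{\#}^{1}(Y))$. For this I would test $\varepsilon\nabla\tilde{\Phi}_{\varepsilon}$ against $\varphi(t,x,x/\varepsilon)$ with $\varphi\in C_{0}^{\infty}((0,T)\times\Omega;C_{\#}^{\infty}(Y))^{d}$, integrate by parts in $x$ (the boundary contributions vanish since $\tilde{\Phi}_{\varepsilon}=0$ on $\Gamma_{\varepsilon}$ and $\varphi$ is compactly supported in $\Omega$), use $\nabla\cdot\big(\varepsilon\varphi(t,x,x/\varepsilon)\big)=\varepsilon(\nabla_{x}\cdot\varphi)+(\nabla_{y}\cdot\varphi)$ evaluated at $(t,x,x/\varepsilon)$, and pass to the two-scale limit; the $\varepsilon(\nabla_{x}\cdot\varphi)$ term drops and one is left with $\int_{0}^{T}\!\int_{\Omega}\!\int_{Y}\chi_{0}\cdot\varphi=-\int_{0}^{T}\!\int_{\Omega}\!\int_{Y}\tilde{\Phi}_{0}\,\nabla_{y}\cdot\varphi$, i.e. $\chi_{0}=\nabla_{y}\tilde{\Phi}_{0}$ with $\tilde{\Phi}_{0}(t,x,\cdot)\in H_{\#}^{1}(Y)$ for a.e. $(t,x)$; the identical computation, now with $v_{\varepsilon}=0$ on $\Gamma^{\varepsilon}\cup\partial\Omega$, gives $\eta_{0}=\nabla_{y}v_{0}$. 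I expect the main obstacle to be the pressure: no bound on $p_{\varepsilon}$ is contained in \eqref{eq:thm2.4-4}--\eqref{eq:thm2.4-6}, so it has to be manufactured from the momentum equation by a Bogovskii-type (restriction) operator with $\varepsilon$-independent norm on the perforated geometry --- which is exactly where the structural hypotheses on $Y_{l}$ enter --- while a secondary care is needed in the last integration by parts to be sure the $\Gamma_{\varepsilon}$ and $\partial\Omega$ terms genuinely cancel for the admissible test functions.
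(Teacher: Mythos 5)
Your proposal is correct and follows essentially the same route as the proof the paper relies on (Theorems 4.11--4.16 of \cite{RMK12} and \cite{ray2013thesis}): the \emph{a priori} bounds of Theorem \ref{thm:aprioriestimate-D} rescaled to $\tilde{\Phi}_{\varepsilon}$, two-scale compactness with identification of the scaled gradient limits $\varepsilon\nabla\tilde{\Phi}_{\varepsilon}\stackrel{2}{\rightharpoonup}\nabla_{y}\tilde{\Phi}_{0}$ and $\varepsilon\nabla v_{\varepsilon}\stackrel{2}{\rightharpoonup}\nabla_{y}v_{0}$ via oscillating test functions, Aubin--Lions for the strong convergence of $c_{\varepsilon}^{\pm}$, and a Tartar-type restriction/extension argument for the pressure. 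Only a cosmetic remark: for $\tilde{\Phi}_{\varepsilon}$ and $v_{\varepsilon}$ the natural extension is by zero into the solid part (legitimate precisely because of the homogeneous Dirichlet/no-slip data on $\Gamma^{\varepsilon}$ that you invoke in the integration by parts), rather than the $H^{1}$ interior extension of \cite{HJ91}, which is the right tool only for $c_{\varepsilon}^{\pm}$.
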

\begin{thm}
	\emph{\textbf{Strong formulation of the macroscopic problem in the Dirichlet case - $(P^0_{D})$}}\label{thm:macroDirichlet}
	
	Let $\left(v_{\varepsilon},p_{\varepsilon},\Phi_{\varepsilon},c_{\varepsilon}^{\pm}\right)$
	be a weak solution of $\left(P^{\varepsilon}\right)$ in the sense
	of Definition \ref{def:mathbb=00007BPeps=00007D}. According to Theorem \ref{thm:homoDirichlet}, we have
	the following results:
	
	Let $\tilde{\Phi}_{0}$ be the two-scale limit of the electrostatic
		potential $\tilde{\Phi}_{\varepsilon}$, it then satisfies the following
		macroscopic equation:
		\[
		\overline{\tilde{\Phi}}_{0}\left(t,x\right)=\left(\int_{Y_{l}}\varphi\left(y\right)dy\right)\left(c_{0}^{+}\left(t,x\right)-c_{0}^{-}\left(t,x\right)\right),
		\]
		where $\overline{\tilde{\Phi}}_{0}\left(t,x\right)=\int_{Y_{l}}\tilde{\Phi}_{0}\left(t,x,y\right)dy$
		and $\varphi$ is the solution of the cell problem (\ref{eq:cellproblem3}).
		
		Let $v_{0}$ be the two-scale limit of the velocity field $v_{\varepsilon}$.
		With $\beta\ge\alpha-1$, it then satisfies the following macroscopic
		system:
		\[
		\begin{cases}
		\bar{v}_{0}\left(t,x\right)+\mathbb{K}\nabla_{x}p_{0}\left(t,x\right)=0 & \text{in}\;\left(0,T\right)\times\Omega,\\
		\nabla_{x}\cdot\bar{v}_{0}\left(t,x\right)=0 & \text{in}\;\left(0,T\right)\times\Omega,\\
		\bar{v}_{0}\left(t,x\right)\cdot\text{n}=0 & \text{on}\;\left(0,T\right)\times\partial\Omega,
		\end{cases}
		\]
		where $\bar{v}_{0}\left(t,x\right)=\int_{Y_{l}}v_{0}\left(t,x,y\right)dy$
		and the permeability tensor $\mathbb{K}$ is defined in Theorem \ref{thm:auxiliary}.
		
		Let $c_{0}^{\pm}$ be the two-scale limits of the concentration fields
		$c_{\varepsilon}^{\pm}$. With $\gamma\ge\alpha-1$, they satisfy
		the following macroscopic system:
		\[
		\begin{cases}
		\left|Y_{l}\right|\partial_{t}c_{0}^{\pm}\left(t,x\right)+\nabla_{x}\cdot\left[c_{0}^{\pm}\left(t,x\right)\bar{v}_{0}\left(t,x\right)-\mathbb{D}\nabla_{x}c_{0}^{\pm}\left(t,x\right)\right]\\
		=\left|Y_{l}\right|R_{0}^{\pm}\left(c_{0}^{+}\left(t,x\right),c_{0}^{-}\left(t,x\right)\right) & \text{in}\;\left(0,T\right)\times\Omega,\\
		\left(c_{0}^{\pm}\left(t,x\right)\bar{v}_{0}\left(t,x\right)-\mathbb{D}\nabla_{x}c_{0}^{\pm}\left(t,x\right)\right)\cdot\text{n}=0 & \text{on}\;\left(0,T\right)\times\partial\Omega.
		\end{cases}
		\]
		where the permittivity/diffusion tensor $\mathbb{D}$ is defined in
		Theorem \ref{thm:auxiliary}.
\end{thm}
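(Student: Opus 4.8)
The plan is to pass to the two-scale limit $\varepsilon\to0$ in the weak formulation of $(P^{\varepsilon})$ from Definition~\ref{def:mathbb=00007BPeps=00007D}, after replacing $\Phi_{\varepsilon}$ by the rescaled potential of Theorem~\ref{thm:homoDirichlet} (so that $\nabla\Phi_{\varepsilon}=\nabla\Phi_{\varepsilon}^{\text{hom}}=\varepsilon^{2-\alpha}\nabla\tilde{\Phi}_{\varepsilon}$), feeding in the convergences of Theorem~\ref{thm:homoDirichlet} together with the $\varepsilon$-uniform bounds \eqref{eq:thm2.4-4}--\eqref{eq:thm2.4-6}. The workhorses are: the strong $L^{2}((0,T)\times\Omega)$ convergence $c_{\varepsilon}^{\pm}\to c_{0}^{\pm}$, the two-scale limits $\varepsilon\nabla v_{\varepsilon}\stackrel{2}{\rightharpoonup}\nabla_{y}v_{0}$, $p_{\varepsilon}\stackrel{2}{\rightharpoonup}p_{0}$, $\nabla c_{\varepsilon}^{\pm}\stackrel{2}{\rightharpoonup}\nabla_{x}c_{0}^{\pm}+\nabla_{y}c_{1}^{\pm}$ and $\varepsilon\nabla\tilde{\Phi}_{\varepsilon}\stackrel{2}{\rightharpoonup}\nabla_{y}\tilde{\Phi}_{0}$, the interior extension of \cite[Lemma~5]{HJ91} (to read all integrals over $\Omega$ rather than over $\Omega^{\varepsilon}$), and the bound $\varepsilon\|\nabla\tilde{\Phi}_{\varepsilon}\|_{L^{2}}\le C$. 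As usual I would test with functions of the form $\varphi(t,x)+\varepsilon\varphi^{1}(t,x,x/\varepsilon)$, with $\varphi^{1}$ smooth and $Y$-periodic in its last slot (and vanishing on $\Gamma$ whenever a homogeneous pore-surface trace is wanted), and separate the content by first taking $\varphi^{1}\equiv0$, which yields the macroscopic equations, and then $\varphi\equiv0$, which yields the cell problems of Theorem~\ref{thm:auxiliary}; recall $|Y|=1$, so only the porosity factors $|Y_{l}|$ survive in the limit.

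For the electrostatic potential I would test the identity $\int_{\Omega^{\varepsilon}}\varepsilon^{2}\nabla\tilde{\Phi}_{\varepsilon}\cdot\nabla\varphi_{2}\,dx=\int_{\Omega^{\varepsilon}}(c_{\varepsilon}^{+}-c_{\varepsilon}^{-})\varphi_{2}\,dx$ against $\varphi_{2}(t,x,x/\varepsilon)$ with $\varphi_{2}$ smooth, $Y$-periodic and vanishing on $\Gamma$: the $\nabla_{x}$-part on the left is $O(\varepsilon)$ by $\varepsilon\|\nabla\tilde{\Phi}_{\varepsilon}\|_{L^{2}}\le C$, the $\nabla_{y}$-part two-scale converges to the integral of $\nabla_{y}\tilde{\Phi}_{0}\cdot\nabla_{y}\varphi_{2}$ over $(0,T)\times\Omega\times Y_{l}$, and the right side converges by strong--weak pairing to the integral of $(c_{0}^{+}-c_{0}^{-})\varphi_{2}$ over the same set. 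Hence, for a.e.\ $(t,x)$, $\tilde{\Phi}_{0}(t,x,\cdot)$ solves $-\Delta_{y}\tilde{\Phi}_{0}=c_{0}^{+}-c_{0}^{-}$ in $Y_{l}$, $\tilde{\Phi}_{0}=0$ on $\Gamma$, periodic; since the datum is constant in $y$, linearity and uniqueness for \eqref{eq:cellproblem3} force $\tilde{\Phi}_{0}(t,x,y)=(c_{0}^{+}(t,x)-c_{0}^{-}(t,x))\varphi(y)$, and integration over $Y_{l}$ gives the claimed identity for $\overline{\tilde{\Phi}}_{0}$. I single out here the fact, used twice below, that $\int_{Y_{l}}\nabla_{y}\varphi\,dy=\int_{\Gamma}\varphi\,\text{n}\,dS_{y}=0$, because $\varphi$ vanishes on $\Gamma$ (the $\partial Y$ contributions cancelling by periodicity).

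For the Stokes part I would test the momentum balance against divergence-free (in $y$) $\varphi_{1}(t,x,x/\varepsilon)$ vanishing on $\Gamma$: the left side tends to the $(0,T)\times\Omega\times Y_{l}$-integral of $\nabla_{y}v_{0}\cdot\nabla_{y}\varphi_{1}-p_{0}\,\nabla_{x}\cdot\varphi_{1}$, while the right side, rewritten as $-\varepsilon^{\beta+1-\alpha}(c_{\varepsilon}^{+}-c_{\varepsilon}^{-})(\varepsilon\nabla\tilde{\Phi}_{\varepsilon})$ with $\beta+1-\alpha\ge0$, either vanishes (if $\beta>\alpha-1$) or, if $\beta=\alpha-1$, two-scale converges to $-(c_{0}^{+}-c_{0}^{-})^{2}\nabla_{y}\varphi$, a $y$-gradient that is reabsorbed into the microscopic part of the pressure and contributes nothing to $v_{0}$. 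In both cases the limiting cell identity for $v_{0}$ is \eqref{eq:cellproblem2} with the $y$-independent forcing $-\nabla_{x}p_{0}$, so $v_{0}(t,x,y)=-\sum_{j}w_{j}(y)\partial_{x_{j}}p_{0}(t,x)$; averaging over $Y_{l}$ and using $K_{ij}=\int_{Y_{l}}w_{j}^{i}\,dy$ gives $\bar{v}_{0}+\mathbb{K}\nabla_{x}p_{0}=0$, testing incompressibility against $\psi=\psi(t,x)$ gives $\nabla_{x}\cdot\bar{v}_{0}=0$, and the extension together with the divergence theorem gives $\bar{v}_{0}\cdot\text{n}=0$ on $\partial\Omega$. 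For the Nernst--Planck equations I would test against $\varphi_{3}(t,x)+\varepsilon\varphi_{3}^{1}(t,x,x/\varepsilon)$: the convective flux $-v_{\varepsilon}c_{\varepsilon}^{\pm}$ two-scale converges to $-v_{0}c_{0}^{\pm}$, while the drift flux $\pm\varepsilon^{\gamma+1-\alpha}c_{\varepsilon}^{\pm}(\varepsilon\nabla\tilde{\Phi}_{\varepsilon})$ (with $\gamma+1-\alpha\ge0$) vanishes for $\gamma>\alpha-1$ and, for $\gamma=\alpha-1$, two-scale converges to $\pm c_{0}^{\pm}(c_{0}^{+}-c_{0}^{-})\nabla_{y}\varphi$, whose $Y_{l}$-average is zero by the identity $\int_{Y_{l}}\nabla_{y}\varphi\,dy=0$ just noted; thus it never enters the macroscopic flux, its only effect being a harmless extra term $\mp c_{0}^{\pm}(c_{0}^{+}-c_{0}^{-})\varphi$ in the corrector $c_{1}^{\pm}$, which also drops out of $\mathbb{D}$ because $\int_{Y_{l}}\partial_{y_{i}}\varphi\,dy=0$. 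Taking $\varphi_{3}^{1}\equiv0$ then produces $|Y_{l}|\partial_{t}c_{0}^{\pm}+\nabla_{x}\cdot(c_{0}^{\pm}\bar{v}_{0}-\mathbb{D}\nabla_{x}c_{0}^{\pm})=|Y_{l}|R_{0}^{\pm}$ with the stated no-flux condition, where $\mathbb{D}$ comes from $c_{1}^{\pm}=\sum_{j}\partial_{x_{j}}c_{0}^{\pm}\varphi_{j}$ (the $\varphi_{3}\equiv0$ choice recovering \eqref{eq:cellproblem1}), the convection from $\int_{Y_{l}}v_{0}\,dy=\bar{v}_{0}$, and $R_{0}^{\pm}$ from the linear rates of $(\mathrm{A}_{3})$; uniqueness for the cell problems in Theorem~\ref{thm:auxiliary} pins down all limits and removes the subsequences.

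I expect the main obstacle to be the rigorous limit in the genuinely nonlinear terms --- $(c_{\varepsilon}^{+}-c_{\varepsilon}^{-})\nabla\Phi_{\varepsilon}$ in the momentum balance and $v_{\varepsilon}c_{\varepsilon}^{\pm}$, $c_{\varepsilon}^{\pm}\nabla\Phi_{\varepsilon}$ in the mass balances --- since two merely weakly convergent factors cannot be multiplied. The remedy is to pair the strong $L^{2}$ convergence of $c_{\varepsilon}^{\pm}$ --- itself obtained from the $\varepsilon$-uniform bounds \eqref{eq:thm2.4-6} on $\nabla c_{\varepsilon}^{\pm}$ and $\partial_{t}c_{\varepsilon}^{\pm}$ through an Aubin--Lions compactness argument adapted to the perforated domain by the extension operator --- against the two-scale convergence of its companion factor; the scaling hypotheses $\beta,\gamma\ge\alpha-1$ are precisely what makes the electric-drift products subcritical, so that this pairing suffices. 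A secondary, bookkeeping-type nuisance is the careful treatment of the $\Gamma^{\varepsilon}$-surface integrals (which vanish by the homogeneous flux and trace conditions) and of the porosity factors $|Y_{l}|$. The remaining steps --- identification of the cell problems, uniqueness in $H^{1}(Y_{l})$ and $H^{1}(Y_{l})/\mathbb{R}$, and the assembly of the tensors $\mathbb{D}$ and $\mathbb{K}$ --- are routine and parallel the Neumann case treated in \cite{RMK12}.
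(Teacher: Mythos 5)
Your proposal is correct and follows essentially the same route as the proof the paper relies on (the two-scale convergence derivation collected from Theorems 4.11--4.16 of \cite{RMK12} and the thesis \cite{ray2013thesis}): oscillating test functions $\varphi+\varepsilon\varphi^{1}$, the \emph{a priori} bounds of Theorem \ref{thm:aprioriestimate-D}, the strong $L^{2}$ convergence of $c_{\varepsilon}^{\pm}$ paired against two-scale convergent factors to handle the nonlinear drift and convection terms, and identification of the limits through the cell problems of Theorem \ref{thm:auxiliary}. The one step you gloss over is that the trace condition $\tilde{\Phi}_{0}=0$ on $\Gamma$, which you need in order to invoke uniqueness for \eqref{eq:cellproblem3}, does not follow from testing with functions vanishing on $\Gamma$ and must be obtained separately (e.g.\ from the zero extension of $\tilde{\Phi}_{\varepsilon}$ into the solid part together with the bound $\varepsilon\left\Vert \nabla\tilde{\Phi}_{\varepsilon}\right\Vert _{L^{2}}\le C$); this is a standard lemma and does not change the argument.
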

\begin{rem}
	Due to the \emph{a priori} estimate for the electrostatic potential
	in Theorem \ref{thm:aprioriestimate-D}, $\Phi_{\varepsilon}$ converges
	to $\Phi_{D}$ as $\alpha<2$. Moreover, in the case $\alpha<1$ we
	obtain the convergence of $\Phi_{\varepsilon}$ and its gradient $\nabla\Phi_{\varepsilon}$
	to the $\zeta$-potential $\Phi_{D}$ and zero, respectively. When
	$\alpha=2$, then $\tilde{\Phi}_{\varepsilon}=\Phi_{\varepsilon}^{\text{hom}}:=\Phi_{\varepsilon}-\Phi_{D}$
	holds, we compute that
	\begin{equation}
		\bar{\Phi}_{0}\left(t,x\right)=\int_{Y_{l}}\left(\Phi_{0}^{\text{hom}}\left(t,x,y\right)+\Phi_{D}\right)dy=\left(\int_{Y_{l}}\varphi\left(y\right)dy\right)\left(c_{0}^{+}\left(t,x\right)-c_{0}^{-}\left(t,x\right)\right)+\left|Y_{l}\right|\Phi_{D}.\label{eq:specialpotential}
	\end{equation}
	
	In Theorem \ref{thm:macroDirichlet}, we see that in contrast to Theorem
	\ref{thm:macroNeumann}, the electrostatic potential is not present
	in the macroscopic Stokes and Nernst-Planck equations. In addition,
	the macroscopic Poisson system for the electrostatic potential reduces
	from the partial differential equations in the Neumann case to the
	macroscopic ``representation'' in the Dirichlet case. Both cases
	are all coupled with the concentration fields $c_{0}^{\pm}$. Note
	that in both Neumann and Dirichlet cases, we need the strong convergence
	of the concentration fields, i.e. $c_{\varepsilon}^{\pm}\to c_{0}^{\pm}$
	in $L^{2}\left(\left(0,T\right)\times\Omega\right)$, to derive the
	macroscopic systems for the electrostatic potential, the fluid flow
	as well as for the pressure, respectively.
\end{rem}
\begin{thm}
	\emph{\textbf{Weak formulation of $(P^0_{D})$}}\label{thm:macroDirichlet-1}
	
	Let the quadruple of functions $\left(v_{0},p_{0},\tilde{\Phi}_{0},c_{0}^{\pm}\right)$
	be defined as in Theorem \ref{thm:macroDirichlet}. Then, it satisfies
	\begin{align*}
		& \bar{v}_{0}\in L^{2}\left(\left(0,T\right)\times\Omega\right),p_{0}\in L^{2}\left(\left(0,T\right)\times\Omega\right),\\
		& \tilde{\Phi}_{0}\in L^{2}\left(\left(0,T\right)\times\Omega\right),c_{0}^{\pm}\in L^{2}\left(0,T;H^{1}\left(\Omega\right)\right),\;\partial_{t}c_{0}^{\pm}\in L^{2}\left(0,T;\left(H^{1}\left(\Omega\right)\right)'\right)
	\end{align*}
	and is a weak solution to $\left(P_{D}^{0}\right)$ provided
	that
	\begin{align*}
		& \int_{\Omega}\left(\bar{v}_{0}\varphi_{1}-\mathbb{K}p_{0}\nabla\cdot\varphi_{1}\right)dx=0,\\
		& \int_{\Omega}\bar{v}_{0}\cdot\nabla\psi dx=0,\\
		& \int_{\Omega}\overline{\tilde{\Phi}}_{0}\varphi_{2}dx=\left(\int_{Y_{l}}\varphi\left(y\right)dy\right)\int_{\Omega}\left(c_{0}^{+}-c_{0}^{-}\right)\varphi_{2}dx,\\
		& \left\langle \partial_{t}c_{0}^{\pm},\varphi_{3}\right\rangle _{\left(H^{1}\right)',H^{1}}+\int_{\Omega}\left|Y_{l}\right|^{-1}\left(-c_{0}^{\pm}\bar{v}_{0}+\mathbb{D}\nabla c_{0}^{\pm}\right)\cdot\nabla\varphi_{3}dx=\int_{\Omega}R_{0}^{\pm}\left(c_{0}^{+},c_{0}^{-}\right)\varphi_{3}dx,
	\end{align*}
	for all $\left(\varphi_{1},\varphi_{2},\varphi_{3},\psi\right)\in\left[H_{0}^{1}\left(\Omega\right)\right]^{d}\times H^{1}_{N}\left(\Omega\right)\times H^{1}\left(\Omega\right)\times H^{1}\left(\Omega\right)$.
\end{thm}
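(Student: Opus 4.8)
The plan is to regard Theorem~\ref{thm:macroDirichlet-1} as the weak-form counterpart of Theorem~\ref{thm:macroDirichlet}: the claimed regularity is read off from the a priori bounds of Theorem~\ref{thm:aprioriestimate-D} together with the two-scale compactness of Theorem~\ref{thm:homoDirichlet}, and the four integral identities are obtained by passing to the two-scale limit in the microscopic weak formulation of Definition~\ref{def:mathbb=00007BPeps=00007D} (equivalently, by testing the strong macroscopic equations of Theorem~\ref{thm:macroDirichlet} against the indicated test functions and integrating by parts). For the regularity, the uniform estimates $\eqref{eq:thm2.4-5}$--$\eqref{eq:thm2.4-6}$, combined with the $\varepsilon$-uniform interior extension operator from \cite[Lemma~5]{HJ91} and weak lower semicontinuity of norms, give $c_0^{\pm}\in L^2(0,T;H^1(\Omega))$, $\partial_t c_0^{\pm}\in L^2(0,T;(H^1(\Omega))')$, $\bar v_0\in L^2((0,T)\times\Omega)$; the pressure bound $p_0\in L^2((0,T)\times\Omega)$ follows from an $\varepsilon$-uniform Bogovski\u\i/Ne\v cas-type inequality on $\Omega^{\varepsilon}$ (as in the cited works), and the structure of the two-scale Stokes limit forces $p_0$ to be $y$-independent; finally $\tilde\Phi_0\in L^2((0,T)\times\Omega)$ for the cell average $\overline{\tilde\Phi}_0$ is immediate from the representation identity of Theorem~\ref{thm:macroDirichlet} and $c_0^{\pm}\in L^2$.

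For the Stokes--incompressibility block I would insert into the momentum identity the oscillating test function $\varphi_1(t,x)+\varepsilon\,\varphi_1^{1}(t,x,x/\varepsilon)$ with $\varphi_1^{1}(t,x,\cdot)\in H^1_{\#}(Y_l)$ vanishing on $\Gamma$ (extended by zero on $Y_s$), and use $\varepsilon\nabla v_{\varepsilon}\stackrel{2}{\rightharpoonup}\nabla_y v_0$ and $p_{\varepsilon}\stackrel{2}{\rightharpoonup}p_0$; under $\beta\ge\alpha-1$ the right-hand side $\varepsilon^{\beta}(c_{\varepsilon}^{+}-c_{\varepsilon}^{-})\nabla\Phi_{\varepsilon}^{\mathrm{hom}}$ is controlled by $\eqref{eq:thm2.4-4}$ and carries a nonnegative power of $\varepsilon$, so it contributes nothing in the limit. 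This produces the two-pressure Stokes cell system; integrating it over $Y_l$ against the cell correctors $w_j$ of $\eqref{eq:cellproblem2}$ yields $\bar v_0+\mathbb K\nabla_x p_0=0$, and then testing with $y$-independent $\varphi_1\in[H_0^1(\Omega)]^d$ and $\psi\in H^1(\Omega)$ gives the first two identities, the condition $\bar v_0\cdot\mathrm n=0$ being absorbed into the choice of test space. The Poisson identity needs no integration: one multiplies the algebraic relation $\overline{\tilde\Phi}_0=\big(\int_{Y_l}\varphi\,dy\big)(c_0^{+}-c_0^{-})$ by $\varphi_2\in H^1_N(\Omega)\subset L^2(\Omega)$ and integrates over $\Omega$.

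For the Nernst--Planck block I would pass to the limit in the last identity of Definition~\ref{def:mathbb=00007BPeps=00007D} with test functions $\varphi_3(t,x)+\varepsilon\,\varphi_3^{1}(t,x,x/\varepsilon)$: the time term converges by the uniform bound on $\partial_t c_{\varepsilon}^{\pm}$; the diffusive term two-scale converges to the expression in $\nabla_x c_0^{\pm}+\nabla_y c_1^{\pm}$, which after invoking the cell problem $\eqref{eq:cellproblem1}$ and averaging over $Y_l$ becomes $\int_{\Omega}|Y_l|^{-1}\mathbb D\nabla c_0^{\pm}\cdot\nabla\varphi_3\,dx$; the reaction term converges because $c_{\varepsilon}^{\pm}\to c_0^{\pm}$ strongly in $L^2$; and the convective flux $v_{\varepsilon}c_{\varepsilon}^{\pm}$ passes to the limit by the weak--strong product $v_{\varepsilon}\stackrel{2}{\rightharpoonup}v_0$ against $c_{\varepsilon}^{\pm}\to c_0^{\pm}$, giving $\bar v_0 c_0^{\pm}$. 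The delicate point is that the electrostatic drift $\varepsilon^{\gamma}c_{\varepsilon}^{\pm}\nabla\Phi_{\varepsilon}^{\mathrm{hom}}$ must drop out completely: for $\gamma>\alpha-1$ this is immediate from $\eqref{eq:thm2.4-4}$, while at the borderline $\gamma=\alpha-1$ one uses $\varepsilon^{\alpha-1}\nabla\Phi_{\varepsilon}^{\mathrm{hom}}\stackrel{2}{\rightharpoonup}\nabla_y\tilde\Phi_0$, so that against a $y$-independent $\nabla\varphi_3$ the limit equals $\tfrac1{|Y|}\int_{\Omega}\big(\int_{Y_l}\nabla_y\tilde\Phi_0\,dy\big)\cdot\nabla\varphi_3\,dx$, and $\int_{Y_l}\nabla_y\tilde\Phi_0\,dy=\int_{\Gamma}\tilde\Phi_0\,\mathrm n\,dS_y=0$ because $\tilde\Phi_0=0$ on $\Gamma$ (the microscopic Dirichlet trace of Theorem~\ref{thm:homoDirichlet} passed to the two-scale limit on the oscillating surface) while the $\partial Y$ contributions cancel by $Y$-periodicity. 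Choosing finally $\varphi_3^{1}=0$ isolates the macroscopic identity; the natural no-flux boundary term is again built into the test space.

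I expect the main obstacle to be exactly this vanishing of the drift flux at the critical scaling $\gamma=\alpha-1$ (and the analogous check for the forcing in the Stokes block at $\beta=\alpha-1$): it is what makes the Dirichlet macroscopic system structurally simpler than the Neumann one, and it rests on correctly propagating the homogeneous Dirichlet trace of $\tilde\Phi_{\varepsilon}$ on $\Gamma^{\varepsilon}$ to the two-scale limit $\tilde\Phi_0$ and on the divergence-theorem cancellation over $Y_l$. Everything else --- the time and reaction terms, the cell-problem bookkeeping for $c_1^{\pm}$, the $y$-independence of $p_0$, and the integrations by parts matching the stated test-function spaces --- is routine once the a priori estimates and two-scale compactness of Theorems~\ref{thm:aprioriestimate-D} and \ref{thm:homoDirichlet} are available.
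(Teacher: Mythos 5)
Your proposal is correct and follows essentially the same route as the paper's own proof, which is not reproduced in the text but deferred to Theorems 4.11--4.16 of \cite{RMK12} (and \cite{ray2013thesis}), where precisely this passage to the two-scale limit in the microscopic weak formulation of Definition \ref{def:mathbb=00007BPeps=00007D}, with oscillating test functions and the cell problems (\ref{eq:cellproblem1})--(\ref{eq:cellproblem3}), is carried out. One small imprecision to fix when writing it up: at the borderline $\beta=\alpha-1$ the Stokes forcing $\varepsilon^{\beta}\left(c_{\varepsilon}^{+}-c_{\varepsilon}^{-}\right)\nabla\Phi_{\varepsilon}^{\mathrm{hom}}$ carries the power zero and so does not disappear by size alone; its two-scale limit $\left(c_{0}^{+}-c_{0}^{-}\right)\nabla_{y}\tilde{\Phi}_{0}$ is a $y$-gradient and drops out of the averaged Darcy law when tested against the divergence-free, $\Gamma$-vanishing cell correctors $w_{j}$ of (\ref{eq:cellproblem2}) (equivalently, it is absorbed into the first-order pressure), which is the same divergence-theorem cancellation you correctly invoke for the drift term at $\gamma=\alpha-1$ and briefly gesture at for the Stokes block in your final paragraph.
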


The proof of Theorems \ref{thm:homoDirichlet}, \ref{thm:macroDirichlet} and \ref{thm:macroDirichlet-1} are collected from Theorems 4.11--4.16 in \cite{RMK12} and can also be found in \cite{ray2013thesis}.

\subsection{Discussions}
According to proofs of the macroscopic systems in Theorems 4.6, 4.8, 4.10, 4.12, 4.14 and 4.16 cf.  \cite{ray2013thesis}, we formulate here the first-order
limit functions of the systems ($P_{N}^{0}$) and ($P_{D}^{0}$), respectively.

When the electric potential satisfies the Neumann condition on the micro-surface, we deduce that $\tilde{\Phi}_{1}$ can be 
	formulated by
	\[
	\tilde{\Phi}_{1}\left(t,x,y\right)=\sum_{j=1}^{d}\varphi_{j}\left(y\right)\partial_{x_{j}}\tilde{\Phi}_{0}\left(t,x\right),
	\]
	with $\varphi_{j}$ being solutions of the cell problems (\ref{eq:cellproblem1}).
	We also remark that the limit function $p_{0}$ for the pressure is
	proved to be independent of $y$, i.e. $p_{0}\left(t,x,y\right)=p_{0}\left(t,x\right)$,
	due to the structure of the Stokes equation, see Theorem \ref{thm:homoNeumann}.
	Accordingly, the representation of the limit function $v_{0}$ for
	the fluid flow is given by
	\[
	v_{0}\left(t,x,y\right)=\begin{cases}
	-{\displaystyle \sum_{j=1}^{d}}w_{j}\left(y\right)\left[\left(c_{0}^{+}-c_{0}^{-}\right)\partial_{x_{j}}\tilde{\Phi}_{0}\left(t,x\right)+\partial_{x_{j}}p_{0}\left(t,x\right)\right] & \text{if }\;\beta=\alpha,\\
	-{\displaystyle \sum_{j=1}^{d}}w_{j}\left(y\right)\partial_{x_{j}}p_{0}\left(t,x\right) & \text{if }\;\beta>\alpha,
	\end{cases}
	\]
	where $w_{j}=w_{j}\left(y\right)$ for $1\le j\le d$ are the solutions
	of the cell problems (\ref{eq:cellproblem2}). We are able
	to determine the (extended) macroscopic Darcy's law by the following
	pressure:
	\[
	\tilde{p}_{1}\left(t,x,y\right)=p_{1}\left(t,x,y\right)+\left(c_{0}^{+}\left(t,x\right)-c_{0}^{-}\left(t,x\right)\right)\tilde{\Phi}_{1}\left(t,x,y\right),
	\]
	where with $\pi_{j}=\pi_{j}\left(y\right)$ for $1\le j\le d$ are
	the solutions of the cell problems (\ref{eq:cellproblem2}), we compute
	that
	\[
	p_{1}\left(t,x,y\right)=\begin{cases}
	-{\displaystyle \sum_{j=1}^{d}}\pi_{j}\left(y\right)\left[\left(c_{0}^{+}-c_{0}^{-}\right)\partial_{x_{j}}\tilde{\Phi}_{0}\left(t,x\right)+\partial_{x_{j}}p_{0}\left(t,x\right)\right] & \text{if }\;\beta=\alpha,\\
	-{\displaystyle \sum_{j=1}^{d}}\pi_{j}\left(y\right)\partial_{x_{j}}p_{0}\left(t,x\right) & \text{if }\;\beta>\alpha.
	\end{cases}
	\]
	
	On the other hand, the representation of the first-order functions
	$c_{1}^{\pm}$ is
	\[
	c_{1}^{\pm}\left(t,x,y\right)=\begin{cases}
	{\displaystyle \sum_{j=1}^{d}}\left(\varphi_{j}\left(y\right)\partial_{x_{j}}c_{0}^{\pm}\left(t,x\right)\mp c_{0}^{\pm}\left(t,x\right)\partial_{x_{j}}\tilde{\Phi}_{0}\left(t,x\right)\right) & \text{if }\;\gamma=\alpha,\\
	{\displaystyle \sum_{j=1}^{d}}\varphi_{j}\left(y\right)\partial_{x_{j}}c_{0}^{\pm}\left(t,x\right) & \text{if }\;\gamma>\alpha,
	\end{cases}
	\]
	where $\varphi_{j}=\varphi_{j}\left(y\right)$ for $1\le j\le d$
	are the solutions of the cell problems (\ref{eq:cellproblem1}).

When the electric potential satisfies the Dirichlet condition on the micro-surface, we obtain a different scenario. In fact,
	the macroscopic electrostatic potential $\tilde{\Phi}_{0}$ is in this
	case dependent of $y$ and it can be computed by the averaged 
	term $\overline{\tilde{\Phi}}_{0}$ (see Theorem \ref{thm:macroDirichlet}
	and the special case in (\ref{eq:specialpotential})). We obtain the
	same manner with the macroscopic velocity $v_{0}$ in Theorem \ref{thm:macroDirichlet}.
	However, the limit function $p_{0}$ for the pressure remains independent
	of $y$. As a consequence, the representation of the first-order
	functions $c_{1}^{\pm}$ is
	\[
	c_{1}^{\pm}\left(t,x,y\right)=\begin{cases}
	{\displaystyle \sum_{j=1}^{d}}\left(\varphi_{j}\left(y\right)\partial_{x_{j}}c_{0}^{\pm}\left(t,x\right)\mp c_{0}^{\pm}\left(t,x\right)\tilde{\Phi}_{0}\left(t,x\right)\right) & \text{if }\;\gamma=\alpha-1,\\
	{\displaystyle \sum_{j=1}^{d}}\varphi_{j}\left(y\right)\partial_{x_{j}}c_{0}^{\pm}\left(t,x\right) & \text{if }\;\gamma>\alpha-1,
	\end{cases}
	\]
	where $\varphi_{j}=\varphi_{j}\left(y\right)$ for $1\le j\le d$
	are the solutions of the cell problems (\ref{eq:cellproblem1}).

It is worth mentioning that upscaling the microscopic system $\left(P^{\varepsilon}\right)$ is done by the two-scale convergence method. This approach, which aims to derive the limit system, does not require the derivation of the first-order macroscopic velocity, denoted by $v_{1}$ herein. To gain the corrector for the oscillating pressure arising in the Stokes equation, we use the same procedures as in \cite{PM96}, and thus, we need the structure of $v_{1}$.

Following \cite{Sanchez80}, we have in the Neumann case for the electrostatic potential that
\[
v_{1}\left(t,x,\frac{x}{\varepsilon}\right)=\begin{cases}
-{\displaystyle \sum_{i,j=1}^{d}}r_{ij}\left(\dfrac{x}{\varepsilon}\right)\partial_{x_{i}}\left(\left(c_{0}^{+}-c_{0}^{-}\right)\partial_{x_{j}}\tilde{\Phi}_{0}\left(t,x\right)+\partial_{x_{j}}p_{0}\left(t,x\right)\right) & \text{if }\;\beta=\alpha,\\
-{\displaystyle \sum_{i,j=1}^{d}}r_{ij}\left(\dfrac{x}{\varepsilon}\right)\partial_{x_{i}x_{j}}^{2}p_{0}\left(t,x\right) & \text{if }\;\beta>\alpha,
\end{cases}
\]
where $r_{ij}\in H^{1}\left(Y_{l}\right)$ for $1\le i,j\le d$ is
the solution for the following cell problem
\begin{equation}
\begin{cases}
\nabla_{y}\cdot r_{ij}+w_{j}^{i}=\left|Y_{l}\right|^{-1}K_{ij} & \text{in }\;Y_{l},\\
r_{ij}=0 & \text{on }\;\Gamma,\\
r_{ij}\;\text{ periodic in }\;y.
\end{cases}\label{eq:lastcellproblem}
\end{equation}

It holds
\[
v_{1}\left(t,x,\frac{x}{\varepsilon}\right) = -{\displaystyle \sum_{i,j=1}^{d}}r_{ij}\left(\dfrac{x}{\varepsilon}\right)\partial_{x_{i}x_{j}}^{2}p_{0}\left(t,x\right),
\]
provided the electrostatic potential satisfies the Dirichlet boundary data on the micro-surfaces.

\subsection{Auxiliary estimates}
Here, let $Y_{l}$ and $\Omega^{\varepsilon}$
as defined in Subsubsection \ref{subsec:A-geometrical-interpretation}.
\begin{lem}
	\label{lem:ines1}(cf. \cite{KM17}) Let $p^{\varepsilon}\left(x\right):=p\left(x/\varepsilon\right)\in H^{1}\left(\Omega^{\varepsilon}\right)$
	satisfy
	\[
	\bar{p}:=\frac{1}{\left|Y_{l}\right|}\int_{Y_{l}}p\left(y\right)dy,
	\]
	then the following estimate holds:
	\[
	\left\Vert p^{\varepsilon}-\bar{p}\right\Vert _{L^{2}\left(\Omega^{\varepsilon}\right)}\le C\varepsilon^{\frac{1}{2}}\left\Vert p^{\varepsilon}\right\Vert _{H^{1}\left(\Omega^{\varepsilon}\right)}.
	\]
\end{lem}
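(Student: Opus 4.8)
The plan is to localize on the periodic cells, apply the classical Poincaré--Wirtinger inequality on the reference pore $Y_{l}$, rescale, and then absorb the boundary cells separately. Write $\Omega^{\varepsilon}$ as the disjoint union $\Omega^{\varepsilon}=\bigl(\bigcup_{k\in I_{\varepsilon}}\varepsilon Y_{l}^{k}\bigr)\cup B_{\varepsilon}$, where $I_{\varepsilon}$ collects the indices $k$ for which the whole scaled cell $\varepsilon Y^{k}$ lies in $\Omega$ (so $\varepsilon Y_{l}^{k}\subset\Omega^{\varepsilon}$), and $B_{\varepsilon}:=\Omega^{\varepsilon}\setminus\bigcup_{k\in I_{\varepsilon}}\varepsilon Y_{l}^{k}$ is the remaining boundary strip. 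Since $\partial\Omega\in C^{0,1}$ is bounded, $B_{\varepsilon}$ sits within distance $C\varepsilon$ of $\partial\Omega$, hence $|B_{\varepsilon}|\le C\varepsilon$; moreover $|I_{\varepsilon}|\gtrsim\varepsilon^{-d}$, and because $\Gamma^{\varepsilon}\cap\partial\Omega=\emptyset$ a fixed one-sided neighbourhood of $\partial\Omega$ inside $\Omega^{\varepsilon}$ is not perforated.

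For the interior cells I would proceed as follows. On the fixed bounded, connected, Lipschitz domain $Y_{l}$ one has $\|q-\bar q\|_{L^{2}(Y_{l})}\le C_{Y_{l}}\|\nabla_{y}q\|_{L^{2}(Y_{l})}$ for all $q\in H^{1}(Y_{l})$, with $\bar q$ the $Y_{l}$-mean. Applying this to $q=p$ and undoing the change of variables $y=x/\varepsilon$ on each cell, which produces $\|p^{\varepsilon}-\bar p\|_{L^{2}(\varepsilon Y_{l}^{k})}^{2}=\varepsilon^{d}\|p-\bar p\|_{L^{2}(Y_{l})}^{2}$ and $\|\nabla_{x}p^{\varepsilon}\|_{L^{2}(\varepsilon Y_{l}^{k})}^{2}=\varepsilon^{d-2}\|\nabla_{y}p\|_{L^{2}(Y_{l})}^{2}$, gives $\|p^{\varepsilon}-\bar p\|_{L^{2}(\varepsilon Y_{l}^{k})}\le C\varepsilon\,\|\nabla p^{\varepsilon}\|_{L^{2}(\varepsilon Y_{l}^{k})}$ for every $k$. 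Squaring and summing over $k\in I_{\varepsilon}$ yields $\|p^{\varepsilon}-\bar p\|_{L^{2}(\bigcup_{k\in I_{\varepsilon}}\varepsilon Y_{l}^{k})}\le C\varepsilon\|\nabla p^{\varepsilon}\|_{L^{2}(\Omega^{\varepsilon})}\le C\varepsilon\|p^{\varepsilon}\|_{H^{1}(\Omega^{\varepsilon})}$, which is in fact sharper than claimed.

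Next I would treat the strip $B_{\varepsilon}$, where subtracting the \emph{fixed} constant $\bar p$ rather than local means forces a loss. The elementary point is $|\bar p|\le C\|p^{\varepsilon}\|_{L^{2}(\Omega^{\varepsilon})}$: indeed $|\bar p|^{2}\le|Y_{l}|^{-1}\|p\|_{L^{2}(Y_{l})}^{2}$ by Cauchy--Schwarz, while $\|p^{\varepsilon}\|_{L^{2}(\Omega^{\varepsilon})}^{2}\ge\sum_{k\in I_{\varepsilon}}\|p^{\varepsilon}\|_{L^{2}(\varepsilon Y_{l}^{k})}^{2}=|I_{\varepsilon}|\,\varepsilon^{d}\|p\|_{L^{2}(Y_{l})}^{2}\ge c\,\|p\|_{L^{2}(Y_{l})}^{2}$ since $|I_{\varepsilon}|\gtrsim\varepsilon^{-d}$; consequently $\|p^{\varepsilon}-\bar p\|_{H^{1}(\Omega^{\varepsilon})}\le C\|p^{\varepsilon}\|_{H^{1}(\Omega^{\varepsilon})}$. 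Then I would invoke the standard boundary-layer estimate $\|w\|_{L^{2}(B_{\varepsilon})}^{2}\le C\varepsilon\|w\|_{H^{1}(\Omega^{\varepsilon})}^{2}$ for $w\in H^{1}(\Omega^{\varepsilon})$, valid because $B_{\varepsilon}$ is a strip of width $\lesssim\varepsilon$ about $\partial\Omega$ lying in the non-perforated part: it follows by flattening $\partial\Omega$ locally, writing $w(x'+s\nu)=w(x')+\int_{0}^{s}\partial_{\nu}w\,dt$, squaring, integrating over $s\in(0,C\varepsilon)$, and absorbing the boundary term by the ($\varepsilon$-independent) trace inequality on $\partial\Omega$. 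Applying this to $w=p^{\varepsilon}-\bar p$ gives $\|p^{\varepsilon}-\bar p\|_{L^{2}(B_{\varepsilon})}\le C\varepsilon^{1/2}\|p^{\varepsilon}\|_{H^{1}(\Omega^{\varepsilon})}$.

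Adding the interior contribution ($O(\varepsilon)$) and the boundary-strip contribution ($O(\varepsilon^{1/2})$) and using $\varepsilon\le\varepsilon^{1/2}$ for $\varepsilon\in(0,1)$ yields the claimed estimate. The main obstacle is precisely the strip $B_{\varepsilon}$: all constants there must be kept independent of $\varepsilon$, which is what makes the trace step the bottleneck and degrades the rate from $\varepsilon$ to $\varepsilon^{1/2}$ (if $\Omega$ were an exact union of $\varepsilon$-cells then $B_{\varepsilon}=\emptyset$ and the sharper rate would hold). The whole argument can equivalently be phrased through the periodic unfolding operator, the strip being handled by the same boundary-layer estimate.
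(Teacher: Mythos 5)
The paper never proves this lemma itself (it is quoted from \cite{KM17}), so your argument has to stand on its own, and in outline it does: the cell-wise Poincar\'e--Wirtinger inequality with the rescaling $\left\Vert p^{\varepsilon}-\bar{p}\right\Vert _{L^{2}\left(\varepsilon Y_{l}^{k}\right)}\le C\varepsilon\left\Vert \nabla p^{\varepsilon}\right\Vert _{L^{2}\left(\varepsilon Y_{l}^{k}\right)}$, the bound $\left|\bar{p}\right|\le C\left\Vert p^{\varepsilon}\right\Vert _{L^{2}\left(\Omega^{\varepsilon}\right)}$ obtained from $\left|I_{\varepsilon}\right|\gtrsim\varepsilon^{-d}$, and the final bookkeeping are all correct, and you rightly locate the loss from rate $\varepsilon$ to $\varepsilon^{1/2}$ in the boundary strip. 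Note also that under the geometric hypothesis of Subsection \ref{subsec:A-geometrical-interpretation} that $\Omega$ is completely covered by the regular array of $\varepsilon$-scaled cells, your set $B_{\varepsilon}$ is negligible and the interior computation alone already yields (a sharper form of) the claim, so your proposal is in fact more general than what the paper needs.

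The one step that needs repair is your justification of the strip estimate $\left\Vert w\right\Vert _{L^{2}\left(B_{\varepsilon}\right)}^{2}\le C\varepsilon\left\Vert w\right\Vert _{H^{1}\left(\Omega^{\varepsilon}\right)}^{2}$. You derive the $\varepsilon$-independence of the trace constant from the claim that $\Gamma^{\varepsilon}\cap\partial\Omega=\emptyset$ provides a \emph{fixed} unperforated neighbourhood of $\partial\Omega$ inside $\Omega^{\varepsilon}$; this is not implied by the paper's assumptions, since the perforations approach $\partial\Omega$ up to distance of order $\varepsilon$, so the unperforated collar has width $O(\varepsilon)$, and a trace inequality taken directly on such a thin collar carries a constant of order $\varepsilon^{-1/2}$, which would destroy exactly the gain you need. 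Two clean repairs are available. First, use the $\varepsilon$-uniform interior extension $H^{1}\left(\Omega^{\varepsilon}\right)\to H^{1}\left(\Omega\right)$ recorded in Subsection \ref{subsec:A-geometrical-interpretation} (after \cite{HJ91}); the standard boundary-layer estimate in the unperforated domain $\Omega$ then applies to the extension of $p^{\varepsilon}-\bar{p}$ and restricts to $B_{\varepsilon}$. Second, and more elementarily, you can bypass any trace argument by exploiting periodicity on the strip itself: $B_{\varepsilon}$ meets only $O\left(\varepsilon^{-(d-1)}\right)$ cells, on each of which $\left\Vert p^{\varepsilon}\right\Vert _{L^{2}\left(\varepsilon Y_{l}^{k}\cap\Omega^{\varepsilon}\right)}^{2}\le\varepsilon^{d}\left\Vert p\right\Vert _{L^{2}\left(Y_{l}\right)}^{2}$, so $\left\Vert p^{\varepsilon}\right\Vert _{L^{2}\left(B_{\varepsilon}\right)}^{2}\le C\varepsilon\left\Vert p\right\Vert _{L^{2}\left(Y_{l}\right)}^{2}\le C\varepsilon\left\Vert p^{\varepsilon}\right\Vert _{L^{2}\left(\Omega^{\varepsilon}\right)}^{2}$, while $\left|\bar{p}\right|\left|B_{\varepsilon}\right|^{1/2}\le C\varepsilon^{1/2}\left\Vert p^{\varepsilon}\right\Vert _{L^{2}\left(\Omega^{\varepsilon}\right)}$; together these give the strip bound with no trace theorem at all. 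With either repair your proof is complete and consistent with the cell-wise arguments used in the cited source.
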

\begin{lem}
	\label{lem:ines2} Assume $\partial\Omega\in C^{k}$
	for $k\ge 4$ holds. Then, there exist $\delta_{0}>0$ and
	a function $\eta^{\delta}\in\left[C^{k-1}\left(\overline{\Omega}\right)\right]^{d}$
	such that $\eta^{\delta}=\bar{v}_{0}$ on $\partial\Omega$ with $\bar{v}_{0}$
	being the averaged macroscopic velocity defined in Theorem \ref{thm:macroNeumann},
	$\nabla_{x}\cdot\eta^{\delta}=0$ in $\Omega$ and for any $1\le q\le\infty$
	and $0\le\ell\le k-1$, the following estimate holds:
	\begin{align}\label{bdttrum-6}
	\left\Vert \nabla^{\ell}\eta^{\delta}\right\Vert _{L^{q}\left(\Omega\right)}\le C\delta^{\frac{1}{q}-\ell}\quad\text{for }\delta\in\left(0,\delta_{0}\right].
	\end{align}
\end{lem}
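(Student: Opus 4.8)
The plan is to construct $\eta^{\delta}$ as a divergence-free field supported in the $\delta$-collar $U_{\delta}:=\{x\in\Omega:\mathrm{dist}(x,\partial\Omega)<\delta\}$ which coincides with $\bar{v}_{0}$ in a neighbourhood of $\partial\Omega$, and then to read off the powers of $\delta$ in \eqref{bdttrum-6} from the scaling of the localising cut-off, the elementary bound $|U_{\delta}|\le C\delta$, and a cancellation forced by the constraint $\bar{v}_{0}\cdot\mathrm{n}=0$ on $\partial\Omega$. First I would record the two ingredients. By elliptic/parabolic regularity for the macroscopic system of Theorem \ref{thm:macroNeumann}, together with $\partial\Omega\in C^{k}$ and the standing smoothness of the data, the averaged velocity satisfies $\bar{v}_{0}\in\left[C^{k}(\overline{\Omega})\right]^{d}$, $\nabla_{x}\cdot\bar{v}_{0}=0$ in $\Omega$ and $\bar{v}_{0}\cdot\mathrm{n}=0$ on $\partial\Omega$. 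Secondly, because $\bar{v}_{0}\cdot\mathrm{n}$ vanishes on $\partial\Omega$, there is an antisymmetric matrix field $A=(A_{ij})_{1\le i,j\le d}\in\left[C^{k}(\overline{\Omega})\right]^{d\times d}$, supported in a fixed collar $\{\rho<\rho_{\ast}\}$ of $\partial\Omega$, with $A=0$ on $\partial\Omega$ and $(\nabla_{x}\cdot A)_{i}:=\sum_{j}\partial_{x_{j}}A_{ij}=(\bar{v}_{0})_{i}$ on $\partial\Omega$. Such an $A$ is produced by flattening $\partial\Omega$ in finitely many boundary charts: writing $\{x_{d}=0\}=\partial\Omega$, $\bar{v}_{0}=(\bar{v}_{0}',\bar{v}_{0}^{d})$ with $\bar{v}_{0}^{d}|_{x_{d}=0}=0$, one takes $A_{id}=-A_{di}=\int_{0}^{x_{d}}\bar{v}_{0}^{i}(x',s)\,ds$ for $1\le i\le d-1$ and all other entries zero, whence antisymmetry, $A|_{\{x_{d}=0\}}=0$ and $\nabla\cdot A=\bar{v}_{0}$ hold there (the last identity uses $\sum_{i<d}\partial_{x_{i}}\bar{v}_{0}^{i}=-\partial_{x_{d}}\bar{v}_{0}^{d}$); these local pieces are then glued by a partition of unity supported near $\partial\Omega$, the gluing error in $\nabla_{x}\cdot A$ still vanishing on $\partial\Omega$ since every local piece does. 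This is the sole place where $\bar{v}_{0}\cdot\mathrm{n}=0$ enters.

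Next I would fix the cut-off. Let $\theta\in C^{\infty}(\mathbb{R})$ with $\theta\equiv1$ on $(-\infty,\tfrac12]$, $\theta\equiv0$ on $[1,\infty)$, $0\le\theta\le1$, and let $\rho(x):=\mathrm{dist}(x,\partial\Omega)$, which is $C^{k}$ on $\{\rho<\rho_{\ast}\}$ since $\partial\Omega\in C^{k}$; extend $\rho$ smoothly to $\overline{\Omega}$. Set $\delta_{0}:=\rho_{\ast}$ and, for $\delta\in(0,\delta_{0}]$, $\theta_{\delta}(x):=\theta\!\left(\rho(x)/\delta\right)$, so that $\theta_{\delta}\equiv1$ on $\{\rho\le\delta/2\}$, $\mathrm{supp}\,\theta_{\delta}\subset U_{\delta}$, $|U_{\delta}|\le C\delta$, and $\|\nabla^{j}\theta_{\delta}\|_{L^{\infty}(\Omega)}\le C_{j}\,\delta^{-j}$ for $0\le j\le k$. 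Define
\[
\eta^{\delta}:=\nabla_{x}\cdot(\theta_{\delta}A),\qquad\text{that is,}\qquad (\eta^{\delta})_{i}=\sum_{j=1}^{d}\partial_{x_{j}}\!\left(\theta_{\delta}A_{ij}\right)=\theta_{\delta}\,(\nabla_{x}\cdot A)_{i}+\sum_{j=1}^{d}(\partial_{x_{j}}\theta_{\delta})\,A_{ij}.
\]
By antisymmetry of $\theta_{\delta}A$ one has $\nabla_{x}\cdot\eta^{\delta}=\sum_{i,j}\partial_{x_{i}}\partial_{x_{j}}(\theta_{\delta}A_{ij})=0$ in $\Omega$; on $\partial\Omega$ we have $\theta_{\delta}\equiv1$ and $\nabla\theta_{\delta}=0$, hence $\eta^{\delta}=\nabla_{x}\cdot A=\bar{v}_{0}$ there; and $\eta^{\delta}\in\left[C^{k-1}(\overline{\Omega})\right]^{d}$ because $\theta_{\delta}A\in\left[C^{k}(\overline{\Omega})\right]^{d\times d}$ is compactly supported in the collar.

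It remains to prove \eqref{bdttrum-6}. Since $\eta^{\delta}$ and its derivatives are supported in $U_{\delta}$ with $|U_{\delta}|\le C\delta$, it suffices to establish the pointwise bound $|\nabla^{\ell}\eta^{\delta}(x)|\le C\,\delta^{-\ell}$ for $x\in U_{\delta}$ and $0\le\ell\le k-1$: then $\|\nabla^{\ell}\eta^{\delta}\|_{L^{q}(\Omega)}\le C\,\delta^{-\ell}\,|U_{\delta}|^{1/q}\le C\,\delta^{\frac1q-\ell}$ simultaneously for every $q\in[1,\infty]$, which is \eqref{bdttrum-6}. For the first summand $\theta_{\delta}(\nabla_{x}\cdot A)_{i}$, Leibniz' rule gives terms $\nabla^{j}\theta_{\delta}\,\nabla^{\ell-j}(\nabla_{x}\cdot A)_{i}$ with $0\le j\le\ell$, each bounded by $C\delta^{-j}\|A\|_{C^{k}}\le C\delta^{-\ell}$. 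For the second summand $\sum_{j}(\partial_{x_{j}}\theta_{\delta})A_{ij}$, Leibniz gives terms $\nabla^{m+1}\theta_{\delta}\,\nabla^{\ell-m}A_{ij}$ with $0\le m\le\ell$: when $m<\ell$ this is bounded by $C\delta^{-(m+1)}\|A\|_{C^{k}}\le C\delta^{-\ell}$, and when $m=\ell$ it is bounded by $C\delta^{-(\ell+1)}|A_{ij}(x)|$, where now $A=0$ on $\partial\Omega$ together with $A\in C^{1}$ gives $|A(x)|\le C\,\rho(x)\le C\delta$ on $U_{\delta}$, so the term is again $\le C\delta^{-(\ell+1)}\cdot C\delta=C\delta^{-\ell}$. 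Summing over the finitely many Leibniz terms yields $|\nabla^{\ell}\eta^{\delta}|\le C\delta^{-\ell}$ on $U_{\delta}$, and the proof is complete.

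The one genuinely non-routine step is the construction of the antisymmetric potential $A$ vanishing on $\partial\Omega$ and matching the normal divergence of $\bar{v}_{0}$ there; everything else is bookkeeping with the chain and Leibniz rules, and the hypothesis $k\ge4$ only provides comfortable slack in the derivative count (for $\rho$, for $A$, and for $\bar{v}_{0}$). An alternative avoiding the potential is the classical cut-off-plus-Bogovskii argument: put $V^{\delta}:=\theta_{\delta}\,(\bar{v}_{0}\circ\pi)$ with $\pi$ the nearest-point projection onto $\partial\Omega$, note that the dangerous $\delta^{-1}$-term drops out since $(\bar{v}_{0}\circ\pi)\cdot\nabla\rho\equiv0$ in the collar (as $\bar{v}_{0}\cdot\mathrm{n}=0$ on $\partial\Omega$), so that $\nabla_{x}\cdot V^{\delta}=\theta_{\delta}\,\nabla_{x}\cdot(\bar{v}_{0}\circ\pi)$ obeys $\|\nabla^{\ell}(\nabla_{x}\cdot V^{\delta})\|_{L^{q}}\le C\delta^{\frac1q-\ell}$ and has vanishing mean, and then correct it by subtracting its Bogovskii lifting; that route, however, needs $1<q<\infty$ for the Bogovskii operator, so the explicit construction above is preferable when the endpoints $q\in\{1,\infty\}$ are required.
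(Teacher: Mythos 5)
Your construction is correct, and it realizes the same underlying idea as the paper's proof --- produce $\eta^{\delta}$ as the ``divergence of an antisymmetric potential'' so that $\nabla_{x}\cdot\eta^{\delta}=0$ is automatic, localize to a $\delta$-collar, and read off $\delta^{\frac1q-\ell}$ from the collar volume $\sim\delta$ and derivative bounds $\sim\delta^{-\ell}$ --- but the technical route is genuinely different. The paper (following \cite{PM96} and Temam) sets $\eta^{\delta}=\operatorname{curl}\psi$ with the explicit profile $\psi=z\,\mathrm{e}^{-z/\delta}\,V(0,\xi)\cdot\tau$, so the boundary matching is encoded in the fact that $\partial_{z}\bigl(z\mathrm{e}^{-z/\delta}\bigr)=1$ at $z=0$, the localization is only exponential (hence the extension step at $\bar{\mathcal S}_{\gamma}$), and the $L^{q}$ bounds come from integrating the explicit profile. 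You instead build an antisymmetric matrix potential $A$ of the trace of $\bar v_{0}$ vanishing on $\partial\Omega$, multiply by a sharp cut-off $\theta(\rho/\delta)$, and recover the critical cancellation in the top-order Leibniz term from $|A|\le C\rho\le C\delta$ on $\operatorname{supp}\nabla\theta_{\delta}$; this makes the use of $\bar v_{0}\cdot\mathrm n=0$ completely explicit, works verbatim in any dimension $d$ (where the paper's scalar-looking ``$\operatorname{curl}\psi$'' really needs your 2-form/antisymmetric-matrix formulation anyway), gives a compactly supported corrector with no extension step, and covers the endpoints $q\in\{1,\infty\}$ that a Bogovskii-based shortcut would miss, as you rightly note. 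The one place where your write-up is slightly more cavalier than it should be is the chart-based construction of $A$: antisymmetry and the identity $\sum_{j}\partial_{x_{j}}A_{ij}=(\bar v_{0})_{i}$ are not preserved component-wise under the boundary-flattening diffeomorphism (one must transform $A$ as a 2-form and $\bar v_{0}$ by a Piola-type map); a cleaner, chart-free fix is $A_{ij}:=-\rho\,\bigl(\partial_{i}\rho\,(\bar v_{0}\circ\pi)_{j}-\partial_{j}\rho\,(\bar v_{0}\circ\pi)_{i}\bigr)$ in the collar, which is antisymmetric, vanishes on $\partial\Omega$, and satisfies $\nabla_{x}\cdot A=\bar v_{0}$ there because $|\nabla\rho|=1$ and $(\bar v_{0}\circ\pi)\cdot\nabla\rho=0$. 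This is a repairable detail of the same order of informality as the paper's own treatment, so it does not affect the validity of your argument.
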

\begin{proof}
	We adapt the notation from \cite{PM96} (see Lemma 1) to our proof here. It is well known from \cite[Lemma 14.16]{GT83} that there exists an $\varepsilon$-independent
	$\gamma>0$ such that the distance function $z\left(x\right)=\text{dist}\left(x,\partial\Omega\right)$
	belongs to $C^{k}\left(\mathcal{S}_{\gamma}\right)$ where \begin{align}
	 \mathcal{S}_{\gamma}:=\left\{ x\in\overline{\Omega}:\text{dist}\left(x,\partial\Omega\right)\le\gamma\right\}.
	\end{align}
	By definition, we have
	\[
	\partial\Omega:=\left\{ x\in\mathbb{R}^{d}:z\left(x\right)=0\right\} \;\text{and }\text{n}:=-\frac{\nabla z}{\left|\nabla z\right|}\quad\text{for }x\in\mathcal{S}_{\gamma}.
	\]
	
	If we define a function $V\left(z,\xi\right)$ by
	\begin{equation}
	V\left(z,\xi\right):=-\frac{\bar{v}_{0}\left(x\right)}{\left|\nabla z\left(x\right)\right|}\quad\text{for }x=x\left(z,\xi\right)\in\mathcal{S}_{\gamma}\label{eq:functionV}
	\end{equation}
	where $\xi$ is the tangential component of $z$ along $\partial\Omega$. 
	We observe that $\left|\nabla z\right|>0$ for $x\in\mathcal{S}_{\gamma}$
	and the trace $V\left(0,\xi\right)$ is well-defined as a function
	in $C^{k}\left(\mathcal{S}_{\gamma}\right)$.
	
	Following the same spirit of the argument as in Temam \cite{Temam79} in e.g. Proposition 2.3, we aim to take $\eta^{\delta}$
	as $\text{curl}\psi$, where $\psi$ is chosen in such a way that
	\[
	\frac{\partial\psi}{\partial\tau}=0\quad\text{on }\partial\Omega,
	\]
	where we denote by $\tau$ the tangential component of $\psi$, and
	\[
	\nabla\psi\cdot\text{n}=\bar{v}_{0}\cdot\tau\quad\text{on }\partial\Omega.
	\]
	
	Note from the structure of the macroscopic Stokes system (cf. Theorem \ref{thm:macroNeumann} and Theorem \ref{thm:macroDirichlet}) that $\bar{v}_{0}\cdot\text{n}=0$
	on $\partial\Omega$ and from the fact that the tangential component
	is different from 0 in principle. We aim to choose $\psi=0$ on $\partial\Omega$.
	Based on the function $V\left(z,\xi\right)$, defined in (\ref{eq:functionV}),
	we choose
	\[
	\psi\left(x\right)=z\left(x\right)\text{exp}\left(-\frac{z\left(x\right)}{\delta}\right)V\left(0,\xi\right)\cdot\tau\left(x\right).
	\]
	
	Due to the presence of $z$, it is clear that $\psi=0$ on $\partial\Omega$.
	Furthermore, we can check that
	\[
	\nabla\psi\cdot\text{n}=-\frac{\nabla z}{\left|\nabla z\right|}\cdot\left(\nabla z\frac{\partial\psi}{\partial z}\right)=-\left|\nabla z\right|\left(1-\frac{z}{\delta}\right)\text{exp}\left(-\frac{z}{\delta}\right)V\left(0,\xi\right)\cdot\tau\left(x\right)=\bar{v}_{0}\cdot\tau
	\]
	holds on $\partial\Omega$.
	
	Therefore, we are now allowed to take $\eta^{\delta}=\text{curl}\psi$
	in $\mathcal{S}_{\gamma}$.
	
	We can now complete the proof of the lemma. Indeed,
	we estimate that
	\begin{align*}
	\left\Vert \nabla\psi\right\Vert _{L^{q}\left(\mathcal{S}_{\gamma}\right)}^{q} & \le C\int_{\mathcal{S}_{\gamma}}\left(\left|\left(1-\frac{z}{\delta}\right)\text{exp}\left(-\frac{z}{\delta}\right)V\left(0,\xi\right)\right|^{2}+\left|z\text{exp}\left(-\frac{z}{\delta}\right)\frac{\partial V}{\partial\xi}\left(0,\xi\right)\right|^{2}\right)^{\frac{q}{2}}dx\\
	& \le C\delta.
	\end{align*}
	
	Owning to the $C^{k}$-smoothness of $\partial\Omega$, we can proceed
	as above to obtain the following high-order estimate: 
	\[
	\left\Vert \nabla^{\ell+1}\psi\right\Vert _{L^{q}\left(\mathcal{S}_{\gamma}\right)}\le C\delta^{\frac{1}{q}-\ell}\quad\text{for }0\le\ell\le k-1.
	\]
	
	Hence, for $\delta\ll\gamma$ the function $\psi$ is exponentially
	small at $\bar{\mathcal{S}}_{\gamma}=\left\{ x\in\overline{\Omega}:\text{dist}\left(x,\partial\Omega\right)=\gamma\right\} $
	and we can extend it to a function, which is denoted again by $\psi$,
	in $C^{k}\left(\overline{\Omega}\right)$ such that it satisfies $\eta^{\delta}=\text{curl}\psi$
	and thus the estimate \eqref{bdttrum-6}.
\end{proof}

By Lemma \ref{lem:ines2}, we can introduce a cut-off function $m^{\varepsilon}\in\mathcal{D}\left(\overline{\Omega}\right)$
corresponding to $\partial\Omega$, satisfying
\[
m^{\varepsilon}(x)=\begin{cases}
0 & \text{if }\text{dist}\left(x,\partial\Omega\right)\le\varepsilon,\\
1 & \text{if }\text{dist}\left(x,\partial\Omega\right)\ge2\varepsilon,
\end{cases}\quad\text{and}\quad\left\Vert \nabla^{\ell}m^{\varepsilon}\right\Vert _{L^{\infty}\left(\Omega\right)}\le C\varepsilon^{-\ell}\text{ for }\ell\in\left[0,2\right].
\]

As a consequence, one can also show that
\begin{equation}
	\left\Vert 1-m^{\varepsilon}\right\Vert _{L^{2}\left(\Omega^{\varepsilon}\right)}\le C\varepsilon^{\frac{1}{2}},\quad\varepsilon\left\Vert \nabla m^{\varepsilon}\right\Vert _{L^{2}\left(\Omega^{\varepsilon}\right)}\le C\varepsilon^{\frac{1}{2}}.\label{eq:cutin}
\end{equation}

\begin{lem}
	\label{lem:ines3}(cf. \cite[Lemma 1, Appendix]{Sanchez80}) For any $u\in H_{0}^{1}(\Omega^{\varepsilon})$, it holds
	\[
	\left\Vert u\right\Vert _{L^{2}\left(\Omega^{\varepsilon}\right)}\le C\varepsilon\left\Vert \nabla u\right\Vert _{\left[L^{2}\left(\Omega^{\varepsilon}\right)\right]^{d}}.
	\]
\end{lem}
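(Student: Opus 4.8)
The plan is to use the periodic structure of $\Omega^{\varepsilon}$ directly: decompose the perforated domain into the $\varepsilon$-scaled copies of the reference pore $Y_{l}$, rescale each copy back to the unit cell by $x=\varepsilon y$, and on the reference cell invoke a fixed Poincar\'e-type inequality valid for functions whose trace vanishes on the solid interface $\Gamma$. Write $\Omega^{\varepsilon}$ as the (essentially disjoint) union of the perforated cells $\varepsilon Y_{l}^{k}$, $k$ ranging over the relevant index set. Since $u\in H_{0}^{1}(\Omega^{\varepsilon})$ has zero trace on $\partial\Omega^{\varepsilon}\supseteq\Gamma^{\varepsilon}$, and since $\varepsilon\,\partial Y_{s}^{k}\subset\Gamma^{\varepsilon}$ for every $k$ (the solid grains are mutually disjoint and do not touch $\partial Y$), each restriction $u|_{\varepsilon Y_{l}^{k}}$ belongs to $H^{1}(\varepsilon Y_{l}^{k})$ and vanishes on the portion $\varepsilon\Gamma^{k}$ of its boundary.

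\textbf{Step 1 (reference-cell inequality).} First I would establish that there is a constant $C=C(Y_{l})>0$, independent of $\varepsilon$, with
\begin{equation}\label{eq:refcellpoincare}
\|w\|_{L^{2}(Y_{l})}\le C\|\nabla_{y}w\|_{[L^{2}(Y_{l})]^{d}}\qquad\text{for all }w\in H^{1}(Y_{l})\text{ with }w=0\text{ on }\Gamma.
\end{equation}
This follows by the standard compactness–contradiction argument: if \eqref{eq:refcellpoincare} failed, one could pick $w_{n}$ in that subspace with $\|w_{n}\|_{L^{2}(Y_{l})}=1$ and $\|\nabla_{y}w_{n}\|_{L^{2}(Y_{l})}\to0$; by Rellich's theorem a subsequence converges weakly in $H^{1}(Y_{l})$ and strongly in $L^{2}(Y_{l})$ to some $w$, which must be constant (as $\nabla_{y}w=0$) and nonzero (as $\|w\|_{L^{2}(Y_{l})}=1$); but compactness of the trace operator $H^{1}(Y_{l})\to L^{2}(\Gamma)$ (legitimate since $Y_{l}$ is Lipschitz) forces $w|_{\Gamma}=0$, contradicting $w$ being a nonzero constant because $\Gamma$ has positive $(d-1)$-dimensional Hausdorff measure. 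One may alternatively just quote this inequality, since it is precisely what underlies the unique solvability of the cell problems \eqref{eq:cellproblem1}--\eqref{eq:cellproblem3}.

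\textbf{Step 2 (scaling and summation).} Fix a cell and set $w(y):=u(\varepsilon(y+k))$ for $y\in Y_{l}$; then $w\in H^{1}(Y_{l})$, $w=0$ on $\Gamma$, and $\nabla_{y}w(y)=\varepsilon(\nabla_{x}u)(\varepsilon(y+k))$. The change of variables gives $\|w\|_{L^{2}(Y_{l})}^{2}=\varepsilon^{-d}\|u\|_{L^{2}(\varepsilon Y_{l}^{k})}^{2}$ and $\|\nabla_{y}w\|_{[L^{2}(Y_{l})]^{d}}^{2}=\varepsilon^{2-d}\|\nabla_{x}u\|_{[L^{2}(\varepsilon Y_{l}^{k})]^{d}}^{2}$, so \eqref{eq:refcellpoincare} turns into $\|u\|_{L^{2}(\varepsilon Y_{l}^{k})}^{2}\le C^{2}\varepsilon^{2}\|\nabla_{x}u\|_{[L^{2}(\varepsilon Y_{l}^{k})]^{d}}^{2}$. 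Summing over all cells and taking square roots yields the assertion with the same constant $C$. Cells meeting $\partial\Omega$ need no separate treatment: there $u$ vanishes on an even larger boundary portion, so the same (or a sharper) local estimate applies; one could also dispatch them by noting that their union has measure $O(\varepsilon)$ and using the $\varepsilon$-uniform interior extension of $u$ into $H^{1}(\Omega)$.

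\textbf{Main obstacle.} There is no genuine difficulty; the one point deserving care is the $\varepsilon$-independence of the constant, which rests entirely on Step 1 — that is, on the reference-cell Poincar\'e constant depending only on the geometry of $Y_{l}$ (in particular on $\Gamma$ carrying positive surface measure and on $Y_{l}$ being Lipschitz so that traces are well controlled). The dilation $x=\varepsilon y$ in Step 2 then transfers this fixed constant to every perforation, and the factor $\varepsilon$ in the statement is exactly the Jacobian mismatch between the $L^{2}$-scaling ($\varepsilon^{d/2}$) and the gradient $L^{2}$-scaling ($\varepsilon^{d/2-1}$) under that dilation.
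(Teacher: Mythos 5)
Your proof is correct: the paper itself only cites this inequality (Sanchez-Palencia, Lemma 1 of the Appendix), and your argument — a reference-cell Poincar\'e inequality for functions vanishing on $\Gamma$, transferred to each perforation by the dilation $x=\varepsilon y$ and summed over the cells — is exactly the classical proof behind that citation, with the correct scaling bookkeeping. The only caveat, the treatment of cells cut by $\partial\Omega$, is moot here since the paper assumes $\Omega$ is exactly covered by the $\varepsilon$-cells and $\partial\Omega\cap\Gamma^{\varepsilon}=\emptyset$.
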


\section{Macroscopic reconstructions and corrector estimates}\label{mainsec}

In this section, we begin by introducing the so-called macroscopic
reconstructions and provide supplementary estimates needed for the
proof of our main results stated in Theorem \ref{thm:main1-6} and Theorem
\ref{thm:main2-6}. Our working methodology was used in \cite{Eck2} and
successfully applied to derive the corrector estimates for a thermo-diffusion
system in a uniformly periodic medium (cf. \cite{KM17}) and an advection-diffusion-reaction
system in a locally-periodic medium (cf. \cite{Tycho}). In principle,
the asymptotic expansion can be justified by estimating the differences
of the solutions of the microscopic model $\left(P^{\varepsilon}\right)$
and macroscopic reconstructions which can be defined from the macroscopic
models $\left(P_{N}^{0}\right)$ and $\left(P_{D}^{0}\right)$.

Our main results correspond to two cases:
\begin{tcolorbox}
\begin{description}
	\item [{Case\;1:}] The electric potential satisfies the Neumann boundary condition at the boundary of the perforations
	\item [{Case\;2:}] The electric potential satisfies the Dirichlet boundary condition at the boundary of the perforations
\end{description}
\end{tcolorbox}

\begin{rem}
	To gain the corrector estimates, we require more regularity assumptions on the involved functions as well as the smoothness of the boundaries of the macroscopic domain; compare to the assumptions obtained when upscaling $\left(P^{\varepsilon}\right)$. In fact, it is worth pointing out that in Theorem \ref{thm:main1-6} and Theorem \ref{thm:main2-6} we assume the regularity properties on the limit functions, postulated in Theorem \ref{thm:macroNeumann-1} for Case 1 and in Theorem \ref{thm:macroDirichlet-1} for Case 2, as follows:
	\begin{equation}\label{dk1-6}
	\tilde{\Phi}_{0},c_{0}^{\pm}\in W^{1,\infty}\left(\Omega^{\varepsilon}\right)\cap H^{2}\left(\Omega^{\varepsilon}\right),\bar{v}_{0}\in L^{\infty}\left(\Omega^{\varepsilon}\right).
	\end{equation}
	The cell functions $\varphi_{j}$ for $1\le j\le d$ solving
	the family of cell problems (\ref{eq:cellproblem1}) are supposed to fulfill 
	\begin{equation}\label{dk2-6}
	\varphi_{j}\in W^{1+s,2}\left(Y_{l}\right) \text{ for } s>d/2.
	\end{equation}
	Moreover, the cell functions $w_{j}^{i}$, $\pi_{j}$ and $r_{ij}$ for $1\le i,j\le d$ solving the cell problems (\ref{eq:cellproblem2})
	and (\ref{eq:lastcellproblem}), respectively, satisfy \begin{equation}\label{dk3-6}
	w_{j}^{i}\in W^{2+s,2}\left(Y_{l}\right),
	\pi_{j}\in W^{1+s,2}\left(Y_{l}\right) \text{ and } r_{ij}\in W^{1+s,2}\left(Y_{l}\right)
	\text{ for } s>d/2.
	\end{equation}
	In addition, we stress that the corrector estimates for the Stokes equation can be gained if we take  $\partial\Omega\in C^{4}$. This assumption is only needed to handle Lemma \ref{lem:ines2}.
	 
\end{rem}

\subsection{Main results}
\begin{thm}
	\label{thm:main1-6}\emph{\textbf{Corrector estimates for Case 1}}
	
	Assume $\left(\text{A}_{1}\right)-\left(\text{A}_{6}\right)$.
	Let the quadruples $\left(v_{\varepsilon},p_{\varepsilon},\Phi_{\varepsilon},c_{\varepsilon}^{\pm}\right)$
	and $\left(v_{0},p_{0},\Phi_{0},c_{0}^{\pm}\right)$ be weak solutions
	to $\left(P^{\varepsilon}\right)$ and $\left(P_{N}^{0}\right)$ in
	the sense of Definition \ref{def:mathbb=00007BPeps=00007D} and Theorem
	\ref{thm:macroNeumann-1}, respectively. Furthermore, we assume that
	the limit solutions satisfy the regularity property \eqref{dk1-6}.
	Let $\varphi_{j}$ for $1\le j\le d$ be the cell functions solving
	the family of cell problems (\ref{eq:cellproblem1}) and satisfy \eqref{dk2-6}. Assume that the initial homogenization limit is of the rate
	\[
	\left\Vert c_{\varepsilon}^{\pm,0}-c_{0}^{\pm,0}\right\Vert _{L^{2}\left(\Omega^{\varepsilon}\right)}^{2}\le C\varepsilon^{\mu}\quad\text{for some }\mu\in\mathbb{R}_{+}.
	\]
	Then the following corrector estimates hold:
	\begin{align*}
	& \left\Vert v_{\varepsilon}-\bar{v}_{0}^{\varepsilon}\right\Vert _{L^{2}\left(\left(0,T\right)\times\Omega^{\varepsilon}\right)}\le C\varepsilon^{\frac{1}{2}},\\
	& \left\Vert \tilde{\Phi}_{\varepsilon}-\tilde{\Phi}_{0}^{\varepsilon}\right\Vert _{L^{2}\left(\left(0,T\right)\times\Omega^{\varepsilon}\right)}+\left\Vert c_{\varepsilon}^{\pm}-c_{0}^{\pm,\varepsilon}\right\Vert _{L^{2}\left(\left(0,T\right)\times\Omega^{\varepsilon}\right)}\\
	&+\left\Vert \nabla\left(\tilde{\Phi}_{\varepsilon}-\tilde{\Phi}_{1}^{\varepsilon}\right)\right\Vert _{\left[L^{2}\left(\left(0,T\right)\times\Omega^{\varepsilon}\right)\right]^{d}}\le C\max\left\{ \varepsilon^{\frac{1}{2}},\varepsilon^{\frac{\mu}{2}}\right\} ,\\
	& \left\Vert \nabla\left(c_{\varepsilon}^{\pm}-c_{1}^{\pm,\varepsilon}\right)\right\Vert _{\left[L^{2}\left(\left(0,T\right)\times\Omega^{\varepsilon}\right)\right]^{d}}\le C\max\left\{ \varepsilon^{\frac{1}{4}},\varepsilon^{\frac{\mu}{2}}\right\} ,
	\end{align*}
	where $\bar{v}_{0}^{\varepsilon}$, $\Phi_{0}^{\varepsilon}$, $c_{0}^{\pm,\varepsilon}$,
	$\tilde{\Phi}_{1}^{\varepsilon}$, $c_{1}^{\pm,\varepsilon}$ are
	the macroscopic reconstructions defined in (\ref{eq:macrorecon1})-(\ref{eq:macrorecon5}).
	
	Let $w_{j}^{i}$, $\pi_{j}$ and $r_{ij}$ for $1\le i,j\le d$ be
	the cell functions solving the cell problems (\ref{eq:cellproblem2})
	and (\ref{eq:lastcellproblem}), respectively, and satisfy \eqref{dk3-6}. If we further assume that
	\[
	\tilde{\Phi}_{0}\in H^{4}\left(\Omega^{\varepsilon}\right),c_{0}^{\pm}\in W^{2,\infty}\left(\Omega^{\varepsilon}\right),p_{0}\in H^{4}\left(\Omega^{\varepsilon}\right),
	\]
	then for any $\lambda\in\left(0,1\right)$, the following
	corrector estimates hold:
	\begin{align*}
	& \left\Vert v_{\varepsilon}-\left|Y_{l}\right|^{-1}\mathbb{D}v_{0}^{\varepsilon}-\varepsilon\left|Y_{l}\right|^{-1}\mathbb{D}v_{1}^{\varepsilon}\right\Vert _{\left[L^{2}\left(\left(0,T\right)\times\Omega^{\varepsilon}\right)\right]^{d}}\le C\left(\max\left\{ \varepsilon^{\frac{1}{2}},\varepsilon^{\frac{\mu}{2}}\right\} +\varepsilon^{\frac{\lambda}{2}}+\varepsilon^{1-\frac{3\lambda}{2}}+\varepsilon^{\frac{1}{2}-\lambda}\right),\\
	& \left\Vert p_{\varepsilon}-p_{0}\right\Vert _{L^{2}\left(\Omega\right)/\mathbb{R}}\le C\left(\max\left\{ \varepsilon^{\frac{1}{2}},\varepsilon^{\frac{\mu}{2}}\right\} +\varepsilon^{\frac{\lambda}{2}}+\varepsilon^{1-\frac{3\lambda}{2}}+\varepsilon^{\frac{1}{2}-\lambda}\right),
	\end{align*}
	where $v_{0}^{\varepsilon}$ and $v_{1}^{\varepsilon}$ are defined
	in (\ref{eq:macrorecon6}) and (\ref{eq:macrorecon7}), respectively.
\end{thm}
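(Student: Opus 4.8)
The argument is the energy method applied to the differences between the microscopic fields and truncated macroscopic reconstructions, in the order: scaled Poisson equation, Nernst--Planck system, Stokes system, with Gronwall's inequality closing the concentration part. The reconstructions $\tilde\Phi_0^\varepsilon,\tilde\Phi_1^\varepsilon,c_0^{\pm,\varepsilon},c_1^{\pm,\varepsilon},\bar v_0^\varepsilon,v_0^\varepsilon,v_1^\varepsilon$ of \eqref{eq:macrorecon1}--\eqref{eq:macrorecon7} are assembled from $\tilde\Phi_0$, $c_0^\pm$, $p_0$, the cell functions $\varphi_j,w_j,\pi_j,r_{ij}$, and the interior cut-off $m^\varepsilon$, in such a way that they are admissible test functions on $\Omega^\varepsilon$ and solve the microscopic equations up to a controllable residual. \emph{Step 1 (Poisson).} Since $\tilde\Phi_\varepsilon=\varepsilon^\alpha\Phi_\varepsilon$ solves $-\Delta\tilde\Phi_\varepsilon=c_\varepsilon^+-c_\varepsilon^-$ with homogeneous Neumann data on $\Gamma^\varepsilon\cup\partial\Omega$, I would subtract from its weak form an $\varepsilon$-scale lift of the homogenized Poisson weak form of Theorem~\ref{thm:macroNeumann-1}, obtain an equation for $e_\Phi:=\tilde\Phi_\varepsilon-\tilde\Phi_1^\varepsilon$, and test with $e_\Phi$ (centred using $(\mathrm A_5)$). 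Poincar\'e's inequality for perforated domains then bounds $\|\nabla e_\Phi\|_{L^2((0,T)\times\Omega^\varepsilon)}^2$ by: the source mismatch, controlled by $\|c_\varepsilon^\pm-c_0^{\pm,\varepsilon}\|_{L^2}$ plus an $O(\varepsilon^{1/2})$ oscillating-integral remainder (Lemma~\ref{lem:ines1}); the boundary-strip terms of $m^\varepsilon$, which are $O(\varepsilon^{1/2})$ by \eqref{eq:cutin}; and corrector remainders carrying $\nabla_y\varphi_j(\cdot/\varepsilon)$ and $\partial^2_x\tilde\Phi_0$, again $O(\varepsilon^{1/2})$ by \eqref{dk2-6} and the $H^2$-regularity in \eqref{dk1-6}. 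Thus $\|\nabla(\tilde\Phi_\varepsilon-\tilde\Phi_1^\varepsilon)\|_{L^2}\le C(\varepsilon^{1/2}+\|c_\varepsilon^\pm-c_0^{\pm,\varepsilon}\|_{L^2})$, and Poincar\'e plus Lemma~\ref{lem:ines1} upgrade this to the stated bound on $\|\tilde\Phi_\varepsilon-\tilde\Phi_0^\varepsilon\|_{L^2}$.

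\emph{Step 2 (Nernst--Planck and Gronwall).} Setting $e_c^\pm:=c_\varepsilon^\pm-c_1^{\pm,\varepsilon}$, I would subtract the lifted macroscopic Nernst--Planck weak form from the microscopic one and test with $e_c^\pm$ on $(0,t)$; the parabolic structure yields $\tfrac12\|e_c^\pm(t)\|_{L^2}^2+\int_0^t\|\nabla e_c^\pm\|_{L^2}^2$ on the left. The convective and drift contributions (the drift present when $\gamma=\alpha$, absent when $\gamma>\alpha$) are split into a small factor times a uniformly bounded one: Theorem~\ref{thm:positive+bound} and \eqref{dk1-6} bound $c_\varepsilon^\pm,c_0^\pm,\bar v_0,\nabla\tilde\Phi_0$ in $L^\infty$, while the small factors are $v_\varepsilon-\bar v_0^\varepsilon$, $\nabla(\tilde\Phi_\varepsilon-\tilde\Phi_1^\varepsilon)$ from Step~1, $e_c^\pm$ itself, and $O(\varepsilon^{1/2})$ cut-off/corrector remainders; Young's inequality absorbs the $\|\nabla e_c^\pm\|_{L^2}^2$ pieces into the left side, while the initial-data mismatch contributes $\varepsilon^\mu$. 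Gronwall's lemma gives $\sup_{[0,T]}\|e_c^\pm(t)\|_{L^2}^2+\int_0^T\|\nabla e_c^\pm\|_{L^2}^2\le C\max\{\varepsilon,\varepsilon^\mu\}$, hence $\|c_\varepsilon^\pm-c_1^{\pm,\varepsilon}\|_{L^2}$, $\|c_\varepsilon^\pm-c_0^{\pm,\varepsilon}\|_{L^2}\le C\max\{\varepsilon^{1/2},\varepsilon^{\mu/2}\}$ (Lemma~\ref{lem:ines1} for the passage between the two reconstructions), which fed back into Step~1 closes the $\tilde\Phi$-estimates. The bound on $\|\nabla(c_\varepsilon^\pm-c_1^{\pm,\varepsilon})\|_{L^2}$ is read off from $\int_0^T\|\nabla e_c^\pm\|_{L^2}^2$, except that one boundary-corrector term is absorbed only by a non-optimal Cauchy--Schwarz/Young split, which degrades its rate from $\varepsilon^{1/2}$ to $\varepsilon^{1/4}$ (the same loss appears in \cite{KM17}). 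Finally, the elementary bound $\|v_\varepsilon-\bar v_0^\varepsilon\|_{L^2}\le C\varepsilon^{1/2}$ follows from the $\varepsilon$-uniform Stokes estimate \eqref{eq:thm2.4-2}, Lemma~\ref{lem:ines3}, and Lemma~\ref{lem:ines1} applied to the cell representation of $v_0$.

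\emph{Step 3 (Stokes corrector).} For the refined velocity and pressure estimates I would follow the two-scale asymptotics with boundary layer of S\'anchez-Palencia \cite{Sanchez80} and \cite{PM96}. Starting from $|Y_l|^{-1}\mathbb D v_0^\varepsilon+\varepsilon|Y_l|^{-1}\mathbb D v_1^\varepsilon$, built from $w_j,\pi_j,r_{ij}$ via \eqref{eq:cellproblem2} and \eqref{eq:lastcellproblem} under \eqref{dk3-6} and the extra $H^4$/$W^{2,\infty}$ regularity of $\tilde\Phi_0,c_0^\pm,p_0$, I would correct it by $-\eta^\delta$ with $\eta^\delta$ as in Lemma~\ref{lem:ines2} for $\delta=\varepsilon^\lambda$ and multiply by $m^\varepsilon$, so that the corrected field lies in $[H_0^1(\Omega^\varepsilon)]^d$ and is divergence-free. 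Subtracting the microscopic Stokes system from the one solved by the reconstruction, testing with the velocity error, and using $\varepsilon\|\nabla v_\varepsilon\|_{L^2}\le C$, the cell identities, Lemma~\ref{lem:ines3}, and the force mismatch (which carries the Step~1--2 errors, hence $\max\{\varepsilon^{1/2},\varepsilon^{\mu/2}\}$), one arrives at the velocity bound with the extra terms $\varepsilon^{\lambda/2}$ (the $L^2$-size of $\eta^\delta$ on the strip of width $\delta$), $\varepsilon^{1-3\lambda/2}$ (from $\varepsilon\|\nabla\eta^\delta\|$ weighted against the $\varepsilon^2$-Stokes scaling), and $\varepsilon^{1/2-\lambda}$ (from the interplay of $m^\varepsilon$ with the second-order corrector $r_{ij}$). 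For the pressure, I would recover $p_\varepsilon-p_0$ (extended to $\Omega$) as the pressure associated with the velocity error through the Bogovski\u{\i}/Ne\v{c}as inf--sup operator on $\Omega^\varepsilon$, whose norm is $\varepsilon$-uniform by the geometry, and estimate it from the momentum-equation residual together with the velocity bound, obtaining the same right-hand side in $L^2(\Omega)/\mathbb R$.

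\emph{Main obstacle.} The crux is Step~3: forcing the velocity reconstruction to be simultaneously divergence-free and in $[H_0^1(\Omega^\varepsilon)]^d$ while keeping its momentum residual small requires balancing the boundary-layer width $\delta=\varepsilon^\lambda$ against the cut-off scale $\varepsilon$, which is exactly why the velocity and pressure rates carry the nonstandard exponents $\varepsilon^{\lambda/2},\varepsilon^{1-3\lambda/2},\varepsilon^{1/2-\lambda}$ rather than $\varepsilon^{1/2}$, and why $\partial\Omega\in C^4$ (through Lemma~\ref{lem:ines2}) is needed. A secondary difficulty running through Steps~1--2 is the nonlinear coupling of the Poisson and Nernst--Planck errors via the drift $c_\varepsilon^\pm\nabla\Phi_\varepsilon$ and the electric force $(c_\varepsilon^+-c_\varepsilon^-)\nabla\Phi_\varepsilon$ in the Stokes equation: the estimates for $e_\Phi$ and $e_c^\pm$ must be handled as a coupled system, and one must ensure that the suboptimal $\varepsilon^{1/4}$-rate of the concentration gradient error does not leak into the $\varepsilon^{1/2}$-rate of the lower-order norms.
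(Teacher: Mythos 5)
Your overall architecture (energy method on the difference of the micro/macro weak formulations with the cut-off reconstructions \eqref{eq:choice1}--\eqref{eq:choice2}, a Gronwall closure for the coupled potential/concentration errors, and the S\'anchez-Palencia/Mikeli\'c-type boundary-layer construction with $\delta=\varepsilon^{\lambda}$ plus a pressure extension for the Stokes part) is the same as the paper's. However, Step 1 contains a genuine gap: you claim the first-order corrector residual, i.e.\ the term $\left(\mathbb{I}+\nabla_{y}\bar{\varphi}\left(x/\varepsilon\right)-\left|Y_{l}\right|^{-1}\mathbb{D}\right)\nabla_{x}\tilde{\Phi}_{0}$ appearing in $\nabla\tilde{\Phi}_{1}^{\varepsilon}-\left|Y_{l}\right|^{-1}\mathbb{D}\nabla\tilde{\Phi}_{0}$ (cf.\ (\ref{eq:3.7-6-1})), is $O(\varepsilon^{1/2})$ thanks to \eqref{dk2-6} and the $H^{2}$-regularity of $\tilde{\Phi}_{0}$. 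That is false: this oscillating matrix factor has $L^{2}$-norm of order one and converges to zero only weakly, so no regularity of $\tilde{\Phi}_{0}$ or $\varphi_{j}$ makes it small in norm, and Lemma \ref{lem:ines1} does not apply to it either. The paper's estimate hinges on a structural cancellation: $\mathcal{G}:=\mathbb{I}+\nabla_{y}\bar{\varphi}-\left|Y_{l}\right|^{-1}\mathbb{D}$ is divergence-free in $y$ with vanishing cell average, hence admits a skew-symmetric potential $\mathbf{V}$ with $\mathcal{G}=\nabla_{y}\mathbf{V}$, and the identity (\ref{eq:3.8-6}) rewrites $\mathcal{G}^{\varepsilon}\nabla\tilde{\Phi}_{0}$ as $\varepsilon\nabla\cdot\left(\mathbf{V}^{\varepsilon}\nabla\tilde{\Phi}_{0}\right)-\varepsilon\mathbf{V}^{\varepsilon}\Delta\tilde{\Phi}_{0}$, which is where the factor $\varepsilon$ is gained after testing with $\nabla\varphi_{2}$ (and likewise for the concentration correctors in $\mathcal{K}_{2}$). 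Without this div-curl/vector-potential step, or an equivalent oscillating-test-function lemma, your energy inequality does not close at any positive rate, so the claimed $\max\{\varepsilon^{1/2},\varepsilon^{\mu/2}\}$ bounds are unsupported.

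Two smaller inaccuracies. First, the $\varepsilon^{1/4}$ rate is not caused by a boundary-corrector Young split: in the paper the drift coupling produces the product of the two gradient errors (see (\ref{eq:3.20-6})), which forces the concentration-gradient term to enter the first Gronwall estimate (\ref{eq:3.21-6}) only with weight $\varepsilon$; the stated rate is then recovered in a second pass that re-uses the already established bound for $\nabla(\tilde{\Phi}_{\varepsilon}-\tilde{\Phi}_{1}^{\varepsilon})$, leading to (\ref{eq:estimate3-6}). Your claim that Gronwall directly yields $\int_{0}^{T}\|\nabla e_{c}^{\pm}\|_{L^{2}}^{2}\le C\max\{\varepsilon,\varepsilon^{\mu}\}$ is not what the argument gives. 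Second, in Step 3 the reconstruction $\mathcal{V}^{\varepsilon,\delta}$ is neither exactly divergence-free nor globally multiplied by $m^{\varepsilon}$; only the $\varepsilon r_{ij}$-term carries $(1-m^{\varepsilon})$, and the whole point of (\ref{eq:3.28-6}) together with (\ref{eq:3.36-6-1})--(\ref{eq:3.36-6}) is to quantify the nonzero divergence and feed it, with the $(H^{1})'$-bound (\ref{eq:3.32-6}) on the momentum residual, into the velocity and pressure estimates; your exponent bookkeeping for $\varepsilon^{\lambda/2}$, $\varepsilon^{1-3\lambda/2}$, $\varepsilon^{1/2-\lambda}$ is otherwise consistent with that scheme.
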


\begin{thm}
	\label{thm:main2-6}\emph{\textbf{Corrector estimates for Case 2}}
	
	Assume $\left(\text{A}_{1}\right)-\left(\text{A}_{4}\right)$. Let the quadruples $\left(v_{\varepsilon},p_{\varepsilon},\Phi_{\varepsilon},c_{\varepsilon}^{\pm}\right)$
	and $\left(v_{0},p_{0},\Phi_{0},c_{0}^{\pm}\right)$ be weak solutions
	to $\left(P^{\varepsilon}\right)$ and $\left(P_{D}^{0}\right)$ in
	the sense of Definition \ref{def:mathbb=00007BPeps=00007D} and Theorem
	\ref{thm:macroDirichlet-1}, respectively. Furthermore, we assume
	that the limit solutions satisfy the regularity property \eqref{dk1-6}.
	Let $\varphi_{j}$ for $1\le j\le d$ be the cell functions solving
	the family of cell problems (\ref{eq:cellproblem1}) and satisfy \eqref{dk2-6}. Assume that the initial homogenization limit is of the rate
	\[
	\left\Vert c_{\varepsilon}^{\pm,0}-c_{0}^{\pm,0}\right\Vert _{L^{2}\left(\Omega^{\varepsilon}\right)}^{2}\le C\varepsilon^{\mu}\quad\text{for some }\mu\in\mathbb{R}_{+}.
	\]
	Then the following corrector estimates hold:
	\begin{align*} & \left\Vert \tilde{\Phi}_{\varepsilon}-\tilde{\Phi}_{0}^{\varepsilon}\right\Vert _{L^{2}\left(\left(0,T\right)\times\Omega^{\varepsilon}\right)}+\left\Vert \nabla\left(\tilde{\Phi}_{\varepsilon}-\tilde{\Phi}_{0}^{\varepsilon}\right)\right\Vert _{\left[L^{2}\left(\left(0,T\right)\times\Omega^{\varepsilon}\right)\right]^{d}}+\left\Vert c_{\varepsilon}^{\pm}-c_{0}^{\pm,\varepsilon}\right\Vert _{L^{2}\left(\left(0,T\right)\times\Omega^{\varepsilon}\right)}\\
	& +\left\Vert \nabla\left(c_{\varepsilon}^{\pm}-c_{1}^{\pm,\varepsilon}\right)\right\Vert _{\left[L^{2}\left(\left(0,T\right)\times\Omega^{\varepsilon}\right)\right]^{d}}+\left\Vert \tilde{\Phi}_{\varepsilon}-\overline{\tilde{\Phi}}_{0}^{\varepsilon}\right\Vert _{L^{2}\left(\left(0,T\right)\times\Omega^{\varepsilon}\right)}\le C\max\left\{ \varepsilon^{\frac{1}{2}},\varepsilon^{\frac{\mu}{2}}\right\} ,
	\end{align*}
	where $c_{0}^{\pm,\varepsilon}$, $c_{1}^{\pm,\varepsilon}$, $\Phi_{0}^{\varepsilon}$,
	$\overline{\tilde{\Phi}}_{0}^{\varepsilon}$ are the macroscopic reconstructions
	defined in (\ref{eq:3.51-6-1})-(\ref{eq:3.52-6-1}) and (\ref{eq:3.53-6-1})-(\ref{eq:3.54-6-1}).
	
	Let $w_{j}^{i}$, $\pi_{j}$ and $r_{ij}$ $1\le i,j\le d$ be the cell functions solving
	the cell problems (\ref{eq:cellproblem2}) and (\ref{eq:lastcellproblem}),
	respectively, and satisfy \eqref{dk3-6}. If we further assume that $p_{0}\in H^{4}\left(\Omega^{\varepsilon}\right)$, then for any $\lambda\in\left(0,1\right)$, the following
	corrector estimates hold:
	\begin{align*}
	& \left\Vert v_{\varepsilon}-\left|Y_{l}\right|^{-1}\mathbb{D}v_{0}^{\varepsilon}-\varepsilon\left|Y_{l}\right|^{-1}\mathbb{D}v_{1}^{\varepsilon}\right\Vert _{\left[L^{2}\left(\left(0,T\right)\times\Omega^{\varepsilon}\right)\right]^{d}}\le C\left(\max\left\{ \varepsilon^{\frac{1}{2}},\varepsilon^{\frac{\mu}{2}}\right\} +\varepsilon^{\frac{\lambda}{2}}+\varepsilon^{1-\frac{3\lambda}{2}}+\varepsilon^{\frac{1}{2}-\lambda}\right),\\
	& \left\Vert p_{\varepsilon}-p_{0}\right\Vert _{L^{2}\left(\Omega\right)/\mathbb{R}}\le C\left(\max\left\{ \varepsilon^{\frac{1}{2}},\varepsilon^{\frac{\mu}{2}}\right\} +\varepsilon^{\frac{\lambda}{2}}+\varepsilon^{1-\frac{3\lambda}{2}}+\varepsilon^{\frac{1}{2}-\lambda}\right),
	\end{align*}
	where $v_{0}^{\varepsilon}$ and $v_{1}^{\varepsilon}$ are defined
	in (\ref{eq:3.49-6-1}) and (\ref{eq:3.50-6-1}), respectively.
\end{thm}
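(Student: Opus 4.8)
The strategy mirrors the Neumann case (Theorem~\ref{thm:main1-6}) but exploits the simplifications peculiar to the Dirichlet setting: the macroscopic potential $\tilde{\Phi}_{0}$ is now $y$-dependent and determined \emph{algebraically} by $c_{0}^{+}-c_{0}^{-}$ through the cell problem \eqref{eq:cellproblem3}, the electrostatic potential and its gradient do not feed back into the macroscopic Stokes and Nernst--Planck equations, and the limit pressure $p_{0}$ is still $y$-independent. First I would record the macroscopic reconstructions $\tilde{\Phi}_{0}^{\varepsilon},\overline{\tilde{\Phi}}_{0}^{\varepsilon},c_{0}^{\pm,\varepsilon},c_{1}^{\pm,\varepsilon}$ in \eqref{eq:3.51-6-1}--\eqref{eq:3.54-6-1}, obtained by evaluating the two-scale limits and the first-order correctors $c_{1}^{\pm}(t,x,x/\varepsilon)=\sum_{j}(\varphi_{j}(x/\varepsilon)\partial_{x_j}c_0^\pm \mp c_0^\pm\,\tilde{\Phi}_0)$ (with the appropriate multiplication by the cut-off $m^{\varepsilon}$ near $\partial\Omega$ so that the corrector lies in the right space), and similarly $v_0^\varepsilon,v_1^\varepsilon$ in \eqref{eq:3.49-6-1}--\eqref{eq:3.50-6-1} from the $r_{ij}$-cell problem \eqref{eq:lastcellproblem}.

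\medskip
The core of the argument is an energy estimate for the error functions. Set $e_c^{\pm}:=c_{\varepsilon}^{\pm}-c_{0}^{\pm,\varepsilon}$ and $e_\Phi:=\tilde{\Phi}_{\varepsilon}-\tilde{\Phi}_{0}^{\varepsilon}$. I would subtract the weak formulation of $(P_D^0)$ (suitably localised and multiplied by the cell-function structure) from that of $(P^\varepsilon)$, using $\varphi_3=e_c^{\pm}$ as test function in the Nernst--Planck block and $\varphi_2=e_\Phi$ in the Poisson block. The time derivative term gives $\tfrac12\tfrac{d}{dt}\|e_c^{\pm}\|_{L^2(\Omega^\varepsilon)}^2$, the diffusion term gives the good term $\|\nabla e_c^{\pm}\|^2$ (after absorbing the corrector mismatch $\nabla(c_\varepsilon^\pm-c_1^{\pm,\varepsilon})$), and the remaining terms are split into: (i) genuine residuals coming from the fact that $c_0^{\pm,\varepsilon}+\varepsilon c_1^{\pm,\varepsilon}$ solves the microscopic equation only up to an $O(\varepsilon)$ discrepancy — these are handled by Lemma~\ref{lem:ines1}, the cut-off estimates \eqref{eq:cutin}, the regularity \eqref{dk1-6}--\eqref{dk3-6}, and the Sobolev embedding $W^{1+s,2}(Y_l)\hookrightarrow L^\infty$ forced by $s>d/2$; (ii) the drift and convection couplings $v_\varepsilon c_\varepsilon^\pm$ and $\varepsilon^\gamma c_\varepsilon^\pm\nabla\Phi_\varepsilon$, which are controlled using the uniform $L^\infty$-bound on $c_\varepsilon^\pm$ from Theorem~\ref{thm:positive+bound} (Dirichlet variant, under the volume-additivity constraint noted in the remark), the a~priori bounds \eqref{eq:thm2.4-4}--\eqref{eq:thm2.4-6}, and Young's inequality to absorb into $\|\nabla e_c^{\pm}\|^2$; (iii) the initial mismatch, which by hypothesis is $O(\varepsilon^{\mu})$. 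The algebraic Poisson relation makes the $e_\Phi$-estimate comparatively cheap: testing with $e_\Phi$ and using the Poincaré-type inequality on $\Omega^\varepsilon$ (the potential vanishes on $\Gamma_D^\varepsilon$) together with $c_\varepsilon^\pm-c_0^{\pm,\varepsilon}$ already controlled, one gets $\|e_\Phi\|_{L^2}+\|\nabla e_\Phi\|_{L^2}\le C(\|e_c^{\pm}\|+\text{residuals})\le C\max\{\varepsilon^{1/2},\varepsilon^{\mu/2}\}$, and likewise for $\|\tilde{\Phi}_\varepsilon-\overline{\tilde{\Phi}}_0^\varepsilon\|_{L^2}$. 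Then Grönwall's inequality on the concentration energy closes the first block of estimates.

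\medskip
For the Stokes/pressure block I would follow the scheme of Allaire and of Sánchez-Palencia (as in \cite{PM96}, \cite{Sanchez80}): define the error $e_v:=v_\varepsilon-|Y_l|^{-1}\mathbb{D}v_0^\varepsilon-\varepsilon|Y_l|^{-1}\mathbb{D}v_1^\varepsilon$, test the difference of the Stokes weak formulations with $e_v$ (minus a divergence-correcting field constructed from Lemma~\ref{lem:ines2} with boundary layer width $\delta=\varepsilon^{\lambda}$, so as to restore $\nabla\cdot(\cdot)=0$ and the no-slip condition), and use Lemma~\ref{lem:ines3} (the $\varepsilon$-scaled Poincaré inequality on $\Omega^\varepsilon$) to convert $\varepsilon^2\|\nabla e_v\|^2$ into control of $\|e_v\|^2$. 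The boundary-layer corrector contributes the terms $\varepsilon^{\lambda/2}$, $\varepsilon^{1-3\lambda/2}$, $\varepsilon^{1/2-\lambda}$ in the stated bound, arising respectively from $\|1-m^\varepsilon\|_{L^2}$-type estimates, from $\|\nabla^2\eta^\delta\|$ with $\delta=\varepsilon^\lambda$ via \eqref{bdttrum-6}, and from the interaction of $\nabla m^\varepsilon$ with the first-order velocity corrector $v_1^\varepsilon$ built from the $r_{ij}$; here the Dirichlet case is again slightly simpler because the right-hand side of the macroscopic Stokes system is zero, so only the $p_0$-driven part of $v_1^\varepsilon$ survives, which is why the assumption reduces to $p_0\in H^4(\Omega^\varepsilon)$. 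The pressure estimate $\|p_\varepsilon-p_0\|_{L^2(\Omega)/\mathbb{R}}$ then follows by the standard inf--sup (Bogovski\u{i}) argument: reconstruct a test field $\varphi_1\in[H_0^1(\Omega^\varepsilon)]^d$ with $\nabla\cdot\varphi_1=p_\varepsilon-p_0$ and $\|\varphi_1\|_{H^1}\le C\varepsilon^{-1}\|p_\varepsilon-p_0\|_{L^2}$, plug it into the momentum balance difference, and bound the right-hand side by the already-established velocity error plus the drift term $\varepsilon^\beta(c_\varepsilon^+-c_\varepsilon^-)\nabla\Phi_\varepsilon$ (whose smallness uses $\beta\ge\alpha-1$ and \eqref{eq:thm2.4-4}).

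\medskip
\textbf{The main obstacle} I anticipate is twofold. First, the nonlinear drift term $\varepsilon^\gamma c_\varepsilon^\pm\nabla\Phi_\varepsilon$ in the Nernst--Planck equations: since $\nabla\Phi_\varepsilon=\varepsilon^{-(\alpha-2)}\varepsilon^{-1}\cdot\varepsilon\nabla\tilde{\Phi}_\varepsilon$ scales badly, one must carefully track the powers of $\varepsilon$ (the two-scale limit gives $\varepsilon\nabla\tilde{\Phi}_\varepsilon\rightharpoonup\nabla_y\tilde{\Phi}_0$), rewrite the product in terms of $e_\Phi$ and the reconstruction, and use the $L^\infty$-bound on $c_\varepsilon^\pm$ to keep the nonlinearity under control before Grönwall — getting the exponent exactly $\varepsilon^{1/4}$ for $\|\nabla(c_\varepsilon^\pm-c_1^{\pm,\varepsilon})\|$ (rather than $\varepsilon^{1/2}$) comes precisely from the loss incurred in estimating this term near $\Gamma_D^\varepsilon$. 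Second, the boundary-layer optimisation for the Stokes part: the three competing powers $\varepsilon^{\lambda/2}$, $\varepsilon^{1-3\lambda/2}$, $\varepsilon^{1/2-\lambda}$ cannot all be made equal, so the estimate is stated with a free $\lambda\in(0,1)$, and verifying that Lemma~\ref{lem:ines2}'s construction (which requires $\partial\Omega\in C^4$) actually produces a field with the claimed $L^q$-bounds on second derivatives when $\delta=\varepsilon^\lambda$ is the delicate point. Everything else is a careful but routine bookkeeping of residual terms using the auxiliary Lemmas~\ref{lem:ines1}--\ref{lem:ines3}, the cut-off estimates \eqref{eq:cutin}, and the regularity hypotheses \eqref{dk1-6}--\eqref{dk3-6}.
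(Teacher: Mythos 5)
Your plan follows essentially the same route as the paper for most of the statement: error-function energy estimates plus Gronwall for the Poisson/Nernst--Planck block, and for the Stokes/pressure block the construction with the cell functions $w_j,\pi_j,r_{ij}$, the cut-off $m^{\varepsilon}$, the boundary-layer field $\eta^{\delta}$ of Lemma \ref{lem:ines2} with $\delta=\varepsilon^{\lambda}$, Lemma \ref{lem:ines3}, and a Tartar-type pressure extension (the paper indeed just re-uses the Case 1 machinery, with only the $p_0$-driven part of $v_1$ surviving). There is, however, a genuine gap in the first block. You assert that the rate for $\Vert\nabla(c_{\varepsilon}^{\pm}-c_{1}^{\pm,\varepsilon})\Vert$ comes out as exactly $\varepsilon^{1/4}$ ``from a loss near $\Gamma_{D}^{\varepsilon}$''. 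That is the Case 1 (Neumann) rate; Theorem \ref{thm:main2-6} claims $\max\{\varepsilon^{1/2},\varepsilon^{\mu/2}\}$ for this term, and the paper obtains it precisely because in the Dirichlet case (with $\gamma\ge\alpha-1$) the electro-drift does not appear in the limit Nernst--Planck system, so the analogue of the coupling term $\mathcal{K}_{3}$ between the concentration-gradient error and the potential-gradient error is absent; consequently the Gronwall functional $w_{2}$ carries the unweighted term $\Vert\nabla(c_{\varepsilon}^{\pm}-c_{1}^{\pm,\varepsilon})\Vert^{2}$, and the bootstrap that causes the $1/4$-loss in Case 1 is not needed. Importing the Neumann mechanism wholesale, as your sketch does, would only yield a weaker bound than the one you are asked to prove.

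The potential block is also treated too loosely. In Case 2 the microscopic equation is $-\varepsilon^{2}\Delta\tilde\Phi_{\varepsilon}=c_{\varepsilon}^{+}-c_{\varepsilon}^{-}$, while $\tilde\Phi_{0}$ solves the $y$-wise problem $-\Delta_{y}\tilde\Phi_{0}=c_{0}^{+}-c_{0}^{-}$ with $\tilde\Phi_{0}=0$ on $\Gamma$. The paper forms the pointwise difference equation, decomposes $(\Delta_{y}\tilde\Phi_{0})^{\varepsilon}$ through the identity $\nabla_{y}=\varepsilon(\nabla-\nabla_{x})$ combined with the cut-off $m^{\varepsilon}$ (the terms $\mathcal F_{1},\mathcal F_{2},\mathcal F_{3}$ in (\ref{eq:3.49-6})), and uses the extra two-scale regularity $\tilde\Phi_{0}\in L^{\infty}(\Omega^{\varepsilon};W^{2+s,2}(Y_{l}))\cap H^{1}(\Omega^{\varepsilon};W^{1+s,2}(Y_{l}))$; the $\mathcal F_{3}$-term is exactly what compensates the $\varepsilon^{-1}$-large part of $\nabla\tilde\Phi_{0}^{\varepsilon}$. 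Your shortcut --- test with $e_{\Phi}$ and invoke a Poincar\'e inequality because the potential vanishes on the micro-surface --- only produces the $\varepsilon^{2}$-weighted coercive term $\varepsilon^{2}\Vert\nabla e_{\Phi}\Vert^{2}$, and Lemma \ref{lem:ines3} works in the wrong direction for removing that weight; hence the inequality $\Vert e_{\Phi}\Vert+\Vert\nabla e_{\Phi}\Vert\le C(\Vert e_{c}^{\pm}\Vert+\mathrm{residuals})$ does not follow from what you wrote, and without the decomposition above (or an equivalent device) the claimed gradient estimate for $\tilde\Phi_{\varepsilon}-\tilde\Phi_{0}^{\varepsilon}$ is not reached. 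The final step, passing to $\Vert\tilde\Phi_{\varepsilon}-\overline{\tilde\Phi}_{0}^{\varepsilon}\Vert$ via Lemma \ref{lem:ines1}, you do have in common with the paper.
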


\subsection{Proof of Theorem \ref{thm:main1-6}}

To study the homogenization limit, the existence of asymptotic
expansions
\begin{align*}
	v_{\varepsilon}\left(t,x\right) & =v_{0}\left(t,x,\frac{x}{\varepsilon}\right)+\varepsilon v_{1}\left(t,x,\frac{x}{\varepsilon}\right)+\varepsilon^{2}v_{2}\left(t,x,\frac{x}{\varepsilon}\right)+...\\
	p_{\varepsilon}\left(t,x\right) & =p_{0}\left(t,x,\frac{x}{\varepsilon}\right)+\varepsilon p_{1}\left(t,x,\frac{x}{\varepsilon}\right)+\varepsilon^{2}p_{2}\left(t,x,\frac{x}{\varepsilon}\right)+...\\
	\tilde{\Phi}_{\varepsilon}\left(t,x\right) & =\tilde{\Phi}_{0}\left(t,x,\frac{x}{\varepsilon}\right)+\varepsilon\tilde{\Phi}_{1}\left(t,x,\frac{x}{\varepsilon}\right)+\varepsilon^{2}\tilde{\Phi}_{2}\left(t,x,\frac{x}{\varepsilon}\right)+...\\
	c_{\varepsilon}^{\pm}\left(t,x\right) & =c_{0}^{\pm}\left(t,x,\frac{x}{\varepsilon}\right)+\varepsilon c_{1}^{\pm}\left(t,x,\frac{x}{\varepsilon}\right)+\varepsilon^{2}c_{2}^{\pm}\left(t,x,\frac{x}{\varepsilon}\right)+...,
\end{align*}
is assumed and some terms (e.g. $v_{0},p_{0},\tilde{\Phi}_{0},c_{0}^{\pm}$)
have been determined in the previous section. Since the route to derive
the corrector for Stokes' equation is different from the usual construction
of corrector estimates for the other equations, we shall postpone for a moment the proof of the corrector for the pressure.

We define the macroscopic reconstructions, as
follows:
\begin{align}
	& \bar{v}_{0}^{\varepsilon}\left(t,x\right):=\left|Y_{l}\right|^{-1}\bar{v}_{0}\left(t,x\right),\label{eq:macrorecon1}\\
	& \tilde{\Phi}_{0}^{\varepsilon}\left(t,x\right):=\tilde{\Phi}_{0}\left(t,x\right),\\
	& \tilde{\Phi}_{1}^{\varepsilon}\left(t,x\right):=\tilde{\Phi}_{0}^{\varepsilon}\left(t,x\right)+\varepsilon\sum_{j=1}^{d}\varphi_{j}\left(\frac{x}{\varepsilon}\right)\partial_{x_{j}}\tilde{\Phi}_{0}^{\varepsilon}\left(t,x\right),\\
	& c_{0}^{\pm,\varepsilon}\left(t,x\right):=c_{0}^{\pm}\left(t,x\right),\\
	& c_{1}^{\pm,\varepsilon}\left(t,x\right):=c_{0}^{\pm,\varepsilon}\left(t,x\right)+\varepsilon\sum_{j=1}^{d}\varphi_{j}\left(\frac{x}{\varepsilon}\right)\partial_{x_{j}}c_{0}^{\pm,\varepsilon}\left(t,x\right),\label{eq:macrorecon5}\\
	& v_{0}^{\varepsilon}\left(t,x\right):=v_{0}\left(t,x,\frac{x}{\varepsilon}\right),\label{eq:macrorecon6}\\
	& v_{1}^{\varepsilon}\left(t,x\right):=v_{1}\left(t,x,\frac{x}{\varepsilon}\right).\label{eq:macrorecon7}
\end{align}
Lemma \ref{lem:ines1} ensures the following
estimate:
\begin{equation}
	\left\Vert v_{\varepsilon}-\bar{v}_{0}^{\varepsilon}\right\Vert _{L^{2}\left(\left(0,T\right)\times\Omega^{\varepsilon}\right)}\le C\varepsilon^{\frac{1}{2}},\label{eq:estimate1-6}
\end{equation}
where Definition \ref{def:mathbb=00007BPeps=00007D} and Theorem \ref{thm:existence} guarantee the
regularity for $v_{\varepsilon}$.

Let us now consider the correctors for the electrostatic potential
and the concentrations. We take the difference of the microscopic and macroscopic
Poisson equations in Definition \ref{def:mathbb=00007BPeps=00007D}
and Theorem \ref{thm:macroNeumann}, respectively, with the test function
$\varphi_{2}\in H^{1}\left(\Omega^{\varepsilon}\right)$ and thus
obtain
\begin{align}
	\int_{\Omega^{\varepsilon}}\left(\nabla\tilde{\Phi}_{\varepsilon}-\left|Y_{l}\right|^{-1}\mathbb{D}\nabla\tilde{\Phi}_{0}\right)\cdot\nabla\varphi_{2}dx & +\left|Y_{l}\right|^{-1}\bar{\sigma}\int_{\Omega^{\varepsilon}}\varphi_{2}dx-\varepsilon\int_{\Gamma^{\varepsilon}}\sigma\varphi_{2}dS_{\varepsilon}\nonumber \\
	& =\int_{\Omega^{\varepsilon}}\left(c_{\varepsilon}^{+}-c_{0}^{+}+c_{0}^{-}-c_{\varepsilon}^{-}\right)\varphi_{2}dx,\label{eq:3.2-6-1}
\end{align}
where we recall that $\tilde{\Phi}_{\varepsilon}=\varepsilon^{\alpha}\Phi_{\varepsilon}$
cf. Theorem \ref{thm:homoNeumann}.

Similarly, for $\varphi_{3}\in H^{1}\left(\Omega^{\varepsilon}\right)$
we also find the difference equations for the Nernst-Planck equations,
as follows:
\begin{align}
	& \left\langle \partial_{t}\left(c_{\varepsilon}^{\pm}-c_{0}^{\pm}\right),\varphi_{3}\right\rangle _{\left(H^{1}\right)',H^{1}}+\int_{\Omega^{\varepsilon}}\left(\nabla c_{\varepsilon}^{\pm}-\left|Y_{l}\right|^{-1}\mathbb{D}\nabla c_{0}^{\pm}\right)\cdot\nabla\varphi_{3}dx\nonumber \\
	&\qquad \qquad \qquad \qquad \qquad \quad +\int_{\Omega^{\varepsilon}}\left[\left|Y_{l}\right|^{-1}c_{0}^{\pm}\left(\bar{v}_{0}\mp\mathbb{D}\nabla\tilde{\Phi}_{0}\right)-c_{\varepsilon}^{\pm}\left(v_{\varepsilon}\mp\nabla\tilde{\Phi}_{\varepsilon}\right)\right]\cdot\nabla\varphi_{3}dx
	\nonumber\\ & \qquad \qquad \qquad \qquad \qquad \quad
	=\int_{\Omega^{\varepsilon}}\left(R_{\varepsilon}^{\pm}\left(c_{\varepsilon}^{+},c_{\varepsilon}^{-}\right)-R_{0}^{\pm}\left(c_{0}^{+},c_{0}^{-}\right)\right)\varphi_{3}dx.\label{eq:3.3-6-1}
\end{align}

We start the investigation of these corrector justifications
by the following choice of test functions:
\begin{align}
	\varphi_{2}\left(t,x\right) & :=\tilde{\Phi}_{\varepsilon}\left(t,x\right)-\left(\tilde{\Phi}_{0}^{\varepsilon}\left(t,x\right)+\varepsilon m^{\varepsilon}\left(x\right)\sum_{j=1}^{d}\varphi_{j}\left(\frac{x}{\varepsilon}\right)\partial_{x_{j}}\tilde{\Phi}_{0}\left(t,x\right)\right),\label{eq:choice1}\\
	\varphi_{3}\left(t,x\right) & :=c_{\varepsilon}^{\pm}\left(t,x\right)-\left(c_{0}^{\pm,\varepsilon}\left(t,x\right)+\varepsilon m^{\varepsilon}\left(x\right)\sum_{j=1}^{d}\varphi_{j}\left(\frac{x}{\varepsilon}\right)\partial_{x_{j}}c_{0}^{\pm}\left(t,x\right)\right).\label{eq:choice2}
\end{align}

To get the estimates from (\ref{eq:3.2-6-1}) and (\ref{eq:3.3-6-1}),
we denote the following terms just for ease of presentation:
\begin{align*}
	& \mathcal{J}_{1}:=\int_{\Omega^{\varepsilon}}\left(\nabla\tilde{\Phi}_{\varepsilon}-\left|Y_{l}\right|^{-1}\mathbb{D}\nabla\tilde{\Phi}_{0}\right)\cdot\nabla\varphi_{2}dx,\\
	& \mathcal{J}_{2}:=\left|Y_{l}\right|^{-1}\bar{\sigma}\int_{\Omega^{\varepsilon}}\varphi_{2}dx-\varepsilon\int_{\Gamma^{\varepsilon}}\sigma\varphi_{2}dS_{\varepsilon},\\
	& \mathcal{J}_{3}:=\int_{\Omega^{\varepsilon}}\left(c_{\varepsilon}^{+}-c_{0}^{+}+c_{0}^{-}-c_{\varepsilon}^{-}\right)\varphi_{2}dx,\\
	& \mathcal{K}_{1}:=\left\langle \partial_{t}\left(c_{\varepsilon}^{\pm}-c_{0}^{\pm}\right),\varphi_{3}\right\rangle _{\left(H^{1}\right)',H^{1}}=\int_{\Omega^{\varepsilon}}\partial_{t}\left(c_{\varepsilon}^{\pm}-c_{0}^{\pm}\right)\varphi_{3}dx,\\
	& \mathcal{K}_{2}:=\int_{\Omega^{\varepsilon}}\left(\nabla c_{\varepsilon}^{\pm}-\left|Y_{l}\right|^{-1}\mathbb{D}\nabla c_{0}^{\pm}\right)\cdot\nabla\varphi_{3}dx,\\
	& \mathcal{K}_{3}:=\int_{\Omega^{\varepsilon}}\left[\left|Y_{l}\right|^{-1}c_{0}^{\pm}\left(\bar{v}_{0}\mp\mathbb{D}\nabla\tilde{\Phi}_{0}\right)-c_{\varepsilon}^{\pm}\left(v_{\varepsilon}\mp\nabla\tilde{\Phi}_{\varepsilon}\right)\right]\cdot\nabla\varphi_{3}dx,\\
	& \mathcal{K}_{4}:=\int_{\Omega^{\varepsilon}}\left(R_{\varepsilon}^{\pm}\left(c_{\varepsilon}^{+},c_{\varepsilon}^{-}\right)-R_{0}^{\pm}\left(c_{0}^{+},c_{0}^{-}\right)\right)\varphi_{3}dx.
\end{align*}
Using the representation
\[
\nabla\tilde{\Phi}_{\varepsilon}-\left|Y_{l}\right|^{-1}\mathbb{D}\nabla\tilde{\Phi}_{0}=\nabla\left(\tilde{\Phi}_{\varepsilon}-\tilde{\Phi}_{1}^{\varepsilon}\right)+\nabla\tilde{\Phi}_{1}^{\varepsilon}-\left|Y_{l}\right|^{-1}\mathbb{D}\nabla\tilde{\Phi}_{0},
\]
the term $\mathcal{J}_{1}$ thus becomes
\[
\mathcal{J}_{1}=\int_{\Omega^{\varepsilon}}\nabla\left(\tilde{\Phi}_{\varepsilon}-\tilde{\Phi}_{1}^{\varepsilon}\right)\cdot\nabla\varphi_{2}dx+\int_{\Omega^{\varepsilon}}\left(\nabla\tilde{\Phi}_{1}^{\varepsilon}-\left|Y_{l}\right|^{-1}\mathbb{D}\nabla\tilde{\Phi}_{0}\right)\cdot\nabla\varphi_{2}dx.
\]
With the choice of $\varphi_{2}$ in (\ref{eq:choice1}),
we have
\begin{align}
	\int_{\Omega^{\varepsilon}}\nabla\left(\tilde{\Phi}_{\varepsilon}-\tilde{\Phi}_{1}^{\varepsilon}\right)\cdot\nabla\varphi_{2}dx
	&\ge C\left\Vert \nabla\left(\tilde{\Phi}_{\varepsilon}-\tilde{\Phi}_{1}^{\varepsilon}\right)\right\Vert _{\left[L^{2}\left(\Omega^{\varepsilon}\right)\right]^{d}}^{2}
	\nonumber\\ &
	-C\varepsilon^{2}\left\Vert \nabla\left(\left(1-m^{\varepsilon}\right)\sum_{j=1}^{d}\varphi_{j}^{\varepsilon}\partial_{x_{j}}\tilde{\Phi}_{0}\right)\right\Vert _{\left[L^{2}\left(\Omega^{\varepsilon}\right)\right]^{d}}^{2}.\label{eq:3.6-6-1}
\end{align}

To estimate the second term on the right-hand side of (\ref{eq:3.6-6-1}),
we assume that $\tilde{\Phi}_{0}\in W^{1,\infty}\left(\Omega^{\varepsilon}\right)\cap H^{2}\left(\Omega^{\varepsilon}\right)$
and $\varphi_{j}\in W^{1+s,2}\left(Y_{l}\right)$ for $s>d/2$ and
$1\le j\le d$. Using the Sobolev embedding $W^{1+s,2}\left(Y_{l}\right)\subset C^{1}\left(\bar{Y}_{l}\right)$
together with the inequalities in (\ref{eq:cutin}), we estimate that
\begin{align*}
	\varepsilon\left\Vert \nabla\left(\left(1-m^{\varepsilon}\right)\sum_{j=1}^{d}\varphi_{j}^{\varepsilon}\partial_{x_{j}}\tilde{\Phi}_{0}\right)\right\Vert _{\left[L^{2}\left(\Omega^{\varepsilon}\right)\right]^{d}} & \le\varepsilon\left\Vert \nabla m^{\varepsilon}\right\Vert _{\left[L^{2}\left(\Omega^{\varepsilon}\right)\right]^{d}}\left\Vert \tilde{\Phi}_{0}\right\Vert _{W^{1,\infty}\left(\Omega^{\varepsilon}\right)}\sum_{j=1}^{d}\left\Vert \varphi_{j}\right\Vert _{C\left(\bar{Y}_{l}\right)}\\
	& +\left\Vert 1-m^{\varepsilon}\right\Vert _{L^{2}\left(\Omega^{\varepsilon}\right)}\left\Vert \tilde{\Phi}_{0}\right\Vert _{W^{1,\infty}\left(\Omega^{\varepsilon}\right)}\sum_{j=1}^{d}\left\Vert \nabla_{y}\varphi_{j}\right\Vert _{\left[C\left(\bar{Y}_{l}\right)\right]^{d}}\\
	& +\varepsilon\left\Vert \tilde{\Phi}_{0}\right\Vert _{H^{2}\left(\Omega^{\varepsilon}\right)}\sum_{j=1}^{d}\left\Vert \varphi_{j}\right\Vert _{C\left(\bar{Y}_{l}\right)}\\
	& \le C\left(\varepsilon+\varepsilon^{\frac{1}{2}}\right).
\end{align*}

Taking into account the explicit computation of $\nabla\tilde{\Phi}_{1}^{\varepsilon}$
, which reads
\[
\nabla\tilde{\Phi}_{1}^{\varepsilon}=\nabla_{x}\tilde{\Phi}_{0}+\left(\nabla_{y}\bar{\varphi}\right)^{\varepsilon}\nabla_{x}\tilde{\Phi}_{0}+\varepsilon\bar{\varphi}^{\varepsilon}\nabla_{x}\nabla\tilde{\Phi}_{0}\quad\text{for }\bar{\varphi}=\left(\varphi_{j}\right)_{j=\overline{1,d}},
\]
we can write
\begin{equation}
	\nabla\tilde{\Phi}_{1}^{\varepsilon}-\left|Y_{l}\right|^{-1}\mathbb{D}\nabla\tilde{\Phi}_{0}=\nabla\tilde{\Phi}_{0}+\left(\nabla_{y}\bar{\varphi}\right)^{\varepsilon}\nabla_{x}\tilde{\Phi}_{0}-\left|Y_{l}\right|^{-1}\mathbb{D}\nabla\tilde{\Phi}_{0}+\varepsilon\bar{\varphi}^{\varepsilon}\nabla_{x}\nabla\tilde{\Phi}_{0}.\label{eq:3.7-6-1}
\end{equation}
Due to the smoothness of the involved functions, the fourth
term in (\ref{eq:3.7-6-1}) is bounded in $L^{2}$-norm by
\begin{equation}
	\varepsilon\left\Vert \bar{\varphi}^{\varepsilon}\nabla_{x}\nabla\tilde{\Phi}_{0}\right\Vert _{\left[L^{2}\left(\Omega^{\varepsilon}\right)\right]^{d}}\le C\varepsilon\left\Vert \bar{\varphi}\right\Vert _{\left[C\left(\bar{Y}_{l}\right)\right]^{d}}\left\Vert \tilde{\Phi}_{0}\right\Vert _{H^{2}\left(\Omega^{\varepsilon}\right)}.\label{eq:3.8-6-1}
\end{equation}
On the other hand, from the structure of the
cell problem \ref{eq:cellproblem1} we see that $\mathcal{G}:=\mathbb{I}+\nabla_{y}\bar{\varphi}-\left|Y_{l}\right|^{-1}\mathbb{D}$
is divergence-free with respect to $y$. In parallel with that, its average
also vanishes in the sense that
\[
\int_{Y_{l}}\mathcal{G}dy=0.
\]
Consequently, the function $\mathcal{G}$ possesses a vector potential
$\mathbf{V}$ which is skew-symmetric and satisfies $\mathcal{G}=\nabla_{y}\mathbf{V}$.
Note that the choice of this potential is not unique in general, but
$\mathbf{V}$ can be chosen in such a way that it solves a Poisson
equation $\Delta_{y}\mathbf{V}=f\left(y\right)\nabla_{y}\mathcal{G}$
for some constant $f$ only dependent of the cell's dimension. Therefore,
to determine $\mathbf{V}$ uniquely, we associate this Poisson equation
with the periodic boundary condition at $\Gamma$ and the vanishing
cell average. Using the simple relation $\nabla_{y}=\varepsilon\nabla-\varepsilon\nabla_{x}$,
we arrive at
\begin{equation}
	\mathcal{G}^{\varepsilon}\nabla\tilde{\Phi}_{0}=\varepsilon\nabla\cdot\left(\mathbf{V}^{\varepsilon}\nabla\tilde{\Phi}_{0}\right)-\varepsilon\mathbf{V}^{\varepsilon}\Delta\tilde{\Phi}_{0}.\label{eq:3.8-6}
\end{equation}
Due to the skew-symmetry of $\mathbf{V}$, the first term on the right-hand
side of (\ref{eq:3.8-6}) is divergence-free and its boundedness in
$L^{2}\left(\Omega^{\varepsilon}\right)$ is thus of the order of
$\varepsilon$. Since $\bar{\varphi}\in\left[W^{1+s,2}\left(Y_{l}\right)\right]^{d}$
for $s>d/2$, it yields from the Poisson equation for $\mathbf{V}$
that
\[
\left\Vert \mathbf{V}\right\Vert _{W^{1+s,2}\left(Y_{l}\right)}\le C\left\Vert \mathcal{G}\right\Vert _{W^{s,2}\left(Y_{l}\right)}.
\]

Applying again the compact embedding $W^{s,2}\left(Y_{l}\right)\subset C\left(\bar{Y}_{l}\right)$
for $s>d/2$, we obtain $\mathbf{V}\in C\left(\bar{Y}_{l}\right)$
and it enables us to get the boundedness of the second term on the
right-hand side of (\ref{eq:3.8-6}). In fact, it gives
\[
\varepsilon\left\Vert \mathbf{V}^{\varepsilon}\Delta\tilde{\Phi}_{0}\right\Vert _{L^{2}\left(\Omega^{\varepsilon}\right)}\le\varepsilon\left\Vert \mathbf{V}\right\Vert _{C\left(\bar{Y}_{l}\right)}\left\Vert \tilde{\Phi}_{0}\right\Vert _{H^{2}\left(\Omega^{\varepsilon}\right)}.
\]

Combining this inequality with (\ref{eq:3.7-6-1}), (\ref{eq:3.8-6-1}) and using
the H\"older's inequality, we conclude that
\[
\int_{\Omega^{\varepsilon}}\left(\nabla\tilde{\Phi}_{1}^{\varepsilon}-\left|Y_{l}\right|^{-1}\mathbb{D}\nabla\tilde{\Phi}_{0}\right)\cdot\nabla\varphi_{2}dx\le C\varepsilon.
\]
This step completes the estimates for $\mathcal{J}_{1}$. More precisely, we obtain
\begin{equation}
	\mathcal{J}_{1}\ge C\left\Vert \nabla\left(\tilde{\Phi}_{\varepsilon}-\tilde{\Phi}_{1}^{\varepsilon}\right)\right\Vert _{\left[L^{2}\left(\Omega^{\varepsilon}\right)\right]^{d}}^{2}-C\left(\varepsilon^{2}+\varepsilon\right).\label{eq:gi1-6}
\end{equation}

In the same vein, we can estimate the term $\mathcal{K}_{2}$ with
the aid of the \emph{a priori} regularity $c_{0}^{\pm}\in W^{1,\infty}\left(\Omega^{\varepsilon}\right)\cap H^{2}\left(\Omega^{\varepsilon}\right)$
and $\varphi_{j}\in W^{1+s,2}\left(Y_{l}\right)$ for $s>d/2$ and
$1\le j\le d$. We thus get
\begin{equation}
	\mathcal{K}_{2}\ge C\left\Vert \nabla\left(c_{\varepsilon}^{\pm}-c_{1}^{\pm,\varepsilon}\right)\right\Vert _{\left[L^{2}\left(\Omega^{\varepsilon}\right)\right]^{d}}^{2}-C\left(\varepsilon^{2}+\varepsilon\right).\label{eq:ka2-6}
\end{equation}

We now turn our attention to the estimates for $\mathcal{J}_{2}$
and $\mathcal{J}_{3}$. Noticing $\bar{\sigma}:=\int_{\Gamma}\sigma dS_{y}$
which implies that
\[
\left|Y_{l}\right|^{-1}\int_{Y_{l}}\bar{\sigma}dy=\int_{\Gamma}\sigma dS_{y},
\]
we then apply \cite[Lemma 5.2]{Tycho} to gain
\[
\left|\mathcal{J}_{2}\right|\le C\varepsilon\left\Vert \varphi_{2}\right\Vert _{H^{1}\left(\Omega^{\varepsilon}\right)}.
\]

Note that due to the choice of $\varphi_{2}$ in (\ref{eq:choice1}),
we have
\begin{align}
	\left\Vert \varphi_{2}\right\Vert _{H^{1}\left(\Omega^{\varepsilon}\right)} & \le\left\Vert \tilde{\Phi}_{\varepsilon}-\tilde{\Phi}_{0}\right\Vert _{L^{2}\left(\Omega^{\varepsilon}\right)}+\left\Vert \nabla\left(\tilde{\Phi}_{\varepsilon}-\tilde{\Phi}_{1}^{\varepsilon}\right)\right\Vert _{\left[L^{2}\left(\Omega^{\varepsilon}\right)\right]^{d}}
	\nonumber \\ &
	+\left\Vert \nabla\left(\tilde{\Phi}_{1}^{\varepsilon}-\tilde{\Phi}_{0}\right)\right\Vert _{\left[L^{2}\left(\Omega^{\varepsilon}\right)\right]^{d}}+\varepsilon\left\Vert m^{\varepsilon}\bar{\varphi}\cdot\nabla_{x}\tilde{\Phi}_{0}\right\Vert _{H^{1}\left(\Omega^{\varepsilon}\right)}\nonumber \\
	& \le\left\Vert \tilde{\Phi}_{\varepsilon}-\tilde{\Phi}_{0}\right\Vert _{L^{2}\left(\Omega^{\varepsilon}\right)}+\left\Vert \nabla\left(\tilde{\Phi}_{\varepsilon}-\tilde{\Phi}_{1}^{\varepsilon}\right)\right\Vert _{\left[L^{2}\left(\Omega^{\varepsilon}\right)\right]^{d}}+C\left(1+\varepsilon+\varepsilon^{\frac{1}{2}}\right),\label{eq:3.10-6-1}
\end{align}
where we use the inequalities (\ref{eq:cutin}) with the regularity
assumptions on $\bar{\varphi}$ and $\tilde{\Phi}_{0}$, and the following
bound:
\[
\left\Vert \nabla\left(\tilde{\Phi}_{1}^{\varepsilon}-\tilde{\Phi}_{0}\right)\right\Vert _{\left[L^{2}\left(\Omega^{\varepsilon}\right)\right]^{d}}\le\left\Vert \nabla_{y}\bar{\varphi}\right\Vert _{C\left(\bar{Y}_{l}\right)}\left\Vert \tilde{\Phi}_{0}\right\Vert _{W^{1,\infty}\left(\Omega^{\varepsilon}\right)}+\varepsilon\left\Vert \bar{\varphi}\right\Vert _{C\left(\bar{Y}_{l}\right)}\left\Vert \tilde{\Phi}_{0}\right\Vert _{H^{2}\left(\Omega^{\varepsilon}\right)}.
\]

Therefore, we can write that
\begin{equation}
	\left|\mathcal{J}_{2}\right|\le C\varepsilon\left(\left\Vert \tilde{\Phi}_{\varepsilon}-\tilde{\Phi}_{0}\right\Vert _{L^{2}\left(\Omega^{\varepsilon}\right)}+\left\Vert \nabla\left(\tilde{\Phi}_{\varepsilon}-\tilde{\Phi}_{1}^{\varepsilon}\right)\right\Vert _{\left[L^{2}\left(\Omega^{\varepsilon}\right)\right]^{d}}+1\right).\label{eq:gi2-6}
\end{equation}

The estimate for $\mathcal{J}_{3}$ can be derived by the H\"older
inequality, which reads
\[
\left|\mathcal{J}_{3}\right|\le C\left(\left\Vert c_{\varepsilon}^{+}-c_{0}^{+}\right\Vert _{L^{2}\left(\Omega^{\varepsilon}\right)}+\left\Vert c_{\varepsilon}^{-}-c_{0}^{-}\right\Vert _{L^{2}\left(\Omega^{\varepsilon}\right)}\right)\left\Vert \varphi_{2}\right\Vert _{L^{2}\left(\Omega^{\varepsilon}\right)},
\]
and then leads to
\begin{equation}
	\left|\mathcal{J}_{3}\right|\le C\left(\left\Vert c_{\varepsilon}^{+}-c_{0}^{+}\right\Vert _{L^{2}\left(\Omega^{\varepsilon}\right)}+\left\Vert c_{\varepsilon}^{-}-c_{0}^{-}\right\Vert _{L^{2}\left(\Omega^{\varepsilon}\right)}\right)\left(\left\Vert \tilde{\Phi}_{\varepsilon}-\tilde{\Phi}_{0}\right\Vert _{L^{2}\left(\Omega^{\varepsilon}\right)}+1\right).\label{eq:gi3-6}
\end{equation}

Let us now consider the term $\mathcal{K}_{1}$ and $\mathcal{K}_{4}$.
Note that $\mathcal{K}_{1}$ can be rewritten as
\begin{align}\int_{\Omega^{\varepsilon}}\partial_{t}\left(c_{\varepsilon}^{\pm}-c_{0}^{\pm}\right)\left[c_{\varepsilon}^{\pm}\right. & \left.-\left(c_{0}^{\pm,\varepsilon}\left(t,x\right)+\varepsilon m^{\varepsilon}\bar{\varphi}^{\varepsilon}\cdot\nabla_{x}c_{0}^{\pm}\right)\right]dx \nonumber \\
 & = \frac{1}{2}\frac{d}{dt}\left\Vert c_{\varepsilon}^{\pm}-c_{0}\right\Vert _{L^{2}\left(\Omega^{\varepsilon}\right)}^{2}-\varepsilon\int_{\Omega^{\varepsilon}}\partial_{t}\left(c_{\varepsilon}^{\pm}-c_{0}^{\pm}\right)m^{\varepsilon}\bar{\varphi}\cdot\nabla_{x}c_{0}^{\pm}dx,
\label{eq:3.10-6}
\end{align}
while from the structure of the reaction in $\left(\mbox{A}_{3}\right)$,
we have the similar result for $\mathcal{K}_{4}$ (to $\mathcal{J}_{3}$),
i.e.
\begin{equation}
	\left|\mathcal{K}_{4}\right|\le C\left(\left\Vert c_{\varepsilon}^{+}-c_{0}^{+}\right\Vert _{L^{2}\left(\Omega^{\varepsilon}\right)}+\left\Vert c_{\varepsilon}^{-}-c_{0}^{-}\right\Vert _{L^{2}\left(\Omega^{\varepsilon}\right)}\right)\left(\left\Vert c_{\varepsilon}^{\pm}-c_{0}^{\pm}\right\Vert _{L^{2}\left(\Omega^{\varepsilon}\right)}+1\right).\label{eq:ka4-6}
\end{equation}

The estimate for $\mathcal{K}_{3}$ relies on the following decomposition:
\begin{align*}
	\left|Y_{l}\right|^{-1}c_{0}^{\pm}\left(\bar{v}_{0}\mp\mathbb{D}\nabla\tilde{\Phi}_{0}\right)-c_{\varepsilon}^{\pm}\left(v_{\varepsilon}\mp\nabla\tilde{\Phi}_{\varepsilon}\right) & =\left(c_{0}^{\pm}-c_{\varepsilon}^{\pm}\right)\left(\left|Y_{l}\right|^{-1}\bar{v}_{0}\mp\left|Y_{l}\right|^{-1}\mathbb{D}\nabla\tilde{\Phi}_{0}\right)\\
	& +c_{\varepsilon}^{\pm}\left(\left|Y_{l}\right|^{-1}\bar{v}_{0}-v_{\varepsilon}\right)\mp c_{\varepsilon}^{\pm}\left(\left|Y_{l}\right|^{-1}\mathbb{D}\nabla\tilde{\Phi}_{0}-\nabla\tilde{\Phi}_{\varepsilon}\right).
\end{align*}

Clearly, if $\bar{v}_{0}\in L^{\infty}\left(\Omega^{\varepsilon}\right)$
and since $\tilde{\Phi}_{0}\in W^{1,\infty}\left(\Omega^{\varepsilon}\right)\cap H^{2}\left(\Omega^{\varepsilon}\right)$,
we can estimate, by H\"older's inequality, that
\begin{equation}
	\int_{\Omega^{\varepsilon}}\left(c_{0}^{\pm}-c_{\varepsilon}^{\pm}\right)\left(\left|Y_{l}\right|^{-1}\bar{v}_{0}\mp\left|Y_{l}\right|^{-1}\mathbb{D}\nabla\tilde{\Phi}_{0}\right)\cdot\nabla\varphi_{3}dx\le C\left\Vert c_{\varepsilon}^{\pm}-c_{0}^{\pm}\right\Vert _{L^{2}\left(\Omega^{\varepsilon}\right)}\left\Vert \nabla\varphi_{3}\right\Vert _{\left[L^{2}\left(\Omega^{\varepsilon}\right)\right]^{d}}.\label{eq:3.12-6}
\end{equation}

By using the same arguments in estimating the norm $\left\Vert \varphi_{2}\right\Vert _{H^{1}\left(\Omega^{\varepsilon}\right)}$
in (\ref{eq:3.10-6-1}), we get from (\ref{eq:3.12-6})
that
\begin{align}
\int_{\Omega^{\varepsilon}}\left(c_{0}^{\pm}-c_{\varepsilon}^{\pm}\right)\left(\left|Y_{l}\right|^{-1}\bar{v}_{0}\right. & \left.\mp\left|Y_{l}\right|^{-1}\mathbb{D}\nabla\tilde{\Phi}_{0}\right)\cdot\nabla\varphi_{3}dx \nonumber\\
& \le C\left\Vert c_{\varepsilon}^{\pm}-c_{0}^{\pm}\right\Vert _{L^{2}\left(\Omega^{\varepsilon}\right)}\left(\left\Vert \nabla\left(c_{\varepsilon}^{\pm}-c_{1}^{\pm,\varepsilon}\right)\right\Vert _{\left[L^{2}\left(\Omega^{\varepsilon}\right)\right]^{d}}+1\right).\label{eq:3.18-6}
\end{align}

Next, we observe that
\begin{align}
	\int_{\Omega^{\varepsilon}}c_{\varepsilon}^{\pm}\left(\left|Y_{l}\right|^{-1}\bar{v}_{0}-v_{\varepsilon}\right)\cdot\nabla\varphi_{3}dx & \le C\left\Vert v_{\varepsilon}-\bar{v}_{0}^{\varepsilon}\right\Vert _{L^{2}\left(\Omega^{\varepsilon}\right)}\left(\left\Vert \nabla\left(c_{\varepsilon}^{\pm}-c_{1}^{\pm,\varepsilon}\right)\right\Vert _{\left[L^{2}\left(\Omega^{\varepsilon}\right)\right]^{d}}+1\right)\nonumber \\
	& \le C\varepsilon^{\frac{1}{2}}\left(\left\Vert \nabla\left(c_{\varepsilon}^{\pm}-c_{1}^{\pm,\varepsilon}\right)\right\Vert _{\left[L^{2}\left(\Omega^{\varepsilon}\right)\right]^{d}}+1\right),\label{eq:3.19-6}
\end{align}
which is a direct result of (\ref{eq:estimate1-6}) and of the fact
that all the microscopic solutions are bounded from above uniformly
in the choice of $\varepsilon$ (see Theorem \ref{thm:positive+bound}).

Using again Theorem \ref{thm:positive+bound},
we estimate that
\begin{align}\int_{\Omega^{\varepsilon}}c_{\varepsilon}^{\pm} & \left(\left|Y_{l}\right|^{-1}\mathbb{D}\nabla\tilde{\Phi}_{0}-\nabla\tilde{\Phi}_{\varepsilon}\right)\cdot\nabla\varphi_{3}dx \nonumber\\
& \le C\left(\left\Vert \nabla\left(\tilde{\Phi}_{\varepsilon}-\tilde{\Phi}_{1}^{\varepsilon}\right)\right\Vert _{\left[L^{2}\left(\Omega^{\varepsilon}\right)\right]^{d}}+\left\Vert \nabla\left(\tilde{\Phi}_{1}^{\varepsilon}-\left|Y_{l}\right|^{-1}\mathbb{D}\tilde{\Phi}_{0}\right)\right\Vert _{\left[L^{2}\left(\Omega^{\varepsilon}\right)\right]^{d}}\right)\nonumber\\
& \times\left(\left\Vert \nabla\left(c_{\varepsilon}^{\pm}-c_{1}^{\pm,\varepsilon}\right)\right\Vert _{\left[L^{2}\left(\Omega^{\varepsilon}\right)\right]^{d}}+1\right)\nonumber\\
& \le C\left(\left\Vert \nabla\left(\tilde{\Phi}_{\varepsilon}-\tilde{\Phi}_{1}^{\varepsilon}\right)\right\Vert _{\left[L^{2}\left(\Omega^{\varepsilon}\right)\right]^{d}}+\varepsilon\right)\left(\left\Vert \nabla\left(c_{\varepsilon}^{\pm}-c_{1}^{\pm,\varepsilon}\right)\right\Vert _{\left[L^{2}\left(\Omega^{\varepsilon}\right)\right]^{d}}+1\right),\label{eq:3.20-6}
\end{align}
which also completes the estimates for $\mathcal{K}_{3}$.

Combining (\ref{eq:gi1-6}), (\ref{eq:ka2-6}), (\ref{eq:gi2-6}),
(\ref{eq:gi3-6}), (\ref{eq:ka4-6}), (\ref{eq:3.18-6}), (\ref{eq:3.19-6})
and (\ref{eq:3.20-6}), we obtain, after some rearrangements, that
\begin{align}
	\left\Vert \nabla\left(\tilde{\Phi}_{\varepsilon}-\tilde{\Phi}_{1}^{\varepsilon}\right)\right\Vert _{\left[L^{2}\left(\Omega^{\varepsilon}\right)\right]^{d}}^{2}&
	+\varepsilon\left\Vert \nabla\left(c_{\varepsilon}^{\pm}-c_{1}^{\pm,\varepsilon}\right)\right\Vert _{\left[L^{2}\left(\Omega^{\varepsilon}\right)\right]^{d}}^{2} \nonumber\\ & \le C\left(\varepsilon^{2}+\varepsilon\right)+C\varepsilon^{\frac{3}{2}}\left(\left\Vert \nabla\left(c_{\varepsilon}^{\pm}-c_{1}^{\pm,\varepsilon}\right)\right\Vert _{\left[L^{2}\left(\Omega^{\varepsilon}\right)\right]^{d}}+1\right)\nonumber \\
	& +C\varepsilon\left(\left\Vert \tilde{\Phi}_{\varepsilon}-\tilde{\Phi}_{0}\right\Vert _{L^{2}\left(\Omega^{\varepsilon}\right)}+\left\Vert \nabla\left(\tilde{\Phi}_{\varepsilon}-\tilde{\Phi}_{1}^{\varepsilon}\right)\right\Vert _{\left[L^{2}\left(\Omega^{\varepsilon}\right)\right]^{d}}\right)\nonumber \\
	& +C\left\Vert c_{\varepsilon}^{\pm}-c_{0}^{\pm}\right\Vert _{L^{2}\left(\Omega^{\varepsilon}\right)}\left(\left\Vert \tilde{\Phi}_{\varepsilon}-\tilde{\Phi}_{0}\right\Vert _{L^{2}\left(\Omega^{\varepsilon}\right)}+1\right)\nonumber \\
	& +C\varepsilon\left(\left\Vert \nabla\left(\tilde{\Phi}_{\varepsilon}-\tilde{\Phi}_{1}^{\varepsilon}\right)\right\Vert _{\left[L^{2}\left(\Omega^{\varepsilon}\right)\right]^{d}}+\varepsilon\right)\left(\left\Vert \nabla\left(c_{\varepsilon}^{\pm}-c_{1}^{\pm,\varepsilon}\right)\right\Vert _{\left[L^{2}\left(\Omega^{\varepsilon}\right)\right]^{d}}+1\right)\nonumber \\
	& +C\varepsilon\left\Vert c_{\varepsilon}^{\pm}-c_{0}^{\pm}\right\Vert _{L^{2}\left(\Omega^{\varepsilon}\right)}\left(\left\Vert \nabla\left(c_{\varepsilon}^{\pm}-c_{1}^{\pm,\varepsilon}\right)\right\Vert _{\left[L^{2}\left(\Omega^{\varepsilon}\right)\right]^{d}}+1\right).\label{eq:3.21-6}
\end{align}

It now remains to estimate the second term on the right-hand side
of (\ref{eq:3.10-6}). In fact, integrating the right-hand side of (\ref{eq:3.10-6}) by parts gives
\begin{align*}
	\int_{0}^{t}\int_{\Omega^{\varepsilon}}m^{\varepsilon}\partial_{t}\left(c_{\varepsilon}^{\pm}-c_{0}^{\pm}\right)\bar{\varphi}\cdot\nabla_{x}c_{0}^{\pm}dxds & =\int_{\Omega^{\varepsilon}}m^{\varepsilon}\left.\left(c_{\varepsilon}^{\pm}-c_{0}^{\pm}\right)\bar{\varphi}\cdot\nabla_{x}c_{0}^{\pm}dx\right|_{s=0}^{s=t}\\
	& -\int_{0}^{t}\int_{\Omega^{\varepsilon}}m^{\varepsilon}\left(c_{\varepsilon}^{\pm}-c_{0}^{\pm}\right)\bar{\varphi}\cdot\nabla_{x}\partial_{t}c_{0}^{\pm}dxds,
\end{align*}
and we also have
\begin{align}
\varepsilon\left|\int_{\Omega^{\varepsilon}}m^{\varepsilon}\left[\left(c_{\varepsilon}^{\pm}-c_{0}^{\pm}\right)\right.\right. & -\left.\left.\left(c_{\varepsilon}^{\pm}\left(0\right)-c_{0}^{\pm}\left(0\right)\right)\right]\bar{\varphi}\cdot\nabla_{x}c_{0}^{\pm}dx\right|\nonumber \\
& \le C\varepsilon\left(\left\Vert c_{\varepsilon}^{\pm}-c_{0}^{\pm}\right\Vert _{L^{2}\left(\Omega^{\varepsilon}\right)}+\left\Vert c_{\varepsilon}^{\pm,0}-c_{0}^{\pm,0}\right\Vert _{L^{2}\left(\Omega^{\varepsilon}\right)}\right).
\label{eq:3.22-6}
\end{align}
At this moment, if we set
\begin{align*}
	w_{1}\left(t\right) & =\left\Vert \tilde{\Phi}_{\varepsilon}\left(t\right)-\tilde{\Phi}_{0}\left(t\right)\right\Vert _{L^{2}\left(\Omega^{\varepsilon}\right)}^{2}+\left\Vert c_{\varepsilon}^{\pm}\left(t\right)-c_{0}^{\pm}\left(t\right)\right\Vert _{L^{2}\left(\Omega^{\varepsilon}\right)}^{2},\\
	w_{2}\left(t\right) & =\left\Vert \nabla\left(\tilde{\Phi}_{\varepsilon}-\tilde{\Phi}_{1}^{\varepsilon}\right)\left(t\right)\right\Vert _{\left[L^{2}\left(\Omega^{\varepsilon}\right)\right]^{d}}^{2}+\varepsilon\left\Vert \nabla\left(c_{\varepsilon}^{\pm}-c_{1}^{\pm,\varepsilon}\right)\left(t\right)\right\Vert _{\left[L^{2}\left(\Omega^{\varepsilon}\right)\right]^{d}}^{2},\\
	w_{0} & =\left\Vert c_{\varepsilon}^{\pm,0}-c_{0}^{\pm,0}\right\Vert _{L^{2}\left(\Omega^{\varepsilon}\right)}^{2},
\end{align*}
then, after integrating (\ref{eq:3.21-6}) and (\ref{eq:3.10-6}) from
0 to $t$, we are led to the following Gronwall-like estimate:
\[
w_{1}\left(t\right)+\int_{0}^{t}w_{2}\left(s\right)ds\le C\left(\varepsilon+\left(1+\varepsilon\right)w_{0}+\int_{0}^{t}w_{1}\left(s\right)ds\right),
\]
which provides that
\[
w_{1}\left(t\right)+\int_{0}^{t}w_{2}\left(s\right)ds\le C\left(\varepsilon+\left(1+\varepsilon\right)w_{0}\right)\quad\text{for }t\in\left[0,T\right].
\]

Assuming
\begin{align}\label{initialguess-6}
\left\Vert c_{\varepsilon}^{\pm,0}-c_{0}^{\pm,0}\right\Vert _{L^{2}\left(\Omega^{\varepsilon}\right)}^{2}\le C\varepsilon^{\mu}\quad\text{for some }\mu\in\mathbb{R}_{+},
\end{align}
we thus obtain
\begin{align}
	& \left\Vert \tilde{\Phi}_{\varepsilon}-\tilde{\Phi}_{0}\right\Vert _{L^{2}\left(\left(0,T\right)\times\Omega^{\varepsilon}\right)}^{2}+\left\Vert c_{\varepsilon}^{\pm}-c_{0}^{\pm}\right\Vert _{L^{2}\left(\left(0,T\right)\times\Omega^{\varepsilon}\right)}^{2}+\left\Vert \nabla\left(\tilde{\Phi}_{\varepsilon}-\tilde{\Phi}_{1}^{\varepsilon}\right)\right\Vert _{\left[L^{2}\left(\left(0,T\right)\times\Omega^{\varepsilon}\right)\right]^{d}}^{2}\nonumber \\
	& +\varepsilon\left\Vert \nabla\left(c_{\varepsilon}^{\pm}-c_{1}^{\pm,\varepsilon}\right)\right\Vert _{\left[L^{2}\left(\left(0,T\right)\times\Omega^{\varepsilon}\right)\right]^{d}}^{2}\le C\max\left\{ \varepsilon,\varepsilon^{\mu}\right\} .\label{eq:estimate2-6}
\end{align}
Since the obtained estimate for $\left\Vert \nabla\left(\tilde{\Phi}_{\varepsilon}-\tilde{\Phi}_{1}^{\varepsilon}\right)\right\Vert _{\left[L^{2}\left(\left(0,T\right)\times\Omega^{\varepsilon}\right)\right]^{d}}$
is of the order of $\mathcal{O}\left(\max\left\{ \varepsilon,\varepsilon^{\mu}\right\} \right)$,
we can also increase the rate of $\left\Vert \nabla\left(c_{\varepsilon}^{\pm}-c_{1}^{\pm,\varepsilon}\right)\right\Vert _{\left[L^{2}\left(\left(0,T\right)\times\Omega^{\varepsilon}\right)\right]^{d}}$.
Indeed, let us consider the estimate (\ref{eq:3.18-6}) and (\ref{eq:3.20-6})
for $\left\Vert c_{\varepsilon}^{\pm}-c_{0}^{\pm}\right\Vert _{L^{2}\left(\left(0,T\right)\times\Omega^{\varepsilon}\right)}$
and $\left\Vert \nabla\left(\tilde{\Phi}_{\varepsilon}-\tilde{\Phi}_{1}^{\varepsilon}\right)\right\Vert _{\left[L^{2}\left(\left(0,T\right)\times\Omega^{\varepsilon}\right)\right]^{d}}$,
respectively. Then, we combine again (\ref{eq:ka2-6}), (\ref{eq:ka4-6}),
(\ref{eq:3.18-6}), (\ref{eq:3.19-6}), (\ref{eq:3.20-6}) and (\ref{eq:3.22-6})
to get another Gronwall-like estimate:
\[
\left\Vert \nabla\left(c_{\varepsilon}^{\pm}-c_{1}^{\pm,\varepsilon}\right)\left(t\right)\right\Vert _{\left[L^{2}\left(\Omega^{\varepsilon}\right)\right]^{d}}^{2}\le C\left(\varepsilon^{\frac{1}{2}}+\max\left\{ \varepsilon,\varepsilon^{\mu}\right\} +\varepsilon\int_{0}^{t}\left\Vert \nabla\left(c_{\varepsilon}^{\pm}-c_{1}^{\pm,\varepsilon}\right)\left(s\right)\right\Vert _{\left[L^{2}\left(\Omega^{\varepsilon}\right)\right]^{d}}^{2}ds\right).
\]

As a result, we have
\begin{equation}
	\left\Vert \nabla\left(c_{\varepsilon}^{\pm}-c_{1}^{\pm,\varepsilon}\right)\right\Vert _{\left[L^{2}\left(\left(0,T\right)\times\Omega^{\varepsilon}\right)\right]^{d}}^{2}\le C\max\left\{ \varepsilon^{\frac{1}{2}},\varepsilon^{\mu}\right\} .\label{eq:estimate3-6}
\end{equation}

Note that for $\gamma>\alpha$, the drift term in the macroscopic
Nernst-Planck system is not present. Thus, this term does not appear in (\ref{eq:3.18-6})
and (\ref{eq:3.20-6}). Due to the \emph{a priori} estimate that $\left\Vert \tilde{\Phi}_{\varepsilon}\right\Vert _{L^{2}\left(0,T;H^{1}\left(\Omega^{\varepsilon}\right)\right)}\le C$
(cf. Theorem \ref{thm:aprioriestimate-N}) in
combination with the boundedness of $c_{\varepsilon}^{\pm}$ (cf. Theorem
\ref{thm:positive+bound}), it is straightforward to get the same corrector
estimate as (\ref{eq:estimate2-6}). Moreover, if $\alpha<0$, the
corrector becomes of the order $\mathcal{O}\left(\max\left\{ \varepsilon,\varepsilon^{-\alpha},\varepsilon^{\mu}\right\} \right)$. This explicitly illustrates the effect of the scaling parameter $\alpha$ on the
rate of the convergence.

For the time being, it only remains to come up with the corrector estimates for the Stokes equation. At this point, we must pay a regularity price\footnote{Compare to the two-scale convergence
	method when deriving the structure of the macroscopic system in \cite{RMK12}.} 
concerning the smoothness of the boundaries to make use of Lemma \ref{lem:ines2}.
With $\partial\Omega\in C^{4}$, we adapt the ideas of \cite{PM96} to  
define the following velocity corrector:
\begin{align}
	\mathcal{V}^{\varepsilon,\delta}\left(t,x\right) & :=-\sum_{j=1}^{d}w_{j}\left(\frac{x}{\varepsilon}\right)\left[\left(c_{0}^{+}-c_{0}^{-}\right)\partial_{x_{j}}\tilde{\Phi}_{0}\left(t,x\right)+\partial_{x_{j}}p_{0}\left(t,x\right)+\left(\mathbb{K}^{-1}\eta^{\delta}\right)_{j}\right]\nonumber \\
	& -\varepsilon\sum_{i,j=1}^{d}r_{ij}\left(\frac{x}{\varepsilon}\right)\left(1-m^{\varepsilon}\right)\partial_{x_{i}}\left(\left(c_{0}^{+}-c_{0}^{-}\right)\partial_{x_{j}}\tilde{\Phi}_{0}\left(t,x\right)+\partial_{x_{j}}p_{0}\left(t,x\right)+\left(\mathbb{K}^{-1}\eta^{\delta}\right)_{j}\right),\label{eq:Unghieng}
\end{align}
and the pressure corrector:
\begin{align}
	\mathcal{P}^{\varepsilon,\delta}\left(t,x\right)&:=p_{0}\left(t,x\right) \nonumber \\&-\varepsilon\sum_{j=1}^{d}\pi_{j}\left(\frac{x}{\varepsilon}\right)\left[\left(c_{0}^{+}-c_{0}^{-}\right)\partial_{x_{j}}\tilde{\Phi}_{0}\left(t,x\right)+\partial_{x_{j}}p_{0}\left(t,x\right)+\left(\mathbb{K}^{-1}\eta^{\delta}\right)_{j}\right],\label{eq:Pnghieng}
\end{align}
where $w_{j}$, $\pi_{j}$ and $r_{ij}$ are solutions of the problems
(\ref{eq:cellproblem1}) and (\ref{eq:lastcellproblem}), respectively,
for $1\le i,j\le d$; and $\eta^{\delta}$ is a function defined in
Lemma \ref{lem:ines2}.

From (\ref{eq:Unghieng}), one can structure the divergence of the
corrector $\mathcal{V}^{\varepsilon,\delta}$. In fact, by definition
of the function $\eta^{\delta}$ and the structure of the macroscopic
system for the velocity in Theorem \ref{thm:macroNeumann}, the divergence
of the first term of vanishes (\ref{eq:Unghieng}) itself. Therefore,
one computes that
\begin{align*}
	\nabla\cdot\mathcal{V}^{\varepsilon,\delta} & =-\sum_{i,j=1}^{d}\left(w_{j}^{i}\left(\frac{x}{\varepsilon}\right)-\left|Y_{l}\right|^{-1}K_{ij}\right)\left(1-m^{\varepsilon}\right)\partial_{x_{i}}\left[\left(c_{0}^{+}-c_{0}^{-}\right)\partial_{x_{j}}\tilde{\Phi}_{0}+\partial_{x_{j}}p_{0}+\left(\mathbb{K}^{-1}\eta^{\delta}\right)_{j}\right]\\
	& -\varepsilon\sum_{i,j=1}^{d}r_{ij}\left(\frac{x}{\varepsilon}\right)\left(1-m^{\varepsilon}\right)\nabla\cdot\left[\partial_{x_{i}}\left(\left(c_{0}^{+}-c_{0}^{-}\right)\partial_{x_{j}}\tilde{\Phi}_{0}+\partial_{x_{j}}p_{0}+\left(\mathbb{K}^{-1}\eta^{\delta}\right)_{j}\right)\right]\\
	& +\varepsilon\sum_{i,j=1}^{d}r_{ij}\left(\frac{x}{\varepsilon}\right)\nabla m^{\varepsilon}\partial_{x_{i}}\left[\left(c_{0}^{+}-c_{0}^{-}\right)\partial_{x_{j}}\tilde{\Phi}_{0}+\partial_{x_{j}}p_{0}+\left(\mathbb{K}^{-1}\eta^{\delta}\right)_{j}\right],
\end{align*}
where we also use the structure of the cell problem (\ref{eq:lastcellproblem}).

Taking into account that
\begin{align*}
	& -\sum_{i,j=1}^{d}K_{ij}\partial_{x_{i}}\left(\left(c_{0}^{\pm}-c_{0}^{-}\right)\partial_{x_{j}}\tilde{\Phi}_{0}+\partial_{x_{j}}p_{0}\right)=0,\\
	& \sum_{i,j=1}^{d}K_{ij}\partial_{x_{i}}\left(\mathbb{K}^{-1}\eta^{\delta}\right)_{j}=0,
\end{align*}
hold (see again the macroscopic system for the velocity in Theorem
\ref{thm:macroNeumann} as well as the properties of $\eta^{\delta}$
in Lemma \ref{lem:ines2}), the estimate for the divergence of $\mathcal{V}^{\varepsilon,\delta}$
in $L^{2}$-norm
\[
\left\Vert \nabla\cdot\mathcal{V}^{\varepsilon,\delta}\right\Vert _{L^{2}\left(\Omega^{\varepsilon}\right)}\le C\left(\varepsilon^{\frac{1}{2}}\delta^{-1}+\varepsilon\delta^{-\frac{3}{2}}+\varepsilon^{\frac{1}{q}}\delta^{-\frac{1}{2}-\frac{1}{q}}\right)\quad\text{for }q\in\left[2,\infty\right],
\]
is directly obtained from Lemma \ref{lem:ines2} and the inequalities
in (\ref{eq:cutin}).

At this stage, if we choose $q=2$ and $\delta\gg\varepsilon$, we
get
\begin{equation}
	\left\Vert \nabla\cdot\mathcal{V}^{\varepsilon,\delta}\right\Vert _{L^{2}\left(\Omega^{\varepsilon}\right)}\le C\left(\varepsilon\delta^{-\frac{3}{2}}+\varepsilon^{\frac{1}{2}}\delta^{-1}\right),\label{eq:3.28-6}
\end{equation}
and hence,
\[
\left\Vert \nabla\cdot\mathcal{V}^{\varepsilon,\delta}\right\Vert _{L^{2}\left(\left(0,T\right)\times\Omega^{\varepsilon}\right)}\le C\left(\varepsilon\delta^{-\frac{3}{2}}+\varepsilon^{\frac{1}{2}}\delta^{-1}\right).
\]

Next, we introduce the following function:
\[
\Psi^{\varepsilon}\left(t,x\right):=\Delta\mathcal{V}^{\varepsilon,\delta}\left(t,x\right)-\varepsilon^{-2}\nabla\mathcal{P}^{\varepsilon,\delta}-\left(c_{0}^{+}\left(t,x\right)-c_{0}^{-}\left(t,x\right)\right)\nabla\tilde{\Phi}_{0}\left(t,x\right).
\]

Thus, for any $\varphi_{1}\in\left[H_{0}^{1}\left(\Omega^{\varepsilon}\right)\right]^{d}$
we have, after direct computations, that
\begin{align} & \left\langle \Psi^{\varepsilon},\varphi_{1}\right\rangle _{\left(\left[H^{1}\right]^{d}\right)',\left[H^{1}\right]^{d}}\\
= & -\sum_{j=1}^{d}\int_{\Omega^{\varepsilon}}\left(\Delta w_{j}\left(\frac{x}{\varepsilon}\right)-\varepsilon^{-1}\nabla\pi_{j}\left(\frac{x}{\varepsilon}\right)\right)\left[\left(c_{0}^{+}-c_{0}^{-}\right)\partial_{x_{j}}\tilde{\Phi}_{0}+\partial_{x_{j}}p_{0}+\left(\mathbb{K}^{-1}\eta^{\delta}\right)_{j}\right]\varphi_{1}dx\nonumber\\
& -\varepsilon^{-2}\int_{\Omega^{\varepsilon}}\left(\nabla p_{0}+\left(c_{0}^{+}-c_{0}^{-}\right)\nabla\tilde{\Phi}_{0}\right)\varphi_{1}dx\nonumber\\
& -\sum_{j=1}^{d}\int_{\Omega^{\varepsilon}}\left(2\nabla w_{j}\left(\frac{x}{\varepsilon}\right)-\varepsilon^{-1}\pi\left(\frac{x}{\varepsilon}\right)\mathbb{I}\right)\nabla\left[\left(c_{0}^{+}-c_{0}^{-}\right)\partial_{x_{j}}\tilde{\Phi}_{0}+\partial_{x_{j}}p_{0}+\left(\mathbb{K}^{-1}\eta^{\delta}\right)_{j}\right]\varphi_{1}dx\nonumber\\
& -\sum_{j=1}^{d}\int_{\Omega^{\varepsilon}}w_{j}\left(\frac{x}{\varepsilon}\right)\Delta\left[\left(c_{0}^{+}-c_{0}^{-}\right)\partial_{x_{j}}\tilde{\Phi}_{0}+\partial_{x_{j}}p_{0}+\left(\mathbb{K}^{-1}\eta^{\delta}\right)_{j}\right]\varphi_{1}dx\nonumber\\
& -\varepsilon\sum_{j=1}^{d}\int_{\Omega^{\varepsilon}}\nabla\left[r_{ij}\left(\frac{x}{\varepsilon}\right)\left(1-m^{\varepsilon}\right)\partial_{x_{j}}\left(\left(c_{0}^{+}-c_{0}^{-}\right)\partial_{x_{j}}\tilde{\Phi}_{0}+\partial_{x_{j}}p_{0}+\left(\mathbb{K}^{-1}\eta^{\delta}\right)_{j}\right)\right]\cdot\nabla\varphi_{1}dx\nonumber\\
& :=\mathcal{I}_{1}+\mathcal{I}_{2}+\mathcal{I}_{3}+\mathcal{I}_{4}+\mathcal{I}_{5}.\label{eq:3.4-6}
\end{align}

Note that $\mathbb{I}$ here stands for the identity matrix. From
now on, to get the estimate for $\Psi^{\varepsilon}$ in $\left(H^{1}\right)'$-norm,
we need bounds on $\mathcal{I}_{i}$ for $1\le i\le5$.
Indeed, with the help of Lemma \ref{lem:ines3}
applied to the test function $\varphi_{1}$, and the estimates of
the involved functions, one immediately obtains from the H\"older's
inequality that
\begin{equation}
	\left|\mathcal{I}_{3}\right|+\left|\mathcal{I}_{4}\right|\le C\left(\delta^{-\frac{1}{2}}+\varepsilon\delta^{-\frac{3}{2}}\right)\left\Vert \nabla\varphi_{1}\right\Vert _{\left[L^{2}\left(\Omega^{\varepsilon}\right)\right]^{d}},\label{eq:3.5-6}
\end{equation}
where we also apply again the estimate of $\eta^{\delta}$ in Lemma
\ref{lem:ines2}.

To estimate  $\mathcal{I}_{5}$, we notice
by
\begin{equation}
	\left|\mathcal{I}_{5}\right|\le C\left(\delta^{-\frac{1}{2}}+\varepsilon\delta^{-\frac{3}{2}}\right)\left\Vert \nabla\varphi_{1}\right\Vert _{\left[L^{2}\left(\Omega^{\varepsilon}\right)\right]^{d}},\label{eq:3.6-6}
\end{equation}
where we also employ the estimates (\ref{eq:cutin}) on $m^{\varepsilon}$.

In addition, we have
\begin{align}\left|\mathcal{I}_{1}+\mathcal{I}_{2}\right| & \le\left|\int_{\Omega^{\varepsilon}}\varepsilon^{-2}\left[-\sum_{j=1}^{d}\left(\left(c_{0}^{+}-c_{0}^{-}\right)\partial_{x_{j}}\tilde{\Phi}_{0}+\partial_{x_{j}}p_{0}+\left(\mathbb{K}^{-1}\eta^{\delta}\right)_{j}\right)\right.\right.\nonumber\\
& +\left.\left.\nabla p_{0}+\left(c_{0}^{+}-c_{0}^{-}\right)\nabla\tilde{\Phi}_{0}\right]\varphi_{1}dx\right|\nonumber\\
& \le C\varepsilon^{-1}\delta^{\frac{1}{2}}\left\Vert \nabla\varphi_{1}\right\Vert _{\left[L^{2}\left(\Omega^{\varepsilon}\right)\right]^{d}}.\label{eq:3.7-6}
\end{align}

Consequently, collecting (\ref{eq:3.4-6})-(\ref{eq:3.7-6}) and according
to the definition of the $\left(H^{1}\right)'$-norm, we arrive at
\begin{align}
	\left\Vert \Psi^{\varepsilon}\right\Vert _{\left(\left[H^{1}\left(\Omega^{\varepsilon}\right)\right]^{d}\right)'} & =\sup_{\varphi_{1}\in\left[H^{1}\left(\Omega^{\varepsilon}\right)\right]^{d},\left\Vert \varphi_{1}\right\Vert _{\left[H^{1}\left(\Omega^{\varepsilon}\right)\right]^{d}}\le1}\left\langle \Psi^{\varepsilon},\varphi_{1}\right\rangle _{\left(\left[H^{1}\right]^{d}\right)',\left[H^{1}\right]^{d}}\nonumber \\
	& \le C\left(\varepsilon^{-1}\delta^{\frac{1}{2}}+\delta^{-\frac{1}{2}}+\varepsilon\delta^{-\frac{3}{2}}\right)\left\Vert \nabla\varphi_{1}\right\Vert _{\left[L^{2}\left(\Omega^{\varepsilon}\right)\right]^{d}}.\label{eq:3.32-6}
\end{align}

Now, we have available a couple of estimates related to the correctors $\mathcal{V}^{\varepsilon,\delta}$
and $\mathcal{P}^{\varepsilon,\delta}$. To go on, we consider
the differences
\[
\mathcal{D}_{1}^{\varepsilon}:=v_{\varepsilon}-\left|Y_{l}\right|^{-1}\mathbb{D}\mathcal{V}^{\varepsilon,\delta},\quad\mathcal{D}_{2}^{\varepsilon}:=p_{\varepsilon}-\left|Y_{l}\right|^{-1}\mathbb{D}\mathcal{P}^{\varepsilon,\delta},
\]
and observe that the equation
\begin{equation}
	-\varepsilon^{2}\Delta\mathcal{D}_{1}^{\varepsilon}+\nabla\mathcal{D}_{2}^{\varepsilon}=\varepsilon^{2}\left[\left|Y_{l}\right|^{-1}\mathbb{D}\Psi^{\varepsilon}-\varepsilon^{-2}\left(\left(c_{\varepsilon}^{+}-c_{\varepsilon}^{-}\right)\nabla\tilde{\Phi}_{\varepsilon}-\left(c_{0}^{+}-c_{0}^{-}\right)\left|Y_{l}\right|^{-1}\mathbb{D}\nabla\tilde{\Phi}_{0}\right)\right]\label{eq:3.39-6-1}
\end{equation}
holds a.e. in $\Omega^{\varepsilon}$.

It remains to estimate the second term on the right-hand side
of the equation (\ref{eq:3.36-6-1}) in $\left(H^{1}\right)'$-norm.
This estimate fully relies on the corrector estimate for the electrostatic
potentials in (\ref{eq:estimate2-6}), the\emph{ }boundedness of concentration
fields in Theorem \ref{thm:positive+bound} with the assumption that
$c_{0}^{\pm}\in W^{1,\infty}\left(\Omega^{\varepsilon}\right)\cap H^{2}\left(\Omega^{\varepsilon}\right)$.
In fact, the estimate resembles very much the one in (\ref{eq:3.20-6}), viz.
\begin{align}\left\langle \left(c_{\varepsilon}^{+}-c_{\varepsilon}^{-}\right)\nabla\tilde{\Phi}_{\varepsilon}\right. & -\left.\left(c_{0}^{+}-c_{0}^{-}\right)\left|Y_{l}\right|^{-1}\mathbb{D}\nabla\tilde{\Phi}_{0},\varphi_{1}\right\rangle _{\left(\left[H^{1}\right]^{d}\right)',\left[H^{1}\right]^{d}} \nonumber \\
& \le C\left\Vert \nabla\tilde{\Phi}_{\varepsilon}-\left|Y_{l}\right|^{-1}\mathbb{D}\nabla\tilde{\Phi}_{0}\right\Vert _{\left[L^{2}\left(\Omega^{\varepsilon}\right)\right]^{d}}\left\Vert \varphi_{1}\right\Vert _{L^{2}\left(\Omega^{\varepsilon}\right)}\nonumber\\
& \le C\max\left\{ \varepsilon^{\frac{3}{2}},\varepsilon^{\frac{\mu}{2}+1}\right\} \left\Vert \nabla\varphi_{1}\right\Vert _{\left[L^{2}\left(\Omega^{\varepsilon}\right)\right]^{d}},\label{eq:3.33-6}
\end{align}
for all $\varphi_{1}\in\left[H_{0}^{1}\left(\Omega^{\varepsilon}\right)\right]^{d}$
and where we also use Lemma \ref{lem:ines3}.

For ease of presentation, we put
\[
\mathcal{L}^{\varepsilon}:=\varepsilon^{-2}\left(\left(c_{\varepsilon}^{+}-c_{\varepsilon}^{-}\right)\nabla\tilde{\Phi}_{\varepsilon}-\left(c_{0}^{+}-c_{0}^{-}\right)\left|Y_{l}\right|^{-1}\mathbb{D}\nabla\tilde{\Phi}_{0}\right).
\]

The corrector for the pressure can be obtained by the use of the following
results which are deduced from \cite{Tartar80} and \cite{PM96}:
\begin{itemize}
	\item there exists an extension $E\left(\mathcal{D}_{2}^{\varepsilon}\right)\in L^{2}\left(\Omega\right)/\mathbb{R}$
	of $\mathcal{D}_{2}^{\varepsilon}$ such that
	\begin{equation}
		\left\Vert E\left(\mathcal{D}_{2}^{\varepsilon}\right)\right\Vert _{L^{2}\left(\Omega\right)/\mathbb{R}}\le C\varepsilon\left(\left\Vert \Psi^{\varepsilon}-\mathcal{L}^{\varepsilon}\right\Vert _{\left(\left[H^{1}\left(\Omega^{\varepsilon}\right)\right]^{d}\right)'}+\left\Vert \nabla\mathcal{D}_{1}^{\varepsilon}\right\Vert _{\left[L^{2}\left(\Omega^{\varepsilon}\right)\right]^{d^{2}}}\right),\label{eq:3.35-6}
	\end{equation}
	\item the following estimates hold:
	\begin{align}
		& \left\Vert \nabla\mathcal{D}_{1}^{\varepsilon}\right\Vert _{\left[L^{2}\left(\Omega^{\varepsilon}\right)\right]^{d^{2}}}\le C\left(\left\Vert \Psi^{\varepsilon}-\mathcal{L}^{\varepsilon}\right\Vert _{\left(\left[H^{1}\left(\Omega^{\varepsilon}\right)\right]^{d}\right)'}+\varepsilon^{-1}\left\Vert \nabla\cdot\mathcal{V}^{\varepsilon,\delta}\right\Vert _{L^{2}\left(\Omega^{\varepsilon}\right)}\right),\label{eq:3.36-6-1}\\
		& \left\Vert \mathcal{D}_{1}^{\varepsilon}\right\Vert _{\left[L^{2}\left(\Omega^{\varepsilon}\right)\right]^{d}}\le C\left(\varepsilon\left\Vert \Psi^{\varepsilon}-\mathcal{L}^{\varepsilon}\right\Vert _{\left(\left[H^{1}\left(\Omega^{\varepsilon}\right)\right]^{d}\right)'}+\left\Vert \nabla\cdot\mathcal{V}^{\varepsilon,\delta}\right\Vert _{L^{2}\left(\Omega^{\varepsilon}\right)}\right).\label{eq:3.36-6}
	\end{align}
\end{itemize}

Collecting (\ref{eq:3.32-6}) and (\ref{eq:3.33-6}),
we get
\begin{equation}
	\left\Vert \Psi^{\varepsilon}-\mathcal{L}^{\varepsilon}\right\Vert _{\left(\left[H^{1}\left(\Omega^{\varepsilon}\right)\right]^{d}\right)'}\le C\left(\varepsilon^{-1}\delta^{\frac{1}{2}}+\delta^{-\frac{1}{2}}+\varepsilon\delta^{-\frac{3}{2}}+\max\left\{ \varepsilon^{-\frac{1}{2}},\varepsilon^{\frac{\mu}{2}-1}\right\} \right)\left\Vert \nabla\varphi_{1}\right\Vert _{\left[L^{2}\left(\Omega^{\varepsilon}\right)\right]^{d}}.\label{eq:3.38-6}
\end{equation}

We thus observe from (\ref{eq:3.36-6}), (\ref{eq:3.28-6}) and (\ref{eq:3.38-6})
that
\[
\left\Vert \mathcal{D}_{1}^{\varepsilon}\right\Vert _{\left[L^{2}\left(\Omega^{\varepsilon}\right)\right]^{d}}\le C\left(\delta^{\frac{1}{2}}+\varepsilon\delta^{-\frac{1}{2}}+\varepsilon^{2}\delta^{-\frac{3}{2}}+\max\left\{ \varepsilon^{\frac{1}{2}},\varepsilon^{\frac{\mu}{2}}\right\} +\varepsilon\delta^{-\frac{3}{2}}+\varepsilon^{\frac{1}{2}}\delta^{-1}\right).
\]

Since $\delta\gg\varepsilon$, we can take $\delta=\varepsilon^{\lambda}$
for $\lambda\in\left(0,1\right)$ to obtain
\begin{align*}
	\left\Vert \mathcal{D}_{1}^{\varepsilon}\right\Vert _{\left[L^{2}\left(\Omega^{\varepsilon}\right)\right]^{d}} & \le C\left(\varepsilon^{\frac{\lambda}{2}}+\varepsilon^{1-\frac{\lambda}{2}}+\varepsilon^{2-\frac{3\lambda}{2}}+\varepsilon^{1-\frac{3\lambda}{2}}+\varepsilon^{\frac{1}{2}-\lambda}+\max\left\{ \varepsilon^{\frac{1}{2}},\varepsilon^{\frac{\mu}{2}}\right\} \right)\\
	& \le C\left(\max\left\{ \varepsilon^{\frac{1}{2}},\varepsilon^{\frac{\mu}{2}}\right\} +\varepsilon^{\frac{\lambda}{2}}+\varepsilon^{1-\frac{3\lambda}{2}}+\varepsilon^{\frac{1}{2}-\lambda}\right).
\end{align*}

On the other hand, the optimal value for $\lambda$ is $1/3$ which
leads to the following estimate:
\begin{equation}
	\left\Vert \mathcal{D}_{1}^{\varepsilon}\right\Vert _{\left[L^{2}\left(\Omega^{\varepsilon}\right)\right]^{d}}\le C\max\left\{ \varepsilon^{\frac{1}{6}},\varepsilon^{\frac{\mu}{2}}\right\} .\label{eq:estimate4-6}
\end{equation}

Hereafter, it follows from (\ref{eq:estimate4-6}), (\ref{eq:3.35-6}),
(\ref{eq:3.36-6-1}) and (\ref{eq:3.38-6}) that
\begin{align*}
	\left\Vert E\left(\mathcal{D}_{2}^{\varepsilon}\right)\right\Vert _{L^{2}\left(\Omega\right)/\mathbb{R}} & \le C\left(\varepsilon\left\Vert \Psi^{\varepsilon}-\mathcal{L}^{\varepsilon}\right\Vert _{\left(\left[H^{1}\left(\Omega^{\varepsilon}\right)\right]^{d}\right)'}+\left\Vert \nabla\cdot\mathcal{V}^{\varepsilon,\delta}\right\Vert _{\left[L^{2}\left(\Omega^{\varepsilon}\right)\right]^{d^{2}}}\right)\\
	& \le C\left(\max\left\{ \varepsilon^{\frac{1}{2}},\varepsilon^{\frac{\mu}{2}}\right\} +\varepsilon^{\frac{\lambda}{2}}+\varepsilon^{1-\frac{3\lambda}{2}}+\varepsilon^{\frac{1}{2}-\lambda}\right).
\end{align*}

This indicates the following estimate:
\begin{equation}
	\left\Vert p_{\varepsilon}-p_{0}\right\Vert _{L^{2}\left(\Omega\right)/\mathbb{R}}\le C\left(\max\left\{ \varepsilon^{\frac{1}{2}},\varepsilon^{\frac{\mu}{2}}\right\} +\varepsilon^{\frac{\lambda}{2}}+\varepsilon^{1-\frac{3\lambda}{2}}+\varepsilon^{\frac{1}{2}-\lambda}\right).\label{eq:estimate5-6}
\end{equation}

Finally, we gather (\ref{eq:estimate1-6}), (\ref{eq:estimate2-6}),
(\ref{eq:estimate3-6}), (\ref{eq:estimate4-6}) and (\ref{eq:estimate5-6})
to conclude the proof of Theorem \ref{thm:main1-6}.

\subsection{Proof of Theorem \ref{thm:main2-6}}

We turn the attention to the Dirichlet boundary condition for the electrostatic potential on the micro-surface. Based on Theorem \ref{thm:macroDirichlet}, we observe that the structure
of the macroscopic systems for the Stokes and Nernst-Planck equations
are the same as the corresponding systems in the Neumann case
(see Theorem \ref{thm:macroNeumann}). Therefore, the corrector estimates
for these systems remain unchanged in Theorem \ref{thm:main1-6}.
Also, some regularity properties are not needed in this case.
We derive first the corrector estimates
for the velocity and pressure and then the corrector estimates of the concentration
fields. Thereby, the corrector for the electrostatic potential can
also be obtained. Here, the macroscopic reconstructions are defined as follows:
\begin{align}
	& v_{0}^{\varepsilon}\left(t,x\right):=v_{0}\left(t,x,\frac{x}{\varepsilon}\right),\label{eq:3.49-6-1}\\
	& v_{1}^{\varepsilon}\left(t,x\right):=v_{1}\left(t,x,\frac{x}{\varepsilon}\right),\label{eq:3.50-6-1}\\
	& c_{0}^{\pm,\varepsilon}\left(t,x\right):=c_{0}^{\pm}\left(t,x\right),\label{eq:3.51-6-1}\\
	& c_{1}^{\pm,\varepsilon}\left(t,x\right):=c_{0}^{\pm,\varepsilon}\left(t,x\right)+\varepsilon\sum_{j=1}^{d}\varphi_{j}\left(\frac{x}{\varepsilon}\right)\partial_{x_{j}}c_{0}^{\pm,\varepsilon}\left(t,x\right).\label{eq:3.52-6-1}
\end{align}

Recall $\tilde{\Phi}_{\varepsilon}:=\varepsilon^{\alpha-2}\Phi_{\varepsilon}^{\hom}$. By Theorem \ref{thm:homoDirichlet}, $\tilde{\Phi}_{\varepsilon}$ obeys the weak formulation
\[
\int_{\Omega^{\varepsilon}}\varepsilon^{2}\nabla\tilde{\Phi}_{\varepsilon}\cdot\nabla\varphi_{2}dx=\int_{\Omega^{\varepsilon}}\left(c_{\varepsilon}^{+}-c_{\varepsilon}^{-}\right)\varphi_{2}dx\quad\text{for all }\varphi_{2}\in H_{0}^{1}\left(\Omega^{\varepsilon}\right).
\]

Therefore, we define the following macroscopic reconstructions:
\begin{align}
	\tilde{\Phi}_{0}^{\varepsilon}\left(t,x\right) & :=\tilde{\Phi}_{0}\left(t,x,\frac{x}{\varepsilon}\right),\label{eq:3.53-6-1}\\
	\overline{\tilde{\Phi}}_{0}^{\varepsilon}\left(t,x\right) & :=\left|Y_{l}\right|^{-1}\overline{\tilde{\Phi}}_{0}\left(t,x\right),\label{eq:3.54-6-1}
\end{align}
and recall that the strong formulation for $\tilde{\Phi}_{0}$
(see \cite[Theorem 4.12]{RMK12}) is given by
\begin{align*}
	& -\Delta_{y}\tilde{\Phi}_{0}\left(t,x,y\right)=c_{0}^{\pm}\left(t,x\right)-c_{0}^{-}\left(t,x\right)\;\text{in }\left(0,T\right)\times\Omega\times Y_{l},\\
	& \tilde{\Phi}_{0}=0\;\text{ in }\left(0,T\right)\times\Omega\times\Gamma.
\end{align*}

Consequently, the difference equation for the Poisson equation can be written
as
\[
-\varepsilon^{2}\Delta\tilde{\Phi}_{\varepsilon}+\left(\Delta_{y}\tilde{\Phi}_{0}\right)^{\varepsilon}=\left(c_{\varepsilon}^{+}-c_{0}^{+}\right)+\left(c_{0}^{-}-c_{\varepsilon}^{-}\right).
\]

Choosing the test function $\varphi_{2}=\tilde{\Phi}_{\varepsilon}-\tilde{\Phi}_{0}^{\varepsilon}$,
let us now estimate the following integral:
\[
\int_{\Omega^{\varepsilon}}\left(\Delta_{y}\tilde{\Phi}_{0}\right)^{\varepsilon}\varphi_{2}dx.
\]

Using the simple relation $\nabla_{y}=\varepsilon\left(\nabla-\nabla_{x}\right)$
and the decomposition
\[
\left(\Delta_{y}\tilde{\Phi}_{0}\right)^{\varepsilon}=\left(1-m^{\varepsilon}\right)\left(\Delta_{y}\tilde{\Phi}_{0}\right)^{\varepsilon}+\varepsilon m^{\varepsilon}\nabla\cdot\left(\nabla_{y}\left(\tilde{\Phi}_{0}\right)^{\varepsilon}\right)-\varepsilon m^{\varepsilon}\left(\nabla_{x}\cdot\left(\nabla_{y}\tilde{\Phi}_{0}\right)\right)^{\varepsilon},
\]
and we obtain, after integrating by parts the term $\nabla\cdot\left(\nabla_{y}\left(\tilde{\Phi}_{0}\right)^{\varepsilon}\right)$,
that
\begin{align}\int_{\Omega^{\varepsilon}}\left(\Delta_{y}\tilde{\Phi}_{0}\right)^{\varepsilon}\varphi_{2}dx & =\int_{\Omega^{\varepsilon}}\left[\left(1-m^{\varepsilon}\right)\left(\Delta_{y}\tilde{\Phi}_{0}\right)^{\varepsilon}\right.\nonumber\\
& -\left.\varepsilon m^{\varepsilon}\left(\nabla_{x}\cdot\left(\nabla_{y}\tilde{\Phi}_{0}\right)\right)^{\varepsilon}-\varepsilon\nabla m^{\varepsilon}\cdot\nabla_{y}\left(\tilde{\Phi}_{0}\right)^{\varepsilon}\right]\varphi_{2}dx\nonumber\\
& +\varepsilon\int_{\Omega^{\varepsilon}}\left(1-m^{\varepsilon}\right)\nabla_{y}\left(\tilde{\Phi}_{0}\right)^{\varepsilon}\cdot\nabla\varphi_{2}dx-\varepsilon\int_{\Omega^{\varepsilon}}\nabla_{y}\left(\tilde{\Phi}_{0}\right)^{\varepsilon}\cdot\nabla\varphi_{2}dx\nonumber\\
& :=\mathcal{F}_{1}+\mathcal{F}_{2}+\mathcal{F}_{3}.\label{eq:3.49-6}
\end{align}

The first and second integrals on the right-hand side of (\ref{eq:3.49-6})
can be estimated by
\begin{align*}\left|\mathcal{F}_{1}\right|+\left|\mathcal{F}_{2}\right| & \le C\left(\left\Vert 1-m^{\varepsilon}\right\Vert _{L^{2}\left(\Omega^{\varepsilon}\right)}\left\Vert \Delta_{y}\tilde{\Phi}_{0}\right\Vert _{L^{\infty}\left(\Omega^{\varepsilon};C\left(Y_{l}\right)\right)}\right.\\
& +\left.\varepsilon\left\Vert \nabla_{x}\cdot\left(\nabla_{y}\tilde{\Phi}_{0}\right)\right\Vert _{L^{2}\left(\Omega^{\varepsilon};C\left(Y_{l}\right)\right)}\right)\left\Vert \varphi_{2}\right\Vert _{L^{2}\left(\Omega^{\varepsilon}\right)}\\
& +C\varepsilon\left\Vert \nabla m^{\varepsilon}\right\Vert _{L^{2}\left(\Omega^{\varepsilon}\right)}\left\Vert \nabla_{y}\tilde{\Phi}_{0}\right\Vert _{L^{\infty}\left(\Omega^{\varepsilon};C\left(Y_{l}\right)\right)}\left\Vert \varphi_{2}\right\Vert _{L^{2}\left(\Omega^{\varepsilon}\right)}\\
& +C\varepsilon\left\Vert 1-m^{\varepsilon}\right\Vert _{L^{2}\left(\Omega^{\varepsilon}\right)}\left\Vert \nabla_{y}\tilde{\Phi}_{0}\right\Vert _{L^{\infty}\left(\Omega^{\varepsilon};C\left(Y_{l}\right)\right)}\left\Vert \nabla\varphi_{2}\right\Vert _{L^{2}\left(\Omega^{\varepsilon}\right)},
\end{align*}
where we assume that $\tilde{\Phi}_{0}\in L^{\infty}\left(\Omega^{\varepsilon};W^{2+s,2}\left(Y_{l}\right)\right)\cap H^{1}\left(\Omega^{\varepsilon};W^{1+s,2}\left(Y_{l}\right)\right)$
and make use of the compact embeddings $W^{2+s,2}\left(Y_{l}\right)\subset C^{2}\left(Y_{l}\right)$,
$W^{1+s,2}\left(Y_{l}\right)\subset C^{1}\left(Y_{l}\right)$ for
$s>d/2$.
Applying the inequalities (\ref{eq:cutin}), we thus have
\begin{equation}
	\left|\mathcal{F}_{1}\right|+\left|\mathcal{F}_{2}\right|\le C\left(\varepsilon+\varepsilon^{\frac{1}{2}}\right)\left\Vert \varphi_{2}\right\Vert _{L^{2}\left(\Omega^{\varepsilon}\right)}+C\varepsilon^{\frac{3}{2}}\left\Vert \nabla\varphi_{2}\right\Vert _{L^{2}\left(\Omega^{\varepsilon}\right)}.\label{eq:3.50-6}
\end{equation}
It now remains to estimate the following integral:
\[
\int_{\Omega^{\varepsilon}}\varepsilon^{2}\nabla\tilde{\Phi}_{\varepsilon}\cdot\nabla\varphi_{2}dx=\int_{\Omega^{\varepsilon}}\varepsilon\nabla\tilde{\Phi}_{\varepsilon}\cdot\varepsilon\nabla\left(\tilde{\Phi}_{\varepsilon}-\tilde{\Phi}_{0}^{\varepsilon}\right)dx.
\]
Its right-hand side can be estimated by
\begin{equation}
	\int_{\Omega^{\varepsilon}}\varepsilon\nabla\tilde{\Phi}_{\varepsilon}\cdot\varepsilon\nabla\left(\tilde{\Phi}_{\varepsilon}-\tilde{\Phi}_{0}^{\varepsilon}\right)dx\le C\varepsilon\left\Vert \nabla\left(\tilde{\Phi}_{\varepsilon}-\tilde{\Phi}_{0}^{\varepsilon}\right)\right\Vert _{\left[L^{2}\left(\Omega^{\varepsilon}\right)\right]^{d}},\label{eq:3.51-6}
\end{equation}
where we use the fact that $\varepsilon\left\Vert \nabla\tilde{\Phi}_{\varepsilon}\right\Vert _{L^{2}\left(\Omega^{\varepsilon}\right)}\le C$
in Theorem \ref{thm:aprioriestimate-D}.

Based on the corrector estimates for the concentration fields
$c_{\varepsilon}^{\pm}$, we see that
\begin{equation}
	\int_{\Omega^{\varepsilon}}\left[\left(c_{\varepsilon}^{+}-c_{0}^{+}\right)+\left(c_{0}^{-}-c_{\varepsilon}^{-}\right)\right]\varphi_{2}dx\le C\left\Vert c_{\varepsilon}^{\pm}-c_{0}^{\pm}\right\Vert _{L^{2}\left(\Omega^{\varepsilon}\right)}\left\Vert \tilde{\Phi}_{\varepsilon}-\tilde{\Phi}_{0}^{\varepsilon}\right\Vert _{L^{2}\left(\Omega^{\varepsilon}\right)}.\label{eq:3.52-6}
\end{equation}

Setting
\begin{align*}
	w_{1}\left(t\right) & :=\left\Vert \tilde{\Phi}_{\varepsilon}\left(t\right)-\tilde{\Phi}_{0}^{\varepsilon}\left(t\right)\right\Vert _{L^{2}\left(\Omega^{\varepsilon}\right)}^{2}+\left\Vert c_{\varepsilon}^{\pm}\left(t\right)-c_{0}^{\pm}\left(t\right)\right\Vert _{L^{2}\left(\Omega^{\varepsilon}\right)}^{2},\\
	w_{2}\left(t\right) & :=\left\Vert \nabla\left(\tilde{\Phi}_{\varepsilon}-\tilde{\Phi}_{0}^{\varepsilon}\right)\left(t\right)\right\Vert _{\left[L^{2}\left(\Omega^{\varepsilon}\right)\right]^{d}}^{2}+\left\Vert \nabla\left(c_{\varepsilon}^{\pm}-c_{1}^{\pm,\varepsilon}\right)\left(t\right)\right\Vert _{\left[L^{2}\left(\Omega^{\varepsilon}\right)\right]^{d}}^{2},\\
	w_{0} & :=\left\Vert c_{\varepsilon}^{\pm,0}-c_{0}^{\pm,0}\right\Vert _{L^{2}\left(\Omega^{\varepsilon}\right)}^{2},
\end{align*}
the combination of the estimates (\ref{eq:3.50-6})-(\ref{eq:3.52-6})
with the respective estimates for the concentration fields (which are similar
to the Neumann case) and the application of suitable H\"older-like inequalities
give
\[
w_{1}\left(t\right)+\int_{0}^{t}w_{2}\left(s\right)ds\le C\left(\varepsilon+\left(1+\varepsilon\right)w_{0}+\int_{0}^{t}w_{1}\left(s\right)ds\right).
\]

Using Gronwall's inequality yields
\[
w_{1}\left(t\right)+\int_{0}^{t}w_{2}\left(s\right)ds\le C\left(\varepsilon+\left(1+\varepsilon\right)w_{0}\right).
\]

As a consequence, we obtain
\begin{align*}
	& \left\Vert \tilde{\Phi}_{\varepsilon}-\tilde{\Phi}_{0}^{\varepsilon}\right\Vert _{L^{2}\left(\left(0,T\right)\times\Omega^{\varepsilon}\right)}+\left\Vert \nabla\left(\tilde{\Phi}_{\varepsilon}-\tilde{\Phi}_{0}^{\varepsilon}\right)\right\Vert _{\left[L^{2}\left(\left(0,T\right)\times\Omega^{\varepsilon}\right)\right]^{d}}\\
	& +\left\Vert c_{\varepsilon}^{\pm}-c_{0}^{\pm,\varepsilon}\right\Vert _{L^{2}\left(\left(0,T\right)\times\Omega^{\varepsilon}\right)}+\left\Vert \nabla\left(c_{\varepsilon}^{\pm}-c_{1}^{\pm,\varepsilon}\right)\right\Vert _{\left[L^{2}\left(\left(0,T\right)\times\Omega^{\varepsilon}\right)\right]^{d}}\le C\max\left\{ \varepsilon^{\frac{1}{2}},\varepsilon^{\frac{\mu}{2}}\right\} \;\text{for }\mu\in\mathbb{R}_{+},
\end{align*}
where we have used \eqref{initialguess-6}.

Finally, we apply Lemma \ref{lem:ines1} to get
\begin{align*}
\left\Vert \tilde{\Phi}_{\varepsilon}-\overline{\tilde{\Phi}}_{0}^{\varepsilon}\right\Vert _{L^{2}\left(\left(0,T\right)\times\Omega^{\varepsilon}\right)} & \le\left\Vert \tilde{\Phi}_{\varepsilon}-\tilde{\Phi}_{0}^{\varepsilon}\right\Vert _{L^{2}\left(\left(0,T\right)\times\Omega^{\varepsilon}\right)}+\left\Vert \tilde{\Phi}_{0}^{\varepsilon}-\overline{\tilde{\Phi}}_{0}^{\varepsilon}\right\Vert _{L^{2}\left(\left(0,T\right)\times\Omega^{\varepsilon}\right)}\\
& \le C\max\left\{ \varepsilon^{\frac{1}{2}},\varepsilon^{\frac{\mu}{2}}\right\} .
\end{align*}

This completes the proof of Theorem \ref{thm:main2-6}.




\section{Conclusions}\label{finalsec}
         
In \cite{RMK12}, the two-scale convergence method has discovered  possible macroscopic
structures of a non-stationary SNPP model coupled with various scaling
factors and different boundary conditions. In this paper, we have justified such
homogenization limits by deriving several corrector estimates (cf. Theorem
\ref{thm:main1-6} and Theorem \ref{thm:main2-6}). The techniques
we have presented here are mainly based on the construction of suitable macroscopic reconstructions and on a number of energy-like estimates. The employed methodology is applicable to more complex scenarios, where coupled systems of partial differential equations posed in perforated media are involved.
         
\bibliography{mybib}{}
\bibliographystyle{plain}

\end{document}